\font \sevenrm=cmr7
\font \fiverm=cmr5
 \newcommand{\nc}{\newcommand}
\newtheorem{thm}{Theorem}
\newtheorem{cor}[thm]{Corollary}
\newtheorem{lem}[thm]{Lemma}
\newtheorem{prop}[thm]{Proposition}
\newtheorem{defn}{Definition}
\newtheorem{rmk}[thm]{Remark}
\nc{\comment}[1]{[[{\tt #1}]] }
\nc{\Cal}[1]{{\mathcal {#1}}}
\nc{\mop}[1]{\mathop{\hbox {\rm #1} }\nolimits}
\nc{\gmop}[1]{\mathop{\hbox {\bf #1} }\nolimits}
\nc{\smop}[1]{\mathop{\hbox {\sevenrm #1} }\nolimits}
\nc{\ssmop}[1]{\mathop{\hbox {\fiverm #1} }\nolimits}
\nc{\mopl}[1]{\mathop{\hbox {\rm #1} }\limits}
\nc{\smopl}[1]{\mathop{\hbox {\sevenrm #1} }\limits}
\nc{\ssmopl}[1]{\mathop{\hbox {\fiverm #1} }\limits}
\nc{\frakg}{{\frak g}}
\nc{\g}[1]{{\frak {#1}}}
\def \restr#1{\mathstrut_{\textstyle |}\raise-6pt\hbox{$\scriptstyle #1$}}
\def \srestr#1{\mathstrut_{\scriptstyle |}\hbox to
  -1.5pt{}\raise-4pt\hbox{$\scriptscriptstyle #1$}}
\nc{\wt}{\widetilde} \nc{\wh}{\widehat}
\nc{\redtext}[1]{\textcolor{red}{\tt[[#1]]}}
\nc{\bluetext}[1]{\textcolor{blue}{#1}}
\nc\fleche[1]{\mathop{\hbox to #1 mm{\rightarrowfill}}\limits}
\nc{\ignore}[1]{}
\def\semi{\mathrel{\times}\kern -.85pt\joinrel\mathrel{\raise
    1.4pt\hbox{${\scriptscriptstyle |}$}}}
\nc\R{{\mathbb R}}
\nc\N{{\mathbb N}}
\nc\T{{\mathbb T}}
\nc\CC{{\mathbb C}}
\nc\inver{^{-1}}
\nc\point{\hbox{\bf .}}
\nc\un{\hbox{\bf 1}}
\def\diagramme #1{\vskip 4mm \centerline {#1} \vskip 4mm}
\def\fleche#1{\mathop{\hbox to #1 mm{\rightarrowfill}}\limits}
\def\gfleche#1{\mathop{\hbox to #1 mm{\leftarrowfill}}\limits}
\def\inj#1{\mathop{\hbox to #1 mm{$\lhook\joinrel$\rightarrowfill}}\limits}
\def\ginj#1{\mathop{\hbox to #1 mm{\leftarrowfill$\joinrel\rhook$}}\limits}
\def\surj#1{\mathop{\hbox to #1 mm{\rightarrowfill\hskip 2pt\llap{$\rightarrow$}}}\limits}
\def\gsurj#1{\mathop{\hbox to #1 mm{\rlap{$\leftarrow$}\hskip 2pt \leftarrowfill}}\limits}
\def\racine{{\scalebox{0.3}{ 
\begin{picture}(12,12)(38,-38)
\SetWidth{0.5} \SetColor{Black} \Vertex(45,-33){5.66}
\end{picture}}}}
 \def\arbrea{\,{\scalebox{0.15}{ 
  \begin{picture}(8,55) (370,-248)
    \SetWidth{2}
    \SetColor{Black}
    \Line(374,-244)(374,-200)
    \Vertex(374,-197){9}
    \Vertex(375,-245){12}
  \end{picture}
}}\,}
 \def\arbreba{\,{\scalebox{0.15}{ 
\begin{picture}(8,106) (370,-197)
    \SetWidth{2}
    \SetColor{Black}
    \Line(374,-193)(374,-149)
    \Vertex(374,-146){9}
    \Vertex(375,-194){12}
    \Line(374,-142)(374,-98)
    \Vertex(374,-95){9}
  \end{picture}
}}\,}
 \def\arbrebb{\,{\scalebox{0.15}{ 
  \begin{picture}(48,48) (349,-255)
    \SetWidth{2}
    \SetColor{Black}
    \Vertex(375,-252){12}
    \Line(376,-250)(395,-215)
    \Line(373,-251)(354,-214)
    \Vertex(353,-211){9}
    \Vertex(395,-213){9}
  \end{picture}
}}}
\def\arbreca{\,{\scalebox{0.15}{
\begin{picture}(8,156) (370,-147)
    \SetWidth{2}
    \SetColor{Black}
    \Line(374,-143)(374,-99)
    \Vertex(374,-96){9}
    \Vertex(375,-144){12}
    \Line(374,-92)(374,-48)
    \Vertex(374,-45){9}
    \Line(374,-42)(374,2)
    \Vertex(374,5){9}
  \end{picture}
}}\,}
\def\arbrecb{\,{\scalebox{0.15}{
\begin{picture}(48,94) (349,-255)
\SetWidth{2}
\SetColor{Black}
\Line(376,-204)(395,-169)
\Line(373,-205)(354,-168)
\Vertex(353,-165){9}
\Vertex(395,-167){9}
\Vertex(374,-205){9}
\Line(374,-246)(374,-209)
\Vertex(374,-252){12}
\end{picture}}}\,}
\def\arbrecc{\,{\scalebox{0.15}{
 \begin{picture}(48,98) (349,-205)
    \SetWidth{2}
    \SetColor{Black}
    \Vertex(375,-202){12}
    \Line(376,-200)(395,-165)
    \Line(373,-201)(354,-164)
    \Vertex(353,-161){9}
    \Vertex(395,-163){9}
    \Line(353,-160)(353,-113)
    \Vertex(353,-111){9}
  \end{picture}
}}\,}
\def\arbrecd{\,{\scalebox{0.15}{
\begin{picture}(48,52) (349,-251)
    \SetWidth{2}
    \SetColor{Black}
    \Vertex(375,-248){12}
    \Line(376,-246)(395,-211)
    \Line(373,-247)(354,-210)
    \Vertex(353,-207){9}
    \Vertex(395,-209){9}
    \Line(375,-247)(375,-206)
    \Vertex(376,-203){9}
  \end{picture}
 }}\,}
\def\arbreda{\,{\scalebox{0.15}{
\begin{picture}(8,204) (370,-99)
    \SetWidth{2}
    \SetColor{Black}
    \Line(374,-95)(374,-51)
    \Vertex(374,-48){9}
    \Vertex(375,-96){12}
    \Line(374,-44)(374,0)
    \Vertex(374,3){9}
    \Line(374,6)(374,50)
    \Vertex(374,53){9}
    \Line(374,53)(374,98)
    \Vertex(374,101){9}
  \end{picture}
}}\,}
\def\arbredb{\,{\scalebox{0.15}{
\begin{picture}(48,135) (349,-255)
    \SetWidth{2}
    \SetColor{Black}
    \Line(376,-163)(395,-128)
    \Line(373,-164)(354,-127)
    \Vertex(353,-124){9}
    \Vertex(395,-126){9}
    \Vertex(374,-164){9}
    \Line(374,-205)(374,-168)
    \Vertex(374,-207){9}
    \Line(374,-248)(374,-211)
    \Vertex(374,-252){12}
  \end{picture}
}}\,}
\def\arbredc{\,{\scalebox{0.15}{
 \begin{picture}(48,150) (349,-205)
    \SetWidth{2}
    \SetColor{Black}
    \Line(376,-148)(395,-113)
    \Line(373,-149)(354,-112)
    \Vertex(353,-109){9}
    \Vertex(395,-111){9}
    \Line(353,-108)(353,-61)
    \Vertex(353,-59){9}
    \Line(374,-200)(374,-153)
    \Vertex(374,-149){9}
    \Vertex(374,-202){12}
  \end{picture}
}}\,}
\def\arbredd{\,{\scalebox{0.15}{
 \begin{picture}(48,99) (349,-251)
    \SetWidth{2}
    \SetColor{Black}
    \Line(376,-199)(395,-164)
    \Line(373,-200)(354,-163)
    \Vertex(353,-160){9}
    \Vertex(395,-162){9}
    \Vertex(376,-156){9}
    \Vertex(376,-248){12}
    \Line(375,-245)(375,-204)
    \Line(375,-200)(375,-159)
    \Vertex(375,-201){9}
  \end{picture}
}}\,}
\def\arbrede{\,{\scalebox{0.15}{
 \begin{picture}(48,153) (349,-150)
    \SetWidth{2}
    \SetColor{Black}
    \Vertex(375,-147){12}
    \Line(376,-145)(395,-110)
    \Line(373,-146)(354,-109)
    \Vertex(353,-106){9}
    \Vertex(395,-108){9}
    \Line(353,-105)(353,-58)
    \Vertex(353,-56){9}
    \Line(353,-52)(353,-5)
    \Vertex(353,-1){9}
  \end{picture}
}}\,}
\def\arbredf{\,{\scalebox{0.15}{
\begin{picture}(48,98) (349,-205)
    \SetWidth{2}
    \SetColor{Black}
    \Vertex(375,-202){12}
    \Line(376,-200)(395,-165)
    \Line(373,-201)(354,-164)
    \Vertex(353,-161){9}
    \Vertex(395,-163){9}
    \Line(353,-160)(353,-113)
    \Vertex(353,-111){9}
    \Line(395,-159)(395,-112)
    \Vertex(395,-111){9}
  \end{picture}
}}\,}
\def\arbredz{\,{\scalebox{0.15}{
  \begin{picture}(68,88) (329,-215)
    \SetWidth{2}
    \SetColor{Black}
    \Vertex(375,-212){12}
    \Line(376,-210)(395,-175)
    \Line(373,-211)(354,-174)
    \Vertex(353,-171){9}
    \Vertex(395,-173){9}
    \Line(351,-168)(332,-131)
    \Line(355,-168)(374,-133)
    \Vertex(333,-131){9}
    \Vertex(374,-131){9}
  \end{picture}
}}\,}
\def\arbredg{\,{\scalebox{0.15}{
\begin{picture}(48,98) (349,-205)
    \SetWidth{2}
    \SetColor{Black}
    \Vertex(375,-202){12}
    \Line(376,-200)(395,-165)
    \Line(373,-201)(354,-164)
    \Vertex(353,-161){9}
    \Vertex(395,-163){9}
    \Line(375,-201)(375,-160)
    \Vertex(376,-157){9}
    \Vertex(376,-111){9}
    \Line(375,-155)(375,-114)
  \end{picture}
}}\,}
\def\arbredh{\,{\scalebox{0.15}{
 \begin{picture}(90,46) (330,-257)
    \SetWidth{2}
    \SetColor{Black}
    \Vertex(375,-254){12}
    \Line(376,-252)(395,-217)
    \Vertex(395,-215){9}
    \Line(374,-254)(335,-226)
    \Vertex(334,-224){9}
    \Line(375,-252)(356,-215)
    \Vertex(355,-215){9}
    \Line(374,-255)(417,-227)
    \Vertex(418,-225){9}
  \end{picture}
}}\,}
\def\operade{\,{\scalebox{0.6}{  
  \begin{picture}(148,136) (9,-15)
    \SetWidth{1}
    \SetColor{Black}
    \GBox(35,71)(103,89){0.882}
    \GBox(9,27)(138,46){0.882}
    \Line(60,120)(60,89)
    \Line(44,120)(44,89)
    \Line(91,120)(91,89)
    \Line(77,120)(77,89)
    \Line(16,46)(16,120)
    \Line(113,46)(113,120)
    \Line(127,46)(127,120)
    \Line(67,70)(67,46)
    \Line(73,27)(73,2)
    \Text(63,75)[lb]{\Large{\Black{$b$}}}
    \Text(68,31)[lb]{\Large{\Black{$a$}}}
    \Text(-5,-15)[lb]{\Large{\Black{Partial composition $a\circ_i b$}}}
    \Text(18,49)[lb]{\large{\Black{$1$}}}
    \Text(20,61)[lb]{\large{\Black{$\cdots$}}}
    \Text(96,61)[lb]{\large{\Black{$\cdots$}}}
    \Text(71,49)[lb]{\large{\Black{$i$}}}
    \Text(130,49)[lb]{\large{\Black{$k$}}}
  \end{picture}
}}\,}
\def\asoperade{\,{\scalebox{0.6}{  
\begin{picture}(585,186) (104,-20)
    \SetWidth{1}
    \SetColor{Black}
    \GBox(165,109)(227,126){0.882}
    \GBox(134,65)(253,83){0.882}
    \GBox(104,17)(319,36){0.882}
    \Line(173,156)(173,126)
    \Line(182,156)(182,126)
    \Line(192,156)(192,126)
    \Line(219,156)(219,126)
    \Line(138,156)(138,84)
    \Line(237,156)(237,84)
    \Line(248,156)(248,84)
    \Line(108,155)(108,36)
    \Line(262,154)(262,35)
    \Line(305,156)(305,37)
    \Line(194,109)(194,83)
    \Line(194,64)(194,36)
    \Line(206,17)(206,-3)
    \GBox(474,18)(689,37){0.882}
    \GBox(165,109)(227,126){0.882}
    \GBox(165,109)(227,126){0.882}
    \GBox(492,64)(554,81){0.882}
    \GBox(613,64)(675,81){0.882}
    \Line(502,112)(502,82)
    \Line(516,112)(516,82)
    \Line(544,112)(544,82)
    \Line(630,111)(630,81)
    \Line(641,111)(641,81)
    \Line(667,112)(667,82)
    \Line(621,111)(621,81)
    \Line(522,63)(522,37)
    \Line(644,64)(644,36)
    \Line(581,18)(581,-1)
    \Line(478,112)(478,38)
    \Line(562,111)(562,37)
    \Line(606,111)(606,37)
    \Line(685,112)(685,38)
    \Text(202,24)[lb]{\Large{\Black{$a$}}}
    \Text(190,70)[lb]{\Large{\Black{$b$}}}
    \Text(192,115)[lb]{\Large{\Black{$c$}}}
    \Text(578,24)[lb]{\Large{\Black{$a$}}}
    \Text(520,69)[lb]{\Large{\Black{$b$}}}
    \Text(641,69)[lb]{\Large{\Black{$c$}}}
    \Text(145,-20)[lb]{\Large{\Black{Nested associativity}}}
    \Text(525,-16)[lb]{\Large{\Black{Disjoint associativity}}}
    \Text(112,43)[lb]{\large{\Black{$1$}}}
    \Text(197,44)[lb]{\large{\Black{$i$}}}
    \Text(308,44)[lb]{\large{\Black{$k$}}}
    \Text(111,67)[lb]{\large{\Black{$\cdots$}}}
    \Text(141,91)[lb]{\large{\Black{$1$}}}
    \Text(197,90)[lb]{\large{\Black{$j$}}}
    \Text(251,90)[lb]{\large{\Black{$l$}}}
    \Text(275,67)[lb]{\large{\Black{$\cdots$}}}
    \Text(482,44)[lb]{\large{\Black{$1$}}}
    \Text(526,45)[lb]{\large{\Black{$i$}}}
    \Text(650,44)[lb]{\large{\Black{$j$}}}
    \Text(505,88)[lb]{\large{\Black{$1$}}}
    \Text(547,88)[lb]{\large{\Black{$l$}}}
    \Text(519,104)[lb]{\large{\Black{$\cdots$}}}
    \Text(644,104)[lb]{\large{\Black{$\cdots$}}}
    \Text(194,150)[lb]{\large{\Black{$\cdots$}}}
    \Text(575,76)[lb]{\large{\Black{$\cdots$}}}
  \end{picture}
}}\,}
\def\gcoperade{\,{\scalebox{0.6}{
\begin{picture}(500,181) (102,-149)
    \SetWidth{1}
    \SetColor{Black}
    \GBox(102,-100)(602,-72){0.882}
    \GBox(132,-29)(198,-10){0.882}
    \GBox(230,-29)(313,-10){0.882}
    \GBox(395,-29)(550,-10){0.882}
    \Line(141,31)(141,-10)
    \Line(157,31)(157,-10)
    \Line(181,31)(181,-10)
    \Line(241,31)(241,-10)
    \Line(259,31)(259,-10)
    \Line(279,31)(279,-10)
    \Line(300,31)(300,-10)
    \Line(405,31)(405,-10)
    \Line(421,30)(421,-10)
    \Line(448,29)(448,-10)
    \Line(486,31)(486,-10)
    \Line(534,31)(534,-10)
    \Text(338,-91)[lb]{\Large{\Black{$a$}}}
    \Text(159,-24)[lb]{\Large{\Black{$b_1$}}}
    \Text(266,-24)[lb]{\Large{\Black{$b_2$}}}
    \Text(463,-24)[lb]{\Large{\Black{$b_n$}}}
    \Line(166,-29)(166,-72)
    \Line(270,-29)(270,-72)
    \Line(471,-29)(471,-72)
    \Text(335,-20)[lb]{\Large{\Black{$\cdots$}}}
    \Line(344,-100)(344,-123)
    \Text(209,-149)[lb]{\Large{\Black{Global composition $\gamma(a;b_1,b_2,\ldots,b_n)$}}}
  \end{picture}
}}\,}
\begin{document}

\title[Algebraic background]{Algebraic background for numerical methods, control theory and renormalization}
         
\author{Dominique Manchon}
\address{Universit\'e Blaise Pascal,
         C.N.R.S.-UMR 6620, BP 80026,
         63171 Aubi\`ere, France}       
         \email{manchon@math.univ-bpclermont.fr}
         \urladdr{http://math.univ-bpclermont.fr/~manchon/}

\date{August 10th  2012}
\begin{abstract}
We review some important algebraic structures which appear in a priori remote areas of
Mathematics, such as control theory, numerical methods for solving
differential equations, and renormalization in Quantum Field Theory. Starting
with connected Hopf algebras we will also introduce augmented operads, and
devote a substantial part of this chapter to pre-Lie algebras. Other related
algebraic structures (Rota-Baxter and dendriform algebras, NAP algebras) will
be also mentioned.
\end{abstract}

\maketitle


\tableofcontents


\section{Introduction}
\label{sect:intro}
It is known since the pioneering work of A. Cayley in the Nineteenth century
\cite{Cay} that rooted trees and vector fields on the affine space are closely
related. Surprisingly enough, rooted trees also revealed to be a fundamental
tool for studying not only the integral curves of vector fields, but also their
Runge-Kutta numerical approximations \cite{B63}.\\

The rich algebraic structure
of the $k$-vector space $\Cal T$ spanned by rooted trees (where $k$ is some field
of characteristic zero) can be, in a nutshell, described as follows: $\Cal T$ is
both the free pre-Lie algebra with one generator and the free Non-Associative
Permutative algebra with one generator \cite{ChaLiv}, \cite{DL}, and moreover
there are two other pre-Lie structures on $\Cal T$, of operadic nature, which
show strong compatibility with the first pre-Lie (resp. the NAP)
structure (\cite{ChaLiv}, \cite{CEM}, \cite{MS}). The Hopf algebra of
coordinates on the Butcher group of \cite{B63}, i.e. the graded dual of the enveloping algebra $\Cal U(\Cal T)$ (with
respect to the Lie bracket given by the the first pre-Lie structure) was first investigated in
\cite {D}, and intensively studied by D. Kreimer for renormalization purposes
in Quantum Field Theory (\cite{CK1}, \cite{K}, see also \cite{Br}).\\

This chapter is organized as follows: the first section is devoted to general
connected graded or filtered Hopf algebras, including renormalization of their characters. The
second section gives a short presentation of operads in the symmetric monoidal
category of vector spaces, and the third section will treat pre-Lie algebras
in some detail: in particular we will give a ``pedestrian'' proof of the
Chapoton-Livernet theorem on free pre-Lie algebras. In
the last section Rota-Baxter, dendriform and NAP algebras will be introduced.\\

\noindent\textbf{Acknowledgements}: We thank the referees for their careful reading and their suggestions which greatly helped to improve the final version. We also thank Nicol\'as Andruskiewitsch for illuminating discussions on Hopf algebra filtrations.
\section{Hopf algebras: general properties}
We choose a base field $k$ of characteristic zero. Most of the material here
is borrowed from \cite{Man}, to which we refer for more details.
\subsection{Algebras}
A $k$-algebra is by definition a $k$-vector space $A$ together with a bilinear map $m:A\otimes A\to A$ which is {\sl associative\/}. The associativity is expressed by the commutativity of the following diagram:
\diagramme{
\xymatrix{
A\otimes A\otimes A \ar[d]^{I\otimes m}\ar[r]^{m\otimes I}
	&A\otimes A \ar[d]^m	\\
A\otimes A \ar[r]^m	& A}
}
The algebra $A$ is {\sl unital\/} if moreover there is a unit $\un$ in it. This is expressed by the commutativity of the following diagram:
\diagramme{
\xymatrix{
k\otimes A \ar[r]^{u\otimes I} \ar[dr]^\sim	& A\otimes A \
 \ar[d]_m	&A\otimes k \ar[l]_{I\otimes u}\ar[dl]_\sim	\\
&A&}
}
where $u$ is the map from $k$ to $A$ defined by $u(\lambda)=\lambda\un$. The algebra $A$ is {\sl commutative\/} if $m\circ \tau=m$, where $\tau:A\otimes A\to A\otimes A$ is the {\sl flip\/}, defined by $\tau(a\otimes b)=b\otimes a$.\\

A subspace $J\subset A$ is called a {\sl subalgebra\/} (resp. a {\sl left ideal, right ideal, two-sided ideal\/}) of $A$ if $m(J\otimes J)$ (resp. $m(A\otimes J)$, $m(J\otimes A)$, $m(J\otimes A+A\otimes J$) is included in $J$.\\

To any vector space $V$ we can associate its {\sl tensor algebra\/} $T(V)$. As a vector space it is defined by:
$$T(V)=\bigoplus_{k\ge 0}V^{\otimes k},$$
with $V^{\otimes 0}=k$ and $V^{\otimes k+1}:=V\otimes V^{\otimes k}.$ The product is given by the {\sl concatenation\/}:
$$m(v_1\otimes\cdots\otimes v_p,\, v_{p+1}\otimes\cdots\otimes v_{p+q})=
v_1\otimes\cdots\otimes v_{p+q}.$$
The embedding of $k=V^{\otimes 0}$ into $T(V)$ gives the unit map $u$. Tensor
algebra $T(V)$ is also called the {\sl free (unital) algebra generated by
  $V$\/}. This algebra is characterized by the following universal property:
for any linear map $\varphi$ from $V$ to a unital algebra $A$ there is a
unique unital algebra morphism $\wt\varphi$ from $T(V)$ to $A$ extending $\varphi$.
\\

Let $A$ and $B$ be unital $k$-algebras. We put a unital algebra structure on $A\otimes B$ in the following way:
$$(a_1\otimes b_1).(a_2\otimes b_2)=a_1a_2\otimes b_1b_2.$$
The unit element $\un_{A\otimes B}$ is given by $\un_A\otimes \un_B$, and the
associativity is clear. This multiplication is thus given by:
$$m_{A\otimes B}=(m_A\otimes m_B)\circ \tau_{23},$$
where $\tau_{23}: A\otimes B\otimes A\otimes B\to A\otimes A\otimes B\otimes B$ is defined by the flip of the two middle factors:
$$\tau_{23}(a_1\otimes b_1\otimes a_2\otimes b_2)=a_1\otimes a_2\otimes b_1\otimes b_2.$$
\subsection{Coalgebras}
Coalgebras are objects wich are somehow dual to algebras: axioms for
coalgebras are derived from axioms for algebras by reversing the arrows of the
corresponding diagrams:\\

A $k$-coalgebra is by definition a $k$-vector space $C$ together with a bilinear map $\Delta:C\to C\otimes C$ which is {\sl co-associative\/}. The co-associativity is expressed by the commutativity of the following diagram:
\diagramme{
\xymatrix{
C\otimes C\otimes C 
	&C\otimes C \ar[l]_{\Delta\otimes I}	\\
C\otimes C \ar[u]_{I\otimes \Delta}	& C\ar[l]_{\Delta}\ar[u]_{\Delta}}
}
Coalgebra $C$ is {\sl co-unital\/} if moreover there is a co-unit $\varepsilon:C\to k$ such that the following diagram commutes:
\diagramme{
\xymatrix{
k\otimes C  	& C\otimes C \ar[l]_{\varepsilon\otimes I}\
 	\ar[r]^{I\otimes \varepsilon} &C\otimes k 	\\
&C\ar[u]^\Delta \ar[ul]_\sim \ar[ur]^\sim &}
}
A subspace $J\subset C$ is called a {\sl subcoalgebra\/} (resp. a {\sl left
coideal, right coideal, two-sided coideal\/}) of $C$ if $\Delta(J)$ is
contained in $J\otimes J$ (resp. $C\otimes J$,  $J\otimes C$, $J\otimes C+C\otimes J$) is included in $J$. The duality alluded to above can be made more precise:
\begin{prop}\label{dual1}
\begin{enumerate}
\item The linear dual $C^*$ of a co-unital coalgebra $C$ is a unital algebra, with product (resp. unit map) the transpose of the coproduct (resp. of the co-unit). 
\item Let $J$ be a linear subspace of $C$. Denote by $J^\perp$ the orthogonal of $J$ in $C^*$. Then:
\begin{itemize}
\item
$J$ is a two-sided coideal if and only if $J^\perp$ is a subalgebra of $C^*$.
\item
$J$ is a left coideal if and only if $J^\perp$ is a left ideal of $C^*$.
\item
$J$ is a right coideal if and only if $J^\perp$ is a right ideal of $C^*$.
\item
$J$ is a subcoalgebra if and only if $J^\perp$ is a two-sided ideal of $C^*$.
\end{itemize}
\end{enumerate}
\end{prop}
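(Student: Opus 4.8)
The plan is to transpose the structure maps of $C$ and then track orthogonals through the canonical injection $\iota\colon C^*\otimes C^*\hookrightarrow(C\otimes C)^*$. For part (1) I would define the product on $C^*$ directly by $(f\cdot g)(c)=(f\otimes g)(\Delta c)$ for $f,g\in C^*$, $c\in C$; this is $\Delta^*\circ\iota$, hence the transpose of $\Delta$ in the stated sense. Associativity is a one-line consequence of co-associativity, since both $((f\cdot g)\cdot h)(c)$ and $(f\cdot(g\cdot h))(c)$ equal $(f\otimes g\otimes h)$ evaluated on the common element $(\Delta\otimes I)\Delta c=(I\otimes\Delta)\Delta c$. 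The co-unit $\varepsilon$ is the unit: $(\varepsilon\cdot f)(c)=f\big((\varepsilon\otimes I)\Delta c\big)=f(c)$ by the left co-unit axiom, and symmetrically on the right.

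For part (2) the engine is two orthogonality computations. First, as subspaces of $C^*\otimes C^*$,
$$\iota^{-1}\big((J\otimes C)^\perp\big)=J^\perp\otimes C^*,\qquad\iota^{-1}\big((C\otimes J)^\perp\big)=C^*\otimes J^\perp,$$
whence, by intersecting, $\iota^{-1}\big((J\otimes C+C\otimes J)^\perp\big)=J^\perp\otimes J^\perp$; the inclusions $\supseteq$ are immediate, and for $\subseteq$ one writes an element as $\sum_i f_i\otimes g_i$ with the $g_i$ independent and reads off $f_i\in J^\perp$. Dually, fixing a complement $C=J\oplus J'$ and using that $J^\perp$ separates the points of $J'$, I would establish the biduality (orthogonals now taken inside $C\otimes C$)
$$\big(\iota(J^\perp\otimes C^*)\big)^\perp=J\otimes C,\quad\big(\iota(C^*\otimes J^\perp)\big)^\perp=C\otimes J,\quad\big(\iota(J^\perp\otimes J^\perp)\big)^\perp=J\otimes C+C\otimes J.$$

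Each equivalence is then a direct translation. For example, $J^\perp$ is a subalgebra iff $(f\otimes g)(\Delta j)=0$ for all $f,g\in J^\perp$ and $j\in J$; since the elements $f\otimes g$ span $J^\perp\otimes J^\perp$, this says precisely that $\Delta j$ lies in $\big(\iota(J^\perp\otimes J^\perp)\big)^\perp=J\otimes C+C\otimes J$ for every $j$, i.e. that $J$ is a two-sided coideal. The left- and right-ideal cases are identical with $J^\perp\otimes J^\perp$ replaced by $C^*\otimes J^\perp$ and $J^\perp\otimes C^*$. Finally, since $J\otimes J=(J\otimes C)\cap(C\otimes J)$, a subspace is a subcoalgebra exactly when it is simultaneously a left and a right coideal, and a two-sided ideal is exactly a simultaneous left and right ideal, so the subcoalgebra case follows by combining the previous two.

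The main obstacle is the infinite-dimensional bookkeeping in the orthogonality lemmas: since $(C\otimes C)^*$ strictly contains $C^*\otimes C^*$ in general, finite-dimensional duality is unavailable, and each $\perp$-computation must be done by hand — forward inclusions by evaluation, reverse inclusions through a linear-independence argument together with the separation property of $J^\perp$ on a chosen complement of $J$. Everything else is formal.
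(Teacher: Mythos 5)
Your proof is correct and follows essentially the same route as the paper's: the product on $C^*$ is the transpose of $\Delta$ restricted to $C^*\otimes C^*$, the easy implications follow by evaluating $\langle f\otimes g,\Delta x\rangle$ for $x\in J$, and the converses rest on the biduality identity $\big(J^\perp\otimes J^\perp\big)^\perp=J\otimes C+C\otimes J$ (and its one-sided analogues), which is exactly the identity the paper invokes. The only difference is one of completeness: the paper asserts that identity without proof and leaves the last three bullets ``to the reader,'' whereas you justify the orthogonality lemmas by the complement/linear-independence argument and dispatch the subcoalgebra case neatly by writing $J\otimes J=(J\otimes C)\cap(C\otimes J)$, so that it reduces to the two one-sided cases.
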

\begin{proof}
For any subspace $K$ of $C^*$ we shall denote by $K^\perp$ the subspace of those elements of $C$ on which any element of $K$ vanishes. It coincides with the intersection of the orthogonal of $K$ with $C$, via the canonical embedding $C\inj 6 C^{**}$. So we have for any linear subspaces $J\subset C$ and $K\subset C^*$:
$$J^{\perp\perp}=J,\hskip 20mm K^{\perp\perp}\supset K.$$  
Suppose that $J$ is a two-sided coideal. Take any $\xi,\eta$ in $J^\perp$. For any $x\in J$ we have:
$$<\xi\eta,x>=<\xi\otimes\eta,\Delta x>=0,$$
as $\Delta x\subset J\otimes C+C\otimes J$. So $J^\perp$ is a subalgebra of $C^*$. Conversely if $J^\perp$ is a subalgebra then:
$$\Delta J\subset (J^\perp\otimes J^\perp)^\perp=J\otimes C+C\otimes J,$$
which proves the first assertion. We leave it to the reader as an exercice to
prove the three other assertions along the same lines. 
\end{proof}
\noindent
Dually we have the following:
\begin{prop}\label{dual2}
Let $K$ a linear subspace of $C^*$. Then:\begin{itemize}
\smallskip
\item
$K^\perp$ is a two-sided coideal if and only if $K$ is a subalgebra of $C^*$.
\item
$K^\perp$ is a left coideal if and only if $K$ is a left ideal of $C^*$.
\item
$K^\perp$ is a right coideal if and only if $K$ is a right ideal of $C^*$.
\item
$K^\perp$ is a subcoalgebra if and only if $K$ is a two-sided ideal of $C^*$.
\end{itemize}
\end{prop}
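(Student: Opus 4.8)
The plan is to read the statement off from Proposition \ref{dual1} by applying that result to the single subspace $J:=K^\perp\subseteq C$. Since $J^\perp=(K^\perp)^\perp=K^{\perp\perp}$, the four equivalences to be proved become exactly the assertions that $K^\perp$ is a two-sided coideal (resp. left coideal, right coideal, subcoalgebra) of $C$ if and only if $K^{\perp\perp}$ is a subalgebra (resp. left ideal, right ideal, two-sided ideal) of $C^*$. Everything therefore reduces to comparing the algebraic nature of $K$ with that of its biorthogonal $K^{\perp\perp}$.

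First I would recall, as in the proof of Proposition \ref{dual1}, that $K\subseteq K^{\perp\perp}$ and that $K^{\perp\perp}$ is precisely the weak-$*$ closure of $K$ for the pairing between $C^*$ and $C$. The convolution product of $C^*$, being the transpose of $\Delta$, is separately continuous for this topology: for fixed $\eta$ one has $\langle\xi\eta,x\rangle=\langle\xi,(I\otimes\eta)\circ\Delta(x)\rangle$, so $\xi\mapsto\xi\eta$ is the transpose of a fixed linear endomorphism of $C$ and hence weak-$*$ continuous, and symmetrically on the other side. A routine argument, closing $K$ in one factor and then in the other, then shows that if $K$ is a subalgebra (resp. a one-sided or two-sided ideal) so is $K^{\perp\perp}$. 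Combined with the first paragraph, this already gives the ``$K\Rightarrow K^\perp$'' direction of all four equivalences.

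The converse directions are the delicate point, and I expect them to be the main obstacle. Proposition \ref{dual1} only tells us about $K^{\perp\perp}$; to pass back from ``$K^{\perp\perp}$ is a subalgebra'' to ``$K$ is a subalgebra'' one genuinely needs $K=K^{\perp\perp}$, i.e. that $K$ be weak-$*$ closed, equivalently of the form $K=J^\perp$ for some $J\subseteq C$. This is automatic when $C$ is finite dimensional, and more to the point here it holds for every graded subspace when one works inside the graded dual of a graded coalgebra with finite-dimensional homogeneous components. Under this closedness hypothesis $K^{\perp\perp}=K$, and each of the four biconditionals follows verbatim from the matching bullet of Proposition \ref{dual1} applied to $J=K^\perp$.

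If one prefers to argue directly rather than to invoke Proposition \ref{dual1}, the same phenomenon appears transparently through the identity $K^\perp\otimes C+C\otimes K^\perp=(K\otimes K)^\perp$, valid in arbitrary dimension (write a tensor in $(K\otimes K)^\perp$ with first legs independent modulo $K^\perp$, and use that finitely many independent functionals on $K$ are jointly surjective onto $k^n$). Indeed this identity turns the condition $\Delta(K^\perp)\subseteq K^\perp\otimes C+C\otimes K^\perp$ into $\langle\xi\eta,x\rangle=0$ for all $\xi,\eta\in K$ and $x\in K^\perp$, that is into $K\cdot K\subseteq K^{\perp\perp}$, which collapses to $K\cdot K\subseteq K$ exactly when $K=K^{\perp\perp}$. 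The one-sided and subcoalgebra cases are obtained by the same computation after replacing one tensor leg by all of $C$, or by intersecting the left- and right-coideal conditions.
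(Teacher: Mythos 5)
Your argument is sound, and the comparison with the paper is peculiar for a reason you could not have seen: the proof the paper prints under Proposition \ref{dual2} never addresses the four equivalences at all. It merely constructs the unital algebra structure on $C^*$, taking $m={}^t\!\Delta$ restricted to $C^*\otimes C^*$ and $u={}^t\!\varepsilon$ --- in other words it proves assertion (1) of Proposition \ref{dual1} --- and says nothing about $K$, $K^\perp$, ideals or coideals. So you have supplied the missing content. Your route (reduce to Proposition \ref{dual1} via $J:=K^\perp$, identify $K^{\perp\perp}$ with the weak-$*$ closure of $K$, and use separate weak-$*$ continuity of convolution) correctly yields the implications ``$K$ subalgebra (resp.\ left, right, two-sided ideal) $\Rightarrow$ $K^\perp$ two-sided coideal (resp.\ left, right coideal, subcoalgebra)'' for an \emph{arbitrary} subspace $K$; and your direct alternative through the identity $(K\otimes K)^\perp=K^\perp\otimes C+C\otimes K^\perp$ is the mirror image of the identity $(J^\perp\otimes J^\perp)^\perp=J\otimes C+C\otimes J$ on which the paper's proof of \ref{dual1} rests, the difference being that yours holds unconditionally while the paper's uses $J^{\perp\perp}=J$, which is automatic for subspaces of $C$.

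Your diagnosis of the converse directions is also correct, and it is worth stressing that the closedness hypothesis you introduce repairs the statement rather than weakening your proof: without $K=K^{\perp\perp}$ the proposition is false. Let $C$ be the divided-power coalgebra with basis $(e_n)_{n\ge 0}$ and $\Delta e_n=\sum_{i+j=n}e_i\otimes e_j$, so that $C^*\cong k[[t]]$ with the convolution product. For $K=k[t]$ one has $K^\perp=\{0\}$, which is trivially a subcoalgebra and a coideal of every kind, yet $k[t]$ is not an ideal of $k[[t]]$; this refutes the last three bullets. For $K$ the linear span of $\{t+t^n:\ n\ge 2\}$ one computes $K^\perp=k\,e_0$, again a subcoalgebra and two-sided coideal, while $(t+t^2)^2=t^2+2t^3+t^4\notin K$, so $K$ is not a subalgebra; this refutes the first bullet. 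Hence the proposition holds exactly in the form you prove it: for weak-$*$ closed $K$ (automatic in finite dimension, or for graded subspaces of a graded dual with finite-dimensional components) all four equivalences follow from Proposition \ref{dual1} applied to $J=K^\perp$, and for general $K$ only the ``algebraic $\Rightarrow$ coalgebraic'' halves survive.
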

\begin{proof}
The linear dual $(C\otimes C)^*$ naturally contains the tensor product $C^*\otimes C^*$. Take as a multiplication the restriction of $^t\!\Delta$ to $C^*\otimes C^*$:
$$m=^t\!\Delta:C^*\otimes C^*\longrightarrow C^*,$$
and put $u=^t\!\varepsilon:k\to C^*$. It is easily seen, by just reverting the
arrows of the corresponding diagrams, that coassociativity of $\Delta$ implies
associativity of $m$, and that the co-unit property for $\varepsilon$ implies
that $u$ is a unit. 
\end{proof}
Note that the duality
property is not perfect: if the linear dual of a coalgebra is always an
algebra, the linear dual of an algebra is not in general a coalgebra. However
the {\sl restricted dual\/} $A^\circ$ of an algebra $A$ is a coalgebra. It is
defined as the space of linear forms on $A$ vanishing on some
finite-codimensional ideal \cite{Sw69}.\\

The coalgebra $C$ is {\sl cocommutative\/} if $\tau\circ\Delta=\Delta$, where $\tau:C\otimes C\to C\otimes C$ is the flip. It will be convenient to use {\sl Sweedler's notation\/}:
$$\Delta x=\sum_{(x)}x_1\otimes x_2.$$
Cocommutativity expresses then as:
$$\sum_{(x)}x_1\otimes x_2=\sum_{(x)}x_2\otimes x_1.$$
Coassociativity reads in Sweedler's notation:
$$(\Delta\otimes I)\circ\Delta(x)=\sum_{(x)}x_{1:1}\otimes x_{1:2}\otimes x_2=
	\sum_{(x)}x_1\otimes x_{2:1}\otimes x_{2:2}
	=(I\otimes\Delta)\circ\Delta(x),$$
We shall sometimes write the iterated coproduct as:
$$\sum_{(x)}x_1\otimes x_2\otimes x_3.$$
Sometimes we shall even mix the two ways of using Sweedler's notation for the
iterated coproduct, in the case we want to keep partially track of how we have
constructed it \cite{DNR}. For example,
\begin{eqnarray*}
\Delta_3(x)	&=&(I\otimes\Delta\otimes I)\circ(\Delta\otimes I)
	\circ\Delta(x)	\\
			&=&(I\otimes\Delta\otimes I)(\sum_{(x)}
		x_1\otimes x_2\otimes x_3)	\\
			&=&\sum_{(x)}
		x_1\otimes x_{2:1}\otimes x_{2:2}\otimes x_3.
\end{eqnarray*}
To any vector space $V$ we can associate its {\sl tensor coalgebra\/} $T^c(V)$. It is isomorphic to $T(V)$ as a vector space. The coproduct is given by the {\sl deconcatenation\/}:
$$\Delta(v_1\otimes\cdots\otimes v_n)=\sum_{p=0}^n
(v_1\otimes\cdots\otimes v_p) \bigotimes (v_{p+1}\otimes\cdots\otimes v_n).$$
The co-unit is given by the natural projection of $T^c(V)$ onto $k$.
\\ \\
Let $C$ and $D$ be unital $k$-coalgebras. We put a co-unital coalgebra structure on $C\otimes D$ in the following way: the comultiplication is given by:
$$\Delta_{C\otimes D}=\tau_{23}\circ(\Delta_C\otimes \Delta_D) ,$$
where $\tau_{23}$ is again the flip of the two middle factors, and the
co-unity is given by $\varepsilon_{C\otimes
  D}=\varepsilon_C\otimes\varepsilon_D$.
\subsection{Convolution product}
Let $A$ be an algebra and $C$ be a coalgebra (over the same field $k$). Then
there is an associative product on the space $\Cal L(C,A)$ of linear maps from
$C$ to $A$ called the {\sl convolution product\/}. It is given by:
$$\varphi*\psi=m_{A}\circ(\varphi\otimes\psi)\circ\Delta_{C}.$$
In Sweedler's notation it reads:
$$\varphi*\psi(x)=\sum_{(x)}\varphi(x_1)\psi(x_2).$$
The associativity is a direct consequence of both associativity of $A$ and
coassociativity of $C$.
\subsection{Bialgebras and Hopf algebras}
A (unital and co-unital) {\sl bialgebra\/} is a vector space $\Cal H$ endowed with a structure of unital algebra $(m,u)$ and a structure of co-unital coalgebra $(\Delta,\varepsilon)$ which are compatible. The compatibility requirement is that $\Delta$ is an algebra morphism (or equivalently that $m$ is a coalgebra morphism), $\varepsilon$ is an algebra morphism and $u$ is a coalgebra morphism. It is expressed by the commutativity of the three following diagrams:
\diagramme{
\xymatrix{\Cal H\otimes\Cal H\otimes\Cal H\otimes\Cal H
\ar[rr]^{\tau_{23}}	&&\Cal H\otimes\Cal H\otimes\Cal H\otimes\Cal H
\ar[d]^{m\otimes m}	\\
\Cal H\otimes \Cal H	\ar[u]_{\Delta\otimes\Delta} \ar[r]_m
&\Cal H \ar[r]_{\Delta}	&\Cal H\otimes\Cal H}
}
\diagramme{
\xymatrix{\Cal H\otimes\Cal H \ar[d]^m \ar[r]^{\varepsilon\otimes\varepsilon}
	&k\otimes k	\ar[d]^\sim	&&&\Cal H\otimes \Cal H	&k\otimes k \ar[l]_{u\otimes u}\\
\Cal H\ar[r]^\varepsilon &k	&&&\Cal H \ar[u]_\Delta	&k\ar[l]_u \ar[u]_\sim}
}
A {\sl Hopf algebra\/} is a bialgebra $\Cal H$ together with a linear map $S:\Cal H\to \Cal H$ called the {\sl antipode\/}, such that the following diagram commutes:
\diagramme{
\xymatrix{&\Cal H\otimes\Cal H	\ar[rr]^{S\otimes I}
				&&\Cal H\otimes \Cal H\ar[dr]^{m}	& \\
\Cal H\ar[rr]^\varepsilon \ar[dr]^\Delta \ar[ur]^\Delta
				&&	k\ar[rr]^u	&&\Cal H\\
&\Cal H\otimes\Cal H	\ar[rr]^{I\otimes S}
				&&\Cal H\otimes \Cal H\ar[ur]^{m}	&}
}
In Sweedler's notation it reads:
$$\sum_{(x)}S(x_1)x_2=\sum_{(x)}x_1S(x_2)=(u\circ\varepsilon)(x).$$
In other words the antipode is an inverse of the identity $I$ for the
convolution product on $\Cal L(\Cal H,\Cal H)$. The unit for the convolution is the map $u\circ\varepsilon$.
\\

A {\sl primitive element\/} in a bialgebra $\Cal H$ is an element $x$ such
that $\Delta x=x\otimes 1+1\otimes x$. A {\sl grouplike element\/} is a
nonzero element $x$ such that $\Delta x=x\otimes x$. Note that grouplike
elements make sense in any coalgebra.
\\

A {\sl bi-ideal\/} in a bialgebra $\Cal H$ is a two-sided ideal which is also
a two-sided co-ideal. A {\sl Hopf ideal\/} in a Hopf algebra $\Cal H$ is a
bi-ideal $J$ such that $S(J)\subset J$.
\subsection{Some simple examples of Hopf algebras}\label{examples}
\subsubsection{The Hopf algebra of a group}\label{ex1}
Let $G$ be a group, and let $kG$ be the group algebra (over the field $k$). It
is by definition the vector space freely generated by the elements of $G$:
the product of $G$ extends uniquely to a bilinear map from $kG\times kG$ into
$kG$, hence a multiplication $m:kG\otimes kG\to kG$, which is associative. The
neutral element of $G$ gives the unit for $m$. The space $kG$ is also endowed with a co-unital coalgebra structure, given by:
$$\Delta (\sum \lambda_ig_i)=\sum \lambda_i.g_i\otimes g_i$$
and:
$$\varepsilon (\sum \lambda_ig_i)=\sum \lambda_i.$$
This defines the {\sl coalgebra of the set $G$\/}: it does not take into account the extra group structure on $G$, as the algebra structure does.
\begin{prop}  
The vector space $kG$ endowed with the algebra and coalgebra structures defined above is a Hopf algebra. The antipode is given by:
$$S(g)=g\inver, g\in G.$$
\end{prop}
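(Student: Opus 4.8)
The plan is to reduce every verification to the basis $G\subset kG$. Since $kG$ is freely generated as a vector space by the group elements, and all the structure maps ($m$, $u$, $\Delta$, $\varepsilon$, $S$) are linear or bilinear, it suffices to check each axiom on elements $g,h\in G$ and then extend by (bi)linearity; the group axioms will supply exactly what is needed at each stage.

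First I would record that $(kG,\Delta,\varepsilon)$ is a genuine coalgebra: on a generator $(\Delta\otimes I)\Delta(g)=g\otimes g\otimes g=(I\otimes\Delta)\Delta(g)$ gives coassociativity, and $(\varepsilon\otimes I)\Delta(g)=1\otimes g$ matches $g$ under the canonical isomorphism $k\otimes kG\simeq kG$, giving the counit property. The associativity and unitality of $m$ are immediate transcriptions of the group axioms, with unit $\un=e$ the neutral element. The one computation carrying real content is that $\Delta$ is an algebra morphism: using the tensor-product algebra structure on $kG\otimes kG$,
$$\Delta(g)\Delta(h)=(g\otimes g)(h\otimes h)=gh\otimes gh=\Delta(gh),\qquad \Delta(\un)=e\otimes e=\un_{kG\otimes kG}.$$
Likewise $\varepsilon(gh)=1=\varepsilon(g)\varepsilon(h)$ and $\varepsilon(\un)=1$ show $\varepsilon$ is an algebra morphism, while the statement that $u$ is a coalgebra morphism is just $\Delta(\un)=\un\otimes\un$ and $\varepsilon(\un)=1$, already verified. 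This establishes the bialgebra structure through the three compatibility diagrams.

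It then remains to check the antipode axiom for $S$ defined on the basis by $S(g)=g\inver$ and extended linearly. Because $\Delta(g)=g\otimes g$, Sweedler's notation gives $g_1=g_2=g$, so that
$$\sum_{(g)}S(g_1)g_2=g\inver g=e=\un,\qquad \sum_{(g)}g_1S(g_2)=g\,g\inver=e=\un,$$
whereas $(u\circ\varepsilon)(g)=u(1)=\un$. Both sides therefore agree on every generator, and by linearity $S$ is the convolution inverse of the identity $I$ in $\Cal L(kG,kG)$, which is precisely the defining property of the antipode.

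The main obstacle is essentially illusory: there is no delicate analytic step, and the only point deserving genuine attention is the multiplicativity of the diagonal coproduct, i.e. the identity $(g\otimes g)(h\otimes h)=gh\otimes gh$ in the tensor-product algebra, where the group multiplication and the tensor multiplication are arranged to match. Everything else is a direct reading-off of the group axioms, and the well-definedness of $S$ is automatic since $g\mapsto g\inver$ is specified on a basis.
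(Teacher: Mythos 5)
Your proof is correct and follows essentially the same route as the paper: the key steps in both are the verification on group elements that $\Delta(gh)=gh\otimes gh=(g\otimes g)(h\otimes h)=\Delta(g)\Delta(h)$, and that $m(S\otimes I)\Delta(g)=g\inver g=e=(u\circ\varepsilon)(g)$ (and symmetrically), everything else extending by linearity. The paper merely compresses the routine checks (coassociativity, counit, multiplicativity of $\varepsilon$) that you spell out explicitly.
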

\begin{proof}
The compatibility of the product and the coproduct is an immediate consequence of the following computation: for any $g,h\in G$ we have:
$$\Delta(gh)=gh\otimes gh=(g\otimes g)(h\otimes h)=\Delta g.\Delta h.$$
Now $m(S\otimes I)\Delta(g)=g\inver g=e$ and similarly for $m(I\otimes S)\Delta(g)$. But $e=u\circ\varepsilon(g)$ for any $g\in G$, so the map $S$ is indeed the antipode.
\end{proof}
\begin{rmk}if $G$ were only a semigroup, the same construction would lead to a bialgebra structure on $kG$: the Hopf algebra structure (i.e. the existence of an antipode) reflects the group structure (the existence of the inverse). We have $S^2=I$ in this case, but involutivity of the antipode is not true for general Hopf algebras.
\end{rmk}
\subsubsection{Tensor algebras}\label{ex2}
There is a natural structure of cocommutative Hopf algebra on the tensor algebra $T(V)$ of any vector space $V$. Namely we define the coproduct $\Delta$ as the unique algebra morphism from $T(V)$ into $T(V)\otimes T(V)$ such that:
$$\Delta(1)=1\otimes 1,\hskip 12mm \Delta(x)=x\otimes 1+1\otimes x, \ x\in V.$$
We define the co-unit as the algebra morphism such that $\varepsilon(1)=1$ and $\varepsilon\restr V=0$ This endows $T(V)$ with a cocommutative bialgebra structure. We claim that the principal anti-automorphism:
$$S(x_1\otimes\cdots\otimes x_n)=(-1)^nx_n\otimes\cdots\otimes x_1$$
verifies the axioms of an antipode, so that $T(V)$ is indeed a Hopf
algebra. For $x\in V$ we have $S(x)=-x$, hence $S*I(x)=I*S(x)=0$. As $V$
generates $T(V)$ as an algebra it is easy to conclude.
\subsubsection{Enveloping algebras}\label{ex3}
Let $\g g$ a Lie algebra. The universal enveloping algebra is the quotient of
the tensor algebra $T(\g g)$ by the ideal $J$ generated by $x\otimes
y-y\otimes x-[x,y], \ x,y\in\g g$.
\begin{lem}
$J$ is a Hopf ideal, i.e. $\Delta(J)\subset\Cal H\otimes J+J\otimes\Cal H$ and
$S(J)\subset J$.
\end{lem}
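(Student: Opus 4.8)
\emph{Proof proposal.} The plan is to reduce everything to the generators of $J$. Writing $r(x,y) = x\otimes y - y\otimes x - [x,y]$ for $x,y\in\g g$, the ideal $J$ is the two-sided ideal of $T(\g g)$ generated by all the $r(x,y)$, so a typical element of $J$ is a sum of terms $a\,r(x,y)\,b$ with $a,b\in T(\g g)$. Since $J$ is already a two-sided ideal, $T(\g g)\otimes J + J\otimes T(\g g)$ is a two-sided ideal of $T(\g g)\otimes T(\g g)$; because $\Delta$ is an algebra morphism we have $\Delta(a\,r(x,y)\,b)=\Delta(a)\,\Delta(r(x,y))\,\Delta(b)$, so the coideal condition follows once we know $\Delta(r(x,y))\in T(\g g)\otimes J + J\otimes T(\g g)$. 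Likewise, since $S$ is an algebra anti-automorphism, $S(a\,r(x,y)\,b)=S(b)\,S(r(x,y))\,S(a)$, and $S(J)\subset J$ follows once $S(r(x,y))\in J$. Thus the whole lemma reduces to two computations on a single generator.

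I would then carry out those computations directly, writing the internal product of $T(\g g)$ by juxtaposition and reserving $\otimes$ for the external tensor product $T(\g g)\otimes T(\g g)$, so that the generator reads $r(x,y)=xy-yx-[x,y]$. Since $x$, $y$ and $[x,y]$ all lie in $\g g$ they are primitive, and expanding in the algebra $T(\g g)\otimes T(\g g)$ gives
$$\Delta(xy)=\Delta(x)\Delta(y)=(x\otimes 1+1\otimes x)(y\otimes 1+1\otimes y)=xy\otimes 1+x\otimes y+y\otimes x+1\otimes xy,$$
and symmetrically for $\Delta(yx)$; the two mixed terms $x\otimes y+y\otimes x$ occur in both and cancel in the difference. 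Subtracting $\Delta([x,y])=[x,y]\otimes 1+1\otimes[x,y]$ then leaves
$$\Delta(r(x,y))=r(x,y)\otimes 1+1\otimes r(x,y),$$
which visibly lies in $J\otimes T(\g g)+T(\g g)\otimes J$.

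For the antipode I use the formula $S(x_1\cdots x_n)=(-1)^n x_n\cdots x_1$, which sends $xy$ to $yx$, sends $yx$ to $xy$, and sends $[x,y]\in\g g$ to $-[x,y]$. Hence $S(r(x,y))=yx-xy+[x,y]=-r(x,y)\in J$. Combined with the reduction above, these two identities establish the lemma.

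There is no deep difficulty here, the argument being entirely formal once the reduction to generators is in place. The one genuine pitfall — and the place I would be most careful — is notational: one must not confuse the internal product of $T(\g g)$ with the external tensor product of $T(\g g)\otimes T(\g g)$ when expanding $\Delta(x)\Delta(y)$, since a single ordering or bookkeeping slip there is essentially the only way the computation could fail. The supporting structural fact, that $T(\g g)\otimes J+J\otimes T(\g g)$ is a two-sided ideal of $T(\g g)\otimes T(\g g)$ precisely because $J$ is a two-sided ideal of $T(\g g)$, is what legitimises testing the coideal condition on the generators alone.
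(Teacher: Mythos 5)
Your proof is correct and follows essentially the same route as the paper: the paper's one-line proof observes that $J$ is generated by primitive elements and that any ideal generated by primitives is a Hopf ideal, which is exactly your reduction-to-generators argument plus the computation that $\Delta(r(x,y))=r(x,y)\otimes 1+1\otimes r(x,y)$. The only cosmetic difference is that you verify $S(r(x,y))=-r(x,y)$ via the explicit anti-automorphism formula, where the paper would invoke $S(x)=-x$ for primitive $x$ (its Proposition on primitive elements); both are immediate.
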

\begin{proof}
The ideal $J$ is generated by primitive elements (according to proposition
\ref{primitives} below), and any ideal generated by primitive elements is a Hopf ideal (very easy and left to the reader).
\end{proof}
The quotient of a Hopf algebra by a Hopf ideal is a Hopf algebra. Hence the
universal enveloping algebra $\Cal U(\g g)$ is a cocommutative Hopf algebra.
\subsection{Some basic properties of Hopf algebras}
We summarize in the proposition below the main properties of the antipode in a Hopf algebra:
\begin{prop}\label{hopf-gen}{\rm (cf. \cite{Sw69} proposition 4.0.1)}
Let $\Cal H$ be a Hopf algebra with multiplication $m$, comultiplication
$\Delta$, unit $u:1\mapsto \un$, co-unit $\varepsilon$ and antipode
$S$. Then:
\begin{enumerate}
\item $S\circ u=u$ and $\varepsilon\circ S=\varepsilon$.
\smallskip
\item $S$ is an algebra antimorphism and a coalgebra antimorphism, i.e. if $\tau$ denotes the flip we have:
$$m\circ (S\otimes S)\circ\tau=S\circ m,\hskip 12mm \tau\circ(S\otimes S)\circ\Delta=\Delta\circ S.$$
\item If $\Cal H$ is commutative or cocommutative, then $S^2=I$.
\end{enumerate}
\end{prop}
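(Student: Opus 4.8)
The whole proposition rests on one structural fact already at our disposal: by definition the antipode $S$ is the two-sided inverse of the identity $I$ for the convolution product on $\Cal L(\Cal H,\Cal H)$, whose unit is $u\circ\varepsilon$, and in any associative monoid a two-sided inverse is unique. My plan is therefore, for each claimed identity, to produce two candidate maps that are respectively a left and a right convolution inverse of one and the same map in a suitable convolution algebra, and then to invoke uniqueness. Part (1) is the warm-up. For $S\circ u=u$ I would observe that $\un$ is grouplike, so the antipode axiom $\sum_{(x)}S(x_1)x_2=\varepsilon(x)\un$ specialised at $x=\un$ reads $S(\un)\un=\un$, i.e. $S(\un)=\un$. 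For $\varepsilon\circ S=\varepsilon$ I would apply $\varepsilon$ to the same axiom and use the counit property $\sum_{(x)}x_1\varepsilon(x_2)=x$ together with the fact that $\varepsilon$ is a left inverse of $u$.

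The heart of the argument is part (2). For the algebra-antimorphism identity I would work in the convolution algebra $\Cal L(\Cal H\otimes\Cal H,\Cal H)$, where $\Cal H\otimes\Cal H$ carries the tensor-product coalgebra structure and the target is the algebra $\Cal H$; its convolution unit is $u\circ(\varepsilon\otimes\varepsilon)$. I would show that $m$ admits $S\circ m$ as a right inverse and $m\circ(S\otimes S)\circ\tau$ as a left inverse: the first because $\sum (ab)_1\,S\big((ab)_2\big)=\varepsilon(a)\varepsilon(b)\un$, using that $\Delta$ is an algebra morphism so that $\Delta(ab)=\sum a_1b_1\otimes a_2b_2$; the second because $\sum S(b_1)S(a_1)\,a_2b_2=\varepsilon(a)\varepsilon(b)\un$, obtained by collapsing first the $a$-antipode relation and then the $b$-antipode relation. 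Uniqueness of inverses then forces $S\circ m=m\circ(S\otimes S)\circ\tau$. The coalgebra-antimorphism identity is dual: I would pass to $\Cal L(\Cal H,\Cal H\otimes\Cal H)$, with $\Cal H\otimes\Cal H$ now the tensor-product algebra and convolution unit $x\mapsto\varepsilon(x)\,\un\otimes\un$, and exhibit $\Delta\circ S$ as a right inverse and $\tau\circ(S\otimes S)\circ\Delta$ as a left inverse of $\Delta$, again concluding by uniqueness.

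Finally, for part (3) I would show that $S^2$ is a one-sided convolution inverse of $S$ in $\Cal L(\Cal H,\Cal H)$. Using the antimorphism property just established, $\sum_{(x)}S^2(x_1)\,S(x_2)=\sum_{(x)}S\big(x_2\,S(x_1)\big)$. At this point I would invoke the hypothesis: if $\Cal H$ is commutative then $x_2S(x_1)=S(x_1)x_2$, while if $\Cal H$ is cocommutative then $\sum_{(x)}x_2\otimes S(x_1)=\sum_{(x)}x_1\otimes S(x_2)$; in either case $\sum_{(x)}x_2S(x_1)=\sum_{(x)}x_1S(x_2)=\varepsilon(x)\un$, so the expression equals $S(\varepsilon(x)\un)=\varepsilon(x)\un$ by part (1). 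Hence $S^2*S=u\circ\varepsilon$, and since $S*I=u\circ\varepsilon$ as well, the uniqueness of the inverse of $S$ yields $S^2=S^2*(S*I)=(S^2*S)*I=I$.

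The only genuinely delicate point is the bookkeeping in the two left-inverse computations of part (2): collapsing nested Sweedler indices correctly requires using coassociativity to recognise an inner pair of tensor factors as the coproduct of a single element, so that the antipode or counit relation may be applied to it before the outer contraction is carried out. I expect this index manipulation, rather than any conceptual difficulty, to be the main thing to get right; everything else follows formally from the uniqueness of inverses in an associative monoid.
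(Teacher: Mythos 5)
Your proposal is correct, and in fact it supplies a proof where the paper gives none: for this proposition the paper simply defers to the literature (``For a detailed proof, see Chr.\ Kassel in \cite{K}'', citing Sweedler's Proposition 4.0.1), so there is no in-paper argument to compare against. What you have written is precisely the classical proof from those references: each identity is obtained by exhibiting a left and a right convolution inverse of the same element in a suitable convolution algebra --- $\Cal L(\Cal H\otimes\Cal H,\Cal H)$ with the tensor \emph{coalgebra} structure on the source for the algebra-antimorphism identity, $\Cal L(\Cal H,\Cal H\otimes\Cal H)$ with the tensor \emph{algebra} structure on the target for the coalgebra-antimorphism identity, and $\Cal L(\Cal H,\Cal H)$ itself for $S^2=I$ --- and then invoking uniqueness of inverses in an associative monoid. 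All the individual computations check out: the right-inverse computation for $m$ uses that $\Delta$ is an algebra morphism, the left-inverse computation collapses the $a$- and $b$-antipode relations in turn (here no coassociativity is even needed, since the Sweedler indices of $a$ and $b$ are independent; coassociativity genuinely enters only in the dual, coalgebra-antimorphism computation, as you note), and part (3) correctly uses part (2) to write $S^2(x_1)S(x_2)=S\bigl(x_2S(x_1)\bigr)$ before splitting into the commutative and cocommutative cases, finishing with $S(\un)=\un$ from part (1). The only cosmetic slip is the phrase ``$\sum_{(x)}x_2S(x_1)=\sum_{(x)}x_1S(x_2)$ in either case'': in the commutative case the correct route is $\sum_{(x)}x_2S(x_1)=\sum_{(x)}S(x_1)x_2=\varepsilon(x)\un$ directly, but since both sides equal $\varepsilon(x)\un$ in both cases, the conclusion is unaffected.
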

\noindent
For a detailed proof, see Chr. Kassel in \cite K.
\begin{prop}\label{primitives}
\begin{enumerate}
\item If $x$ is a primitive element then $S(x)=-x$.
\smallskip
\item The linear subspace $\mop{Prim}\Cal H$ of primitive elements in $\Cal H$ is a Lie algebra.
\end{enumerate}
\end{prop}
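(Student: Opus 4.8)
The plan is to treat the two assertions separately, using only the defining property of a primitive element, the antipode axiom in Sweedler form, and the fact recorded in Proposition \ref{hopf-gen} that $S\circ u=u$, so in particular $S(\un)=\un$.

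For the first assertion I would first check that a primitive element has vanishing co-unit. Applying $\varepsilon\otimes I$ to $\Delta x=x\otimes\un+\un\otimes x$ and using the co-unit property $(\varepsilon\otimes I)\circ\Delta=I$ (after the identification $k\otimes\Cal H\cong\Cal H$) gives $x=\varepsilon(x)\un+x$, whence $\varepsilon(x)=0$. I then substitute the same expression for $\Delta x$ into the antipode relation $m\circ(S\otimes I)\circ\Delta(x)=u\circ\varepsilon(x)$. The left-hand side unfolds to $S(x)\un+S(\un)x=S(x)+x$, using $S(\un)=\un$, while the right-hand side is $\varepsilon(x)\un=0$. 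Comparing the two yields $S(x)=-x$.

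For the second assertion the key point is that $\Delta$ is an algebra morphism, so for primitive $x,y$ I can compute $\Delta(xy)$ by multiplying $\Delta x$ and $\Delta y$ in $\Cal H\otimes\Cal H$ with the product $(a\otimes b)(c\otimes d)=ac\otimes bd$. Expanding $(x\otimes\un+\un\otimes x)(y\otimes\un+\un\otimes y)$ produces the four terms $xy\otimes\un$, $x\otimes y$, $y\otimes x$, and $\un\otimes xy$. Carrying out the same computation for $\Delta(yx)$ and subtracting, the mixed terms $x\otimes y+y\otimes x$ cancel, leaving precisely $\Delta([x,y])=[x,y]\otimes\un+\un\otimes[x,y]$. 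Thus the commutator of two primitive elements is again primitive. Linearity of $\Delta$ already shows that $\mop{Prim}\Cal H$ is a linear subspace, and antisymmetry together with the Jacobi identity for $[\cdot,\cdot]$ hold automatically in any associative algebra, so $\mop{Prim}\Cal H$ is a Lie algebra under the commutator bracket.

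I do not anticipate a genuine obstacle, as both parts reduce to short Sweedler-level computations. The only step demanding care is the bookkeeping in the second part: one must use the correct algebra structure on $\Cal H\otimes\Cal H$ and keep track of which of the four product terms survive after antisymmetrization, since it is exactly the cancellation of the cross terms $x\otimes y$ and $y\otimes x$ that forces the bracket to be primitive.
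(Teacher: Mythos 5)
Your proof is correct and follows essentially the same route as the paper: deduce $\varepsilon(x)=0$ from the co-unit axiom, plug the primitivity of $x$ into the antipode relation to get $S(x)=-x$, and use the fact that $\Delta$ is an algebra morphism to check that the commutator of two primitives is primitive. The only cosmetic difference is that the paper derives $\varepsilon(x)=0$ by applying $\varepsilon\otimes\varepsilon$ to $\Delta x$ instead of $\varepsilon\otimes I$, and it leaves the use of $S(\un)=\un$ and the automatic validity of antisymmetry and Jacobi implicit, which you spell out.
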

\begin{proof}
If $x$ is primitive, then $(\varepsilon\otimes\varepsilon)\circ\Delta (x)=2\varepsilon(x)$. On the other hand, $(\varepsilon\otimes\varepsilon)\circ\Delta (x)=\varepsilon(x)$, so $\varepsilon(x)=0$. Then:
$$0=(u\circ\varepsilon)(x)=m(S\otimes I)\Delta(x)=S(x)+x. $$
Now let $x$ and $y$ be primitive elements of $\Cal H$. Then we can easily
compute:
\begin{eqnarray*}
\Delta(xy-yx)	&=&(x\otimes \un+\un\otimes x)(y\otimes \un
+\un\otimes y)-(y\otimes \un+\un\otimes y)(x\otimes \un+\un\otimes x)	\\
				&=&(xy-yx)\otimes \un+\un\otimes (xy+yx)
+x\otimes y+y\otimes x-y\otimes x-x\otimes y	\\
				&=&(xy-yx)\otimes \un+\un\otimes (xy-yx).
\end{eqnarray*}
\end{proof}
\section{Connected Hopf algebras}
We introduce the crucial property of connectedness for bialgebras. The main
interest resides in the possibility to implement recursive procedures in
connected bialgebras, the induction taking place with respect to a filtration or a grading. An important example of these
techniques is the recursive construction of the antipode, which then ``comes
for free'', showing that any connected bialgebra is in fact a connected Hopf
algebra.
\subsection{Connected graded bialgebras}
A {\sl graded Hopf algebra\/} on $k$ is a graded $k$-vector space:
$$\Cal H=\bigoplus_{n\ge 0}\Cal H_n$$
endowed with a product $m:\Cal H\otimes \Cal H\to\Cal H$, a coproduct
$\Delta:\Cal H\to\Cal H\otimes \Cal H$, a unit $u:k\to\Cal H$, a co-unit
$\varepsilon:\Cal H\to k$ and an antipode $S:\Cal H\to\Cal H$ fulfilling the
usual axioms of a Hopf algebra, and such that:
\begin{eqnarray}
\Cal H_p.\Cal H_q	&\subset& \Cal H_{p+q}	\\
		\Delta(\Cal H_n)	&\subset& \bigoplus_{p+q=n}\Cal H_p\otimes\Cal H_q.
\end{eqnarray}
If we do not ask for the existence of an antipode $\Cal H$ we get the definition of a {\sl graded bialgebra\/}. In a graded bialgebra $\Cal H$ we shall consider the increasing filtration:
$$\Cal H^n=\bigoplus_{p=0}^n\Cal H_p.$$
It is an easy exercice (left to the reader) to prove that the unit $u$ and the co-unit $\varepsilon$ are degree zero maps, i.e. $\un\in\Cal H_0$ and $\varepsilon(\Cal H_n)=\{0\}$ for $n\ge 1$. One also can show that the antipode $S$, when it exists, is also of degree zero, i.e. $S(\Cal H_n)\subset\Cal H_n$. It can be proved as follows: let $S':\Cal H\to\Cal H$ be defined so that $S'(x)$ is the $n^{\smop{th}}$ homogeneous component os $S(x)$ when $x$ is homogeneous of degree $n$. We can write down the coproduct $\Delta (x)$ with the Sweedler notation:
\begin{equation*}
\Delta(x)=\sum_{(x)}x_1\otimes x_2,
\end{equation*}
where $x_1$ and $x_2$ are homogeneous of degree, say, $k$ and $n-k$. We have then:
\begin{equation}
m\circ(S'\otimes \mop{Id})\circ\Delta(x)=\sum_{(x)}S'(x_1)x_2=n^{\smop{th}} \hbox{ component of }\sum_{(x)}S(x_1)x_2=\varepsilon(x)\un.
\end{equation}
Similarly, $m\circ(\mop{Id}\otimes S')\circ\Delta(x)=\varepsilon(x)\un.$ By uniqueness of the antipode we get then $S'=S$.\\

Suppose moreover that $\Cal H$ is {\sl connected\/}, i.e. $\Cal H_{0}$ is one-dimensional. Then we have:
$$\mop{Ker}\varepsilon=\bigoplus_{n\ge 1}\Cal H_n.$$
\begin{prop}\label{coproduit-connexe}
For any $x\in\Cal H^n, n\ge 1$ we can write:
$$\Delta x=x\otimes\un+\un\otimes x+\wt\Delta x,\hskip 12mm 
\wt\Delta x\in\bigoplus_{p+q=n,\, p\not= 0,\, q\not= 0}\Cal H_p\otimes\Cal H_q.$$
The map $\wt\Delta$ is coassociative on $\mop{Ker}\varepsilon$ and $\wt\Delta_k=(I^{\otimes k-1}\otimes\wt\Delta)(I^{\otimes k-2}\otimes\wt\Delta)...\wt\Delta$ sends $\Cal H^n$ into $(\Cal H^{n-k})^{\otimes k+1}$.
\end{prop}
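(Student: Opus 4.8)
The plan is to prove the three assertions in order, leaning on connectedness ($\Cal H_0=k\un$) and the counit axiom. First, for the splitting of $\Delta$, I would fix a homogeneous $x\in\Cal H_n$ with $n\ge1$. Since $\Delta(\Cal H_n)\subset\bigoplus_{p+q=n}\Cal H_p\otimes\Cal H_q$ and $\Cal H_0=k\un$, the components with $p=0$ and with $q=0$ may be written $\un\otimes a$ and $b\otimes\un$ for some $a,b\in\Cal H_n$, while the remaining summands form $\wt\Delta x\in\bigoplus_{p+q=n,\,p,q\ge1}\Cal H_p\otimes\Cal H_q$. Applying $\varepsilon\otimes I$ and using $(\varepsilon\otimes I)\Delta=I$ together with $\varepsilon(\Cal H_m)=\{0\}$ for $m\ge1$ and $\varepsilon(\un)=1$, every term is killed except $a$, whence $a=x$; symmetrically $I\otimes\varepsilon$ forces $b=x$. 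This is the asserted decomposition, which one reads off degree by degree for a general $x\in\Cal H^n$.

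Next I would deduce coassociativity of $\wt\Delta$ on $\mop{Ker}\varepsilon$ directly from that of $\Delta$. Write $\wt\Delta x=\sum x'\otimes x''$, where each $x',x''$ lies in $\mop{Ker}\varepsilon$ (their homogeneous parts have degree $\ge1$), so the decomposition of the first part applies to $x$, $x'$ and $x''$ alike. Expanding $(\Delta\otimes I)\Delta x$ and $(I\otimes\Delta)\Delta x$ by inserting these formulas produces, on each side, the same collection of terms containing at least one tensor factor equal to $\un$ — the three ``fully linear'' terms $x\otimes\un\otimes\un$, $\un\otimes x\otimes\un$, $\un\otimes\un\otimes x$ and the three mixed terms $\sum x'\otimes x''\otimes\un$, $\sum x'\otimes\un\otimes x''$, $\sum\un\otimes x'\otimes x''$ — plus $(\wt\Delta\otimes I)\wt\Delta x$ on the left and $(I\otimes\wt\Delta)\wt\Delta x$ on the right. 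Since $(\Delta\otimes I)\Delta=(I\otimes\Delta)\Delta$, cancelling the common terms yields $(\wt\Delta\otimes I)\wt\Delta x=(I\otimes\wt\Delta)\wt\Delta x$. The careful bookkeeping of these six matching terms is the one genuinely error-prone step, though it is entirely mechanical.

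Finally, for the iterated reduced coproduct, I would count degrees. The coproduct is degree-preserving, and by the first part $\wt\Delta(\Cal H_n)$ lies in a sum of $\Cal H_p\otimes\Cal H_q$ with $p,q\ge1$; in particular $\wt\Delta$ kills $\Cal H_1$ and splits any homogeneous element into two factors of strictly smaller, but positive, degree. Arguing by induction on $k$ using $\wt\Delta_k=(I^{\otimes k-1}\otimes\wt\Delta)\wt\Delta_{k-1}$, every surviving term of $\wt\Delta_k x$, for homogeneous $x$ of degree $n$, is a tensor of $k+1$ homogeneous factors each of degree $\ge1$ whose degrees add up to $n$. Consequently each factor has degree at most $n-k$, hence lies in $\Cal H^{n-k}$, giving $\wt\Delta_k(\Cal H_n)\subset(\Cal H^{n-k})^{\otimes k+1}$. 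Summing over the homogeneous components of an arbitrary element of $\Cal H^n$ (and noting that $\wt\Delta_k$ vanishes on $\Cal H_p$ for $p<k$, consistent with $\Cal H^{p-k}=0$) extends the inclusion to all of $\Cal H^n$.
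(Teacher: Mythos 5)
Your proposal is correct and follows essentially the same route as the paper's own proof: connectedness plus the counit axiom to pin down the $p=0$ and $q=0$ components of $\Delta x$, cancellation of the terms involving $\un$ in the expansion of $(\Delta\otimes I)\Delta=(I\otimes\Delta)\Delta$ to get coassociativity of $\wt\Delta$, and degree counting by induction on $k$ for the iterated reduced coproduct. Your first step is if anything slightly more careful than the paper's, since you let the degree-$(0,n)$ and $(n,0)$ components be $\un\otimes a$ and $b\otimes\un$ with $a,b\in\Cal H_n$ arbitrary and then deduce $a=b=x$, rather than assuming from the outset that they are scalar multiples of $\un\otimes x$ and $x\otimes\un$.
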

\begin{proof}
Thanks to connectedness we clearly can write:
$$\Delta x=a(x\otimes 1)+b(1\otimes x)+\wt\Delta x$$
with $a,b\in k$ and $\wt\Delta
x\in\mop{Ker}\varepsilon\otimes\mop{Ker}\varepsilon$. The co-unit property
then tells us that, with $k\otimes \Cal H$ and $\Cal H\otimes k$ canonically
identified with $\Cal H$:
\begin{equation}
x	=(\varepsilon\otimes I)(\Delta x)=bx,\hskip 15mm	
	x	=(I\otimes\varepsilon)(\Delta x)=ax,
\end{equation}
hence $a=b=1$. We shall use the following two variants of Sweedler's
notation:
\begin{eqnarray}
\Delta x	&=&\sum_{(x)}x_1\otimes x_2,	\\
	\wt\Delta x	&=&\sum_{(x)}x'\otimes x'',
\end{eqnarray}
the second being relevant only for $x\in\mop{Ker}\varepsilon$. if $x$ is
homogeneous of degree $n$ we can suppose that the components $x_1,x_2,x',x''$
in the expressions above are homogeneous as well, and we have then
$|x_1|+|x_2|=n$ and $|x'|+|x''|=n$ We easily compute:
\begin{eqnarray*}
(\Delta\otimes I)\Delta(x)	&=&x\otimes 1\otimes 1 +1\otimes x\otimes 1
						+1\otimes 1\otimes x	\\
				&\ +&\sum_{(x)}
x'\otimes x''\otimes 1 + x'\otimes 1\otimes x'' + 1\otimes x'\otimes x''\\
				&\ +&(\wt\Delta\otimes I)\wt \Delta (x)
\end{eqnarray*}
and
\begin{eqnarray*}
(I\otimes\Delta)\Delta(x)	&=&x\otimes 1\otimes 1 +1\otimes x\otimes 1
						+1\otimes 1\otimes x	\\
				&\ +&\sum_{(x)}
x'\otimes x''\otimes 1 + x'\otimes 1\otimes x'' + 1\otimes x'\otimes x''\\
				&\ +&(I\otimes\wt\Delta)\wt \Delta (x),
\end{eqnarray*}
hence the co-associativity of $\wt\Delta$ comes from the one of
$\Delta$. Finally it is easily seen by induction on $k$ that for any $x\in\Cal
H^n$ we can write:
\begin{equation}
\wt\Delta_k(x)=\sum_{x}x^{(1)}\otimes\cdots\otimes x^{(k+1)},
\end{equation}
with $|x^{(j)}|\ge 1$. The grading imposes:
$$\sum_{j=1}^{k+1}|x^{(j)}|=n,$$
so the maximum possible for any degree $|x^{(j)}|$ is $n-k$.
\end{proof}
\subsection{An example: the Hopf algebra of decorated rooted trees}
A \textsl{rooted tree} is an oriented graph with a finite number of vertices,
one among them being distinguished as the \textsl{root}, such that any vertex
admits exactly one incoming edge, except the root which has no incoming
edges. Here is the list of rooted trees up to five vertices:
\begin{equation*}
\racine \hskip 5mm \arbrea \hskip 5mm  \arbreba \arbrebb \hskip 5mm  \arbreca \arbrecb \arbrecc
\arbrecd \hskip 5mm  \arbreda \arbredb \arbredc \arbredd \arbrede \arbredf \arbredz \arbredg
\arbredh
\end{equation*}
A \textsl{rooted forest} is a finite collection of rooted trees. The
Connes--Kreimer Hopf algebra ${\mathcal
  H}_{\makebox{{\tiny{CK}}}}=\bigoplus_{n\geq 0} {\mathcal
  H}_{\makebox{{\tiny{CK}}}}^{(n)}$ is the Hopf algebra of rooted forests over
$k$, graded by the number of vertices. It is the free commutative algebra on
the linear space $\Cal T$ spanned by nonempty rooted trees. The coproduct on a rooted forest $u$ (i.e. a product of rooted trees) is described as follows: the set $U$ of vertices of a forest $u$ is endowed with a partial order defined by $x \le y$ if and only if there is a path from a root to $y$ passing through $x$. Any subset $W$ of the set of vertices $U$ of $u$ defines a {\sl subforest\/} $w$ of $u$ in an obvious manner, i.e. by keeping the edges of $u$ which link two elements of $W$. The coproduct is then defined by:
\begin{equation}
\label{coprod}
	\Delta_{\makebox{{\tiny{CK}}}}(u)= \sum_{V \amalg W=U \atop W<V}v\otimes w.
\end{equation}
Here the notation $W<V$ means that $y<x$ for any vertex $x$ of $v$ and any
vertex $y$ of $w$ such that $x$ and $y$ are comparable. Such a couple $(V,W)$
is also called an {\sl admissible cut\/}, with crown (or pruning) $v$ and
trunk $w$. We have for example:
 \allowdisplaybreaks{
\begin{eqnarray*}
 \Delta_{\makebox{{\tiny{CK}}}}\big(\arbrea\big) &=&
                        \arbrea \otimes \un
                            + \un \otimes \arbrea
                               + \racine \otimes \racine \\
 \Delta_{\makebox{{\tiny{CK}}}}\big(\! \arbrebb \big) &=& 
 					\arbrebb \otimes \un
                               + \un \otimes \arbrebb +
                                  2\racine \otimes\arbrea
                                   + \racine\racine\otimes \racine 
\end{eqnarray*}}

With the restriction that $V$ and $W$ be nonempty (i.e. if $V$ and $W$ give rise to an ordered partition of $U$ into two blocks) we get the restricted coproduct:
\begin{equation}
\label{coprod2}
\wt\Delta_{\makebox{{\tiny{CK}}}}(u)=\Delta_{\makebox{{\tiny{CK}}}}(u)-u\otimes\un -\un\otimes u=\sum_{V\amalg W=U \atop W<V,\, V,W\not =\emptyset}v\otimes w,
\end{equation}
which is often displayed $\sum_{(u)} u' \otimes u''$ in Sweedler's notation. The iterated restricted coproduct writes in terms of ordered partitions of $U$ into $n$ blocks:
\begin{equation}
\label{iter-coprod}
\wt\Delta_{\makebox{{\tiny{CK}}}}^{n-1}(u)=\sum_{V_1\amalg\cdots\amalg V_n=U \atop V_n<\cdots <V_1,\,
  V_j\not =\emptyset}v_1\otimes\cdots\otimes v_n,
\end{equation}
and we get the full iterated coproduct
$\Delta_{\makebox{{\tiny{CK}}}}^{n-1}(u)$ by allowing empty blocks in the
formula above. Coassociativity of the coproduct follows immediately.\\

Note however that the relation $<$ on subsets of vertices is not transitive. The notation $V_n<\cdots <V_1$ is to be understood as $V_i<V_j$ for any $i>j,\  i,j\in\{1,\ldots ,n\}$.
\subsection{Connected filtered bialgebras}
A {\sl filtered bialgebra\/} on $k$ is a $k$-vector space together with an increasing $\N$-indexed filtration:
$$\Cal H^0\subset\Cal H^1\subset\cdots\subset \Cal H^n\subset\cdots, \bigcup_n \Cal H^n=\Cal H$$
endowed with a product $m:\Cal H\otimes \Cal H\to\Cal H$, a coproduct
$\Delta:\Cal H\to\Cal H\otimes \Cal H$, a unit $u:k\to\Cal H$,
a co-unit $\varepsilon:\Cal H\to k$ and an antipode $S:\Cal H\to\Cal H$
fulfilling the usual axioms of a bialgebra, and such that:
\begin{eqnarray}
\Cal H^p.\Cal H^q	&\subset& \Cal H^{p+q}	\\
		\Delta(\Cal H^n)	&\subset& \sum_{p+q=n}\Cal H^p\otimes\Cal H^q.
\end{eqnarray}
It is easy (and left to the reader) to show that the unit $u$ and the co-unit $\varepsilon$ are of degree zero, if we consider the filtration on the base field $k$ given by $k^0=\{0\}$ and $k^n=k$ for any $n\ge 1$. Namely, $u(k^n)\subset \Cal H^n$ and $\varepsilon(\Cal H^n)\subset k^n$ for any $n\ge 0$.\\

If we ask for the existence of an antipode $S$ we get the
definition of a {\sl filtered Hopf algebra\/} if the antipode is of degree zero i.e. if:
\begin{equation}
S(\Cal H^n)\subset \Cal H^n
\end{equation}
for any $n\ge 0$. Contrarily to the graded case, it is not likely that a filtered bialgebra with antipode is automatically a filtered Hopf algebra (see e.g. \cite[Lemma 5.2.8]{Mo93}, \cite{AS02} and \cite{AC11}). The antipode is however of degree zero in the connected case:\\

For any $x\in\Cal H$ we set:
\begin{equation}
|x|:=\mop{min}\{n\in\N,\ x\in\Cal H^n\}.
\end{equation}
Any graded bialgebra or Hopf algebra is obviously filtered by the canonical
filtration associated to the grading:
\begin{equation}
\Cal H^n:=\bigoplus_{i=0}^n \Cal H_i,
\end{equation}
and in that case, if $x$ is a homogeneous element, $x$ is of degree $n$ if
and only if $|x|=n$. We say that the filtered bialgebra $\Cal H$ is connected
if $\Cal H^0$ is one-dimensional. There is an analogue of proposition \ref{coproduit-connexe}
in the connected filtered case, the proof of which is very similar:
\begin{prop}
For any $x\in\Cal H^n, n\ge 1$ we can write:
\begin{equation}
\Delta x=x\otimes\un+\un\otimes x+\wt\Delta x,\hskip 15mm 
\wt\Delta x\in\sum_{p+q=n,\, p\not= 0,\, q\not= 0}\Cal H^p\otimes\Cal H^q.
\end{equation}
The map $\wt\Delta$ is coassociative on $\mop{Ker}\varepsilon$ and $\wt\Delta_k=(I^{\otimes k-1}\otimes\wt\Delta)(I^{\otimes k-2}\otimes\wt\Delta)...\wt\Delta$ sends $\Cal H^n$ into $(\Cal H^{n-k})^{\otimes k+1}$.
\end{prop}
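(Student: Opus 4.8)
The plan is to follow the proof of Proposition \ref{coproduit-connexe} line by line, simply replacing the grading by the filtration and replacing the extraction of homogeneous components (no longer available) by the canonical splitting $\Cal H=k\un\oplus\mop{Ker}\varepsilon$ furnished by connectedness. Here the splitting is genuine because $\varepsilon(\un)=1$, so $\un\notin\mop{Ker}\varepsilon$, and $\mop{Ker}\varepsilon$ has codimension one.

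First I would establish the primitive decomposition. Working with $x\in\mop{Ker}\varepsilon\cap\Cal H^n$ (to which the general case reduces, since $\wt\Delta$ is only meant to live on $\mop{Ker}\varepsilon$), I decompose $\Delta x$ along the four-fold splitting $\Cal H\otimes\Cal H=(k\un\otimes k\un)\oplus(k\un\otimes\mop{Ker}\varepsilon)\oplus(\mop{Ker}\varepsilon\otimes k\un)\oplus(\mop{Ker}\varepsilon\otimes\mop{Ker}\varepsilon)$, writing $\Delta x=\alpha\,\un\otimes\un+\un\otimes u+v\otimes\un+w$. Applying $\varepsilon\otimes\varepsilon$ and using $(\varepsilon\otimes\varepsilon)\circ\Delta=\varepsilon$ gives $\alpha=\varepsilon(x)=0$; applying $\varepsilon\otimes I$ and $I\otimes\varepsilon$ together with the co-unit axiom (and $k\otimes\Cal H\cong\Cal H\cong\Cal H\otimes k$) gives $u=v=x$. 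Hence $\Delta x=x\otimes\un+\un\otimes x+\wt\Delta x$ with $\wt\Delta x:=w\in\mop{Ker}\varepsilon\otimes\mop{Ker}\varepsilon$. This part is verbatim the graded argument.

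The one genuinely new point, and the only real obstacle, is the filtration estimate on $\wt\Delta x$, precisely because we have no homogeneous pieces to read off. Set $K^p:=\mop{Ker}\varepsilon\cap\Cal H^p$, so $\Cal H^p=k\un\oplus K^p$, and note the decisive fact that connectedness forces $K^0=\{0\}$. Expanding $(\Cal H\otimes\Cal H)^n=\sum_{p+q=n}\Cal H^p\otimes\Cal H^q$ through $\Cal H^p=k\un\oplus K^p$ and collecting terms inside each of the four fixed summands above, the parts $\sum_{p+q=n}\un\otimes K^q$ and $\sum_{p+q=n}K^p\otimes\un$ telescope to $\un\otimes K^n$ and $K^n\otimes\un$, while $\sum_{p+q=n}K^p\otimes K^q$ collapses to $\sum_{p+q=n,\,p,q\ge1}K^p\otimes K^q$ exactly because $K^0=\{0\}$ annihilates the $p=0$ and $q=0$ contributions. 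Thus $(\Cal H\otimes\Cal H)^n$ decomposes as a direct sum whose $\mop{Ker}\varepsilon\otimes\mop{Ker}\varepsilon$-component is $\sum_{p+q=n,\,p,q\ge1}K^p\otimes K^q$. Since $\Delta x\in(\Cal H\otimes\Cal H)^n$ by the filtered bialgebra axiom and $\wt\Delta x$ is its component in $\mop{Ker}\varepsilon\otimes\mop{Ker}\varepsilon$, we obtain $\wt\Delta x\in\sum_{p+q=n,\,p,q\ge1}\Cal H^p\otimes\Cal H^q$, as claimed.

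Coassociativity of $\wt\Delta$ on $\mop{Ker}\varepsilon$ then follows from the same purely formal computation as in the graded case: expanding $(\Delta\otimes I)\Delta x$ and $(I\otimes\Delta)\Delta x$ yields identical lower terms plus $(\wt\Delta\otimes I)\wt\Delta x$ and $(I\otimes\wt\Delta)\wt\Delta x$ respectively, so coassociativity of $\Delta$ forces that of $\wt\Delta$; this step uses neither grading nor filtration and transfers unchanged. Finally the iterated bound is an induction on $k$: the case $k=1$ is the estimate just proved, since $p,q\ge1$ and $p+q=n$ give $\Cal H^p\otimes\Cal H^q\subset(\Cal H^{n-1})^{\otimes2}$; and if $\wt\Delta_{k-1}(\Cal H^n)\subset\sum_{p_1+\cdots+p_k=n,\,p_i\ge1}\Cal H^{p_1}\otimes\cdots\otimes\Cal H^{p_k}$, then applying $I^{\otimes k-1}\otimes\wt\Delta$ and splitting the last factor produces $k+1$ tensor legs of degree $\ge1$ summing to $n$. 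Each leg then has degree at most $n-k$, whence $\wt\Delta_k(\Cal H^n)\subset(\Cal H^{n-k})^{\otimes k+1}$.
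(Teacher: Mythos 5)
Your proof is correct and is essentially the paper's own approach: the paper in fact omits the proof entirely, stating only that it is ``very similar'' to the graded Proposition \ref{coproduit-connexe}, and your argument is exactly that adaptation carried out step by step. The one detail you supply that the paper leaves implicit --- replacing extraction of homogeneous components by the splitting $\Cal H=k\un\oplus\mop{Ker}\varepsilon$ together with $K^0=\mop{Ker}\varepsilon\cap\Cal H^0=\{0\}$, which yields $\wt\Delta x\in\sum_{p+q=n,\,p\not=0,\,q\not=0}\Cal H^p\otimes\Cal H^q$ --- is handled correctly.
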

As an easy corollary, the degree of the antipode is also zero in the connected case, i.e. $S(\Cal H^n)\subseteq \Cal H^n$ for any $n$. This is an immediate consequence of the recursive formulae \eqref{antipode1} and \eqref{antipode2} below. 
\subsection{The convolution product}\label{convolution}
An important result is that any connected filtered bialgebra is indeed a
filtered Hopf algebra, in the sense that the antipode comes for free. We give
a proof of this fact as well as a recursive formula for the antipode with the
help of the {\sl convolution product\/}: let $\Cal H$ be a (connected filtered) bialgebra, and let $\Cal A$ be any
$k$-algebra (which will be called the {\sl target algebra\/}): the
convolution product on $\Cal L(\Cal H,\Cal A)$ is given by:
\begin{eqnarray*}
\varphi*\psi (x)	&=&m_{\Cal A}(\varphi\otimes\psi)\Delta(x)\\
				&=&\sum_{(x)}\varphi(x_1)\psi(x_2).
\end{eqnarray*}
\begin{prop}\label{groupeG}
The map $e=u_{\Cal A}\circ\varepsilon$, given by $e(\un)=\un_{\Cal A}$ and $e(x)=0$ for any $x\in\mop{Ker}\varepsilon$, is a unit for the convolution product. Moreover the set $G(\Cal A):=\{\varphi\in\Cal L(\Cal H,\Cal A),\ \varphi(\un)=\un_{\Cal A}\}$ endowed with the convolution product is a group.
\end{prop}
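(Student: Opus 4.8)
The plan is to verify the three defining properties of a group for $(G(\Cal A),*)$, after first checking that $e=u_{\Cal A}\circ\varepsilon$ is a two-sided convolution unit. For the unit property I would compute, for arbitrary $\varphi\in\Cal L(\Cal H,\Cal A)$ and $x\in\Cal H$,
$$e*\varphi(x)=\sum_{(x)}e(x_1)\varphi(x_2)=\sum_{(x)}\varepsilon(x_1)\un_{\Cal A}\varphi(x_2)=\varphi\Big(\sum_{(x)}\varepsilon(x_1)x_2\Big)=\varphi(x),$$
the last equality being the co-unit axiom $(\varepsilon\otimes I)\circ\Delta=\mop{Id}$; the identity $\varphi*e=\varphi$ follows symmetrically from $(I\otimes\varepsilon)\circ\Delta=\mop{Id}$. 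This step is routine.

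Next I would record that $G(\Cal A)$ is stable under $*$ and contains $e$. Stability uses $\Delta(\un)=\un\otimes\un$: for $\varphi,\psi\in G(\Cal A)$ one gets $\varphi*\psi(\un)=\varphi(\un)\psi(\un)=\un_{\Cal A}\un_{\Cal A}=\un_{\Cal A}$. Membership of the unit is immediate since $e(\un)=u_{\Cal A}(\varepsilon(\un))=\un_{\Cal A}$. Associativity of $*$ has already been established. So the only real content is the existence of inverses.

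The hard part is constructing, for each $\varphi\in G(\Cal A)$, a convolution inverse lying again in $G(\Cal A)$, and this is exactly where connectedness enters. I would build a right inverse $\psi$ by induction on the filtration degree. On $\Cal H^0=k\un$ I set $\psi(\un)=\un_{\Cal A}$, which already forces $\psi\in G(\Cal A)$. For $x\in\mop{Ker}\varepsilon$ with $|x|=n\ge 1$, I use the connected-filtered form of the coproduct from the preceding proposition, $\Delta x=x\otimes\un+\un\otimes x+\wt\Delta x$ with $\wt\Delta x=\sum_{(x)}x'\otimes x''$ and $x',x''$ of strictly smaller degree, to expand the equation $\varphi*\psi(x)=e(x)=0$ as
$$\varphi(x)\un_{\Cal A}+\un_{\Cal A}\,\psi(x)+\sum_{(x)}\varphi(x')\psi(x'')=0,$$
and solve for the unknown, $\psi(x)=-\varphi(x)-\sum_{(x)}\varphi(x')\psi(x'')$. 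Because every $x''$ appearing has degree at most $n-1$, the right-hand side only involves values of $\psi$ already defined, so the recursion is well-posed; this is the crux, and it would fail without connectedness, since the induction must terminate at the one-dimensional $\Cal H^0$.

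Finally I would produce a left inverse $\psi'$ by the mirror recursion $\psi'(\un)=\un_{\Cal A}$, $\psi'(x)=-\varphi(x)-\sum_{(x)}\psi'(x')\varphi(x'')$ (now the terms $x'$ have degree $\le n-1$), and then invoke associativity to identify the two: $\psi'=\psi'*e=\psi'*(\varphi*\psi)=(\psi'*\varphi)*\psi=e*\psi=\psi$. Hence $\psi=\psi'$ is a two-sided inverse of $\varphi$ in $G(\Cal A)$, completing the proof that $(G(\Cal A),*)$ is a group.
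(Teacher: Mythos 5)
Your proof is correct, but it takes a genuinely different route from the paper's. The paper inverts $\varphi$ by a closed formula: writing $\varphi=e-(e-\varphi)$, it sets $\varphi^{*-1}=\sum_{k\ge 0}(e-\varphi)^{*k}$ and observes that this formal geometric series is locally finite, because $(e-\varphi)$ kills $\un$ and hence $(e-\varphi)^{*k}(x)=(-1)^k m_{\Cal A,k-1}(\varphi\otimes\cdots\otimes\varphi)\wt\Delta_{k-1}(x)$ vanishes for $k\ge n+1$ when $x\in\Cal H^n$, by the property of the reduced coproduct $\wt\Delta_{k-1}$ on a connected filtered bialgebra. You instead solve $\varphi*\psi=e$ recursively, degree by degree along the filtration, getting $\psi(x)=-\varphi(x)-\sum_{(x)}\varphi(x')\psi(x'')$, and then handle two-sidedness by running the mirror recursion for a left inverse and invoking associativity to identify the two. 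Both arguments use connectedness in exactly the same place (the reduced coproduct strictly lowers the filtration degree), so they are equally rigorous; what differs is what each buys. The paper's series gives a non-recursive formula for the inverse in which two-sidedness is automatic (it is the standard Neumann-series inverse in the associative convolution algebra), and it feeds directly into the antipode formula \eqref{eq:antipode}. Your recursion is more elementary, makes the role of connectedness maximally transparent, and in fact produces, for $\varphi=I$, precisely the recursive antipode formulas \eqref{antipode1} and \eqref{antipode2} that the paper derives separately in Corollary \ref{antipode}; the small price is the extra left-inverse/associativity step, which the paper's closed formula avoids. Your verification of the unit property and of the stability of $G(\Cal A)$ under convolution (via $\Delta(\un)=\un\otimes\un$) fills in details the paper dismisses as straightforward.
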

\begin{proof}
The first statement is straightforward. To prove the second let us consider
the formal series:
\begin{eqnarray*}
\varphi^{*-1}(x)	&=&\big(e-(e-\varphi)\big)^{*-1}(x)	\\
				&=&\sum_{k\ge 0}(e-\varphi)^{*k}(x).
\end{eqnarray*}
Using $(e-\varphi)(\un)=0$ we have immediately $(e-\varphi)^{*k}(\un)=0$, and
for any $x\in\mop{Ker}\varepsilon$:
\begin{equation}
(e-\varphi)^{*k}(x)=(-1)^km_{\Cal
  A,k-1}(\underbrace{\varphi\otimes\cdots\otimes\varphi}_{k\hbox{ \sevenrm times }})\wt\Delta_{k-1}(x).
\end{equation}
When $x\in\Cal H^n$ this expression vanishes then for $k\ge n+1$. The formal series ends up then with a finite number of terms for any $x$, which proves the result.
\end{proof}
\begin{cor}\label{antipode}
Any connected filtered bialgebra $\Cal H$ is a filtered Hopf algebra. The
antipode is defined by:
\begin{equation}\label{eq:antipode}
S(x)=\sum_{k\ge 0}(u\varepsilon-I)^{*k}(x).
\end{equation}
It is given by $S(\un)=\un$ and recursively by any of the two formulae for
$x\in\mop{Ker}\varepsilon$:
\begin{eqnarray}
S(x)	&=-x-\sum_{(x)}S(x')x''\label{antipode1}	\\
		S(x)	&=-x-\sum_{(x)}x'S(x'').\label{antipode2}
\end{eqnarray}
\end{cor}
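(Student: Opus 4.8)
The plan is to obtain the antipode as the convolution inverse of the identity map, exploiting Proposition \ref{groupeG} with the target algebra chosen to be $\Cal H$ itself. As already observed after the definition of a Hopf algebra, the antipode $S$ is by its very axiom nothing but the two-sided inverse of $I=\mop{Id}_{\Cal H}$ for the convolution product on $\Cal L(\Cal H,\Cal H)$. So first I would set $\Cal A=\Cal H$ and note that $I$ lies in the group $G(\Cal H)$, since $I(\un)=\un=\un_{\Cal H}$. Proposition \ref{groupeG} then guarantees that $I$ admits a convolution inverse $S:=I^{*-1}$ and, reading its proof, provides the explicit formula
$$S(x)=\sum_{k\ge 0}(e-I)^{*k}(x)=\sum_{k\ge 0}(u\varepsilon-I)^{*k}(x),$$
the convolution unit being $e=u_{\Cal H}\circ\varepsilon=u\varepsilon$; this is precisely \eqref{eq:antipode}. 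For each $x\in\Cal H^n$ the series is a finite sum, by the vanishing of $(e-I)^{*k}(x)$ for $k\ge n+1$ established there. Since $S*I=I*S=e=u\circ\varepsilon$, the map $S$ satisfies exactly the antipode axiom $m\circ(S\otimes I)\circ\Delta=m\circ(I\otimes S)\circ\Delta=u\circ\varepsilon$, so that $\Cal H$ is a Hopf algebra.

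Next I would derive the two recursive formulae. From the series one reads off $S(\un)=e(\un)=\un$. For $x\in\mop{Ker}\varepsilon$ I would use the connected filtered decomposition $\Delta x=x\otimes\un+\un\otimes x+\wt\Delta x$ with $\wt\Delta x=\sum_{(x)}x'\otimes x''$, together with $S*I(x)=\varepsilon(x)\un=0$:
$$0=\sum_{(x)}S(x_1)x_2=S(x)\un+S(\un)x+\sum_{(x)}S(x')x''=S(x)+x+\sum_{(x)}S(x')x'',$$
which rearranges to \eqref{antipode1}. The symmetric computation starting from $I*S(x)=0$ yields \eqref{antipode2}.

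Finally, the assertion that $S$ is of degree zero, $S(\Cal H^n)\subseteq\Cal H^n$, follows by induction on $n$ from \eqref{antipode1}; by linearity and the splitting $x=\varepsilon(x)\un+(x-\varepsilon(x)\un)$ it suffices to treat $x\in\Cal H^n\cap\mop{Ker}\varepsilon$. The base case $n=0$ is $S(\un)=\un$. For the inductive step, the terms $x',x''$ occurring in $\wt\Delta x$ satisfy $|x'|,|x''|\ge 1$ and $|x'|+|x''|=n$, hence $|x'|\le n-1$; the induction hypothesis gives $S(x')\in\Cal H^{|x'|}$, so $S(x')x''\in\Cal H^{|x'|+|x''|}\subseteq\Cal H^n$, and therefore $S(x)=-x-\sum_{(x)}S(x')x''\in\Cal H^n$.

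I expect no serious obstacle: once Proposition \ref{groupeG} is available, the existence and the explicit form of $S$ are immediate, and the only point requiring a little care is that the recursion be well-founded. This rests entirely on the fact that $\wt\Delta$ strictly lowers the filtration degree, so that the induction simultaneously defining $S$ through \eqref{antipode1} and establishing $S(\Cal H^n)\subseteq\Cal H^n$ genuinely closes.
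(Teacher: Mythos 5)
Your proof is correct and follows essentially the same route as the paper: apply Proposition \ref{groupeG} with $\Cal A=\Cal H$ to obtain $S=I^{*-1}$ as the convolution series \eqref{eq:antipode}, then read off the recursions \eqref{antipode1}--\eqref{antipode2} from $S*I(x)=I*S(x)=0$ on $\mop{Ker}\varepsilon$ using the connected decomposition of $\Delta$. Your closing induction showing $S(\Cal H^n)\subseteq\Cal H^n$ is a welcome explicit treatment of a point the paper only disposes of in a remark preceding the corollary (namely that degree zero of the antipode is an immediate consequence of the recursive formulae), and it is needed since the paper's definition of a filtered Hopf algebra requires the antipode to be of degree zero.
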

\begin{proof}
The antipode, when it exists, is the inverse of the identity for the convolution product on $\Cal L(\Cal H,\Cal H)$. One just needs then to apply Proposition \ref{groupeG} with $\Cal A=\Cal H$. The two recursive formulas come directly from the two equalities:
$$m(S\otimes I)\Delta (x)=m(I\otimes S)\Delta (x)=0$$
fulfilled by any $x\in\mop{Ker}\varepsilon$.
\end{proof}
Let $\g g(\Cal A)$ be the subspace of $\Cal L(\Cal H,\Cal A)$ formed by the elements $\alpha$ such that $\alpha(\un)=0$. It is clearly a subalgebra of $\Cal L(\Cal H,\Cal A)$ for the convolution product. We have:
$$G(\Cal A)=e+\g g(\Cal A).$$
From now on we shall suppose that the ground field $k$ is of characteristic
zero. For any $x\in\Cal H^n$ the exponential:
\begin{equation}
e^{*\alpha}(x)=\sum_{k\ge 0}{\alpha^{*k}(x)\over k!}
\end{equation}
is a finite sum (ending up at $k=n$). It is a bijection from $\g g(\Cal A)$
onto $G(\Cal A)$. Its inverse is given by:
\begin{equation}
\mop{Log}(1+\alpha)(x)=\sum_{k\ge 1}{(-1)^{k-1}\over k}\alpha^{*k}(x).
\end{equation}
This sum again ends up at $k=n$ for any $x\in\Cal H^n$. Let us introduce a
decreasing filtration on $\Cal L=\Cal L(\Cal H,\Cal A)$:
\begin{equation}
\Cal L^n:=\{\alpha\in\Cal L, \ \alpha\restr{\Cal H^{n-1}}=0\}.
\end{equation}
Clearly $\Cal L_0=\Cal L$ and $\Cal L_1=\g g(\Cal A)$. We define the valuation
$\mop{val}\varphi$ of an element $\varphi$ of $\Cal L$ as the greatest integer
$k$ such that $\varphi$ is in $\Cal L_k$. We shall consider in the sequel the
ultrametric distance on $\Cal L$ induced by the filtration:
\begin{equation}\label{distance}
d(\varphi,\psi)=2^{-\smop{val}(\varphi-\psi)}.
\end{equation}
For any $\alpha,\beta\in\g g(\Cal A)$ let $[\alpha,\beta]=\alpha*\beta-\beta*\alpha$.
\begin{prop}
We have the inclusion:
\begin{equation}
\Cal L^p*\Cal L^q\subset\Cal L^{p+q},
\end{equation}
and moreover the metric space $\Cal L$ endowed with the distance defined by \eqref{distance} is complete.
\end{prop}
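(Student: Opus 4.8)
The statement splits into two independent assertions, and I would handle them in turn.

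For the inclusion $\Cal L^p*\Cal L^q\subset\Cal L^{p+q}$, the plan is to take $\varphi\in\Cal L^p$ and $\psi\in\Cal L^q$, evaluate $\varphi*\psi$ on an arbitrary $x\in\Cal H^{p+q-1}$, and show the result is zero. Writing $(\varphi*\psi)(x)=\sum_{(x)}\varphi(x_1)\psi(x_2)$ in Sweedler notation, I would invoke the filtered-bialgebra axiom $\Delta(\Cal H^{p+q-1})\subset\sum_{a+b=p+q-1}\Cal H^a\otimes\Cal H^b$ to arrange the coproduct so that in each summand the two factors lie in $\Cal H^a$ and $\Cal H^b$ with $a+b\le p+q-1$. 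The crux is a pigeonhole observation: since $a+b<p+q$, one cannot have both $a\ge p$ and $b\ge q$, so in every term either $x_1\in\Cal H^{p-1}$ or $x_2\in\Cal H^{q-1}$. In the first case $\varphi(x_1)=0$ because $\varphi\in\Cal L^p$ vanishes on $\Cal H^{p-1}$; in the second $\psi(x_2)=0$ for the analogous reason. Every summand dies, so $(\varphi*\psi)\restr{\Cal H^{p+q-1}}=0$, which is precisely $\varphi*\psi\in\Cal L^{p+q}$.

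For completeness I would first record the two elementary facts making $d$ a genuine ultrametric. The subspaces $\Cal L^n$ are decreasing, so $\mop{val}(\varphi+\psi)\ge\min\bigl(\mop{val}\varphi,\mop{val}\psi\bigr)$, which yields the strong triangle inequality $d(\varphi,\chi)\le\max\bigl(d(\varphi,\psi),d(\psi,\chi)\bigr)$; and $\bigcap_n\Cal L^n=\{0\}$ because $\bigcup_n\Cal H^n=\Cal H$, so $\mop{val}(\varphi)=\infty$ forces $\varphi=0$ and $d$ separates points. I would then reinterpret the Cauchy condition through the valuation: a sequence $(\varphi_n)$ is Cauchy exactly when, for each $k$, there is an $N$ with $\varphi_m-\varphi_n\in\Cal L^k$ for all $m,n\ge N$, that is, $\varphi_m$ and $\varphi_n$ agree on $\Cal H^{k-1}$.

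The limit is then built pointwise. Fixing $x$ and choosing $k$ with $x\in\Cal H^{k-1}$, the above shows the values $\varphi_n(x)\in\Cal A$ are eventually constant, and I define $\varphi(x)$ to be that eventual value. The one genuinely conceptual point — the step I expect to require the most care — sits here: no topology is assumed on the target algebra $\Cal A$, so I cannot appeal to $\lim\varphi_n(x)$ in $\Cal A$; it is the ultrametric structure of $\Cal L$ that rescues the construction, forcing $(\varphi_n(x))_n$ to be eventually \emph{stationary} rather than merely convergent. Linearity of $\varphi$ then follows by evaluating at $x$, $y$, and $x+\lambda y$ for $n$ large enough (all three lie in a common $\Cal H^{k-1}$) and using linearity of each $\varphi_n$.

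Finally I would verify $\varphi_n\to\varphi$. Given $k$ with associated $N$, for every $x\in\Cal H^{k-1}$ and every $n\ge N$ the eventual-constancy argument gives $\varphi_n(x)=\varphi(x)$, so $\varphi_n-\varphi\in\Cal L^k$ and hence $d(\varphi_n,\varphi)\le 2^{-k}$ for all $n\ge N$. Letting $k\to\infty$ shows $d(\varphi_n,\varphi)\to 0$, so the Cauchy sequence converges in $\Cal L$ and the space is complete. Both parts are thus routine once the definitions are unwound; the only delicate point is the topology-free construction of the pointwise limit described above.
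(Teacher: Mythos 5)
Your proof is correct and follows essentially the same route as the paper's: the inclusion $\Cal L^p*\Cal L^q\subset\Cal L^{p+q}$ via the degree pigeonhole argument on the Sweedler components of $\Delta x$, and completeness via the observation that a Cauchy sequence is locally (eventually) stationary, so the limit can be defined pointwise without any topology on $\Cal A$. You simply spell out details the paper leaves implicit (the ultrametric axioms, linearity of the limit, and the final convergence check), which is fine.
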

\begin{proof}
Take any $x\in\Cal H^{p+q-1}$, and any $\alpha\in\Cal L_p$ and $\beta\in\Cal L_q$. We have:
$$(\alpha*\beta)(x)=\sum_{(x)}\alpha(x_1)\beta(x_2).$$
Recall that we denote by $|x|$ the minimal $n$ such that $x\in\Cal H^n$. Since $|x_1|+|x_2|=|x|\le p+q-1$, either $|x_1|\le p-1$ or $|x_2|\le q-1$, so the expression vanishes. Now if $(\psi_n)$ is a Cauchy sequence in $\Cal L$ it is immediate to see that this sequence is {\sl locally stationary\/}, i.e. for any $x\in\Cal H$ there exists $N(x)\in\N$ such that $\psi_n(x)=\psi_{N(x)}(x)$ for any $n\ge N(x)$. Then the limit of $(\psi_n)$ exists and is clearly defined by:
$$\psi(x)=\psi_{N(x)}(x).$$  
\end{proof}
As a corollary the Lie algebra $\Cal L_1=\g g(\Cal A)$ is {\sl pro-nilpotent},
in a sense that it is the projective limit of the Lie algebras $\g g(\Cal
A)/\Cal L^n$, which are nilpotent.
\subsection{Characters}\label{sect:characters}
Let $\Cal H$ be a connected filtered Hopf algebra over $k$, and let $\Cal A$ be a $k$-algebra. We shall consider unital algebra morphisms from $\Cal H$ to the target algebra $\Cal A$. When the algebra $\Cal A$ is commutative we shall call them slightly abusively {\sl characters\/}. We recover of course the usual notion of character when the algebra $\Cal A$ is the ground field $k$.
\\

The notion of character involves only the algebra structure of
$\Cal H$. On the other hand the convolution product on $\Cal L(\Cal H,\Cal A)$
involves only the {\sl coalgebra\/} structure on $\Cal H$. Let us consider now
the full Hopf algebra structure on $\Cal H$ and see what happens to algebra
morphisms with the convolution product:
\begin{prop}\label{prop:convolution2}
Let $\Cal H$ be any Hopf algebra over $k$, and let $\Cal A$
be a \textsl{commutative} $k$-algebra. Then the characters from $\Cal
  H$ to $\Cal A$ form a group $G_1(\Cal A)$ under the convolution product, and
  for any $\varphi\in G_1(\Cal A)$ the inverse is given by:
\begin{equation}
\varphi^{*-1}=\varphi\circ S.
\end{equation}
\end{prop}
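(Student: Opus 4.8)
The plan is to verify the three group axioms in turn, checking at each stage that the objects produced are genuinely characters (unital algebra morphisms into the commutative target $\Cal A$), and to pin down exactly where commutativity of $\Cal A$ is used. Throughout I would work in Sweedler's notation and exploit two facts: that the coproduct $\Delta$ is an algebra morphism, and that characters are algebra morphisms. First I would establish closure under the convolution product. Given characters $\varphi,\psi$ and elements $x,y\in\Cal H$, I would write $\Delta(xy)=\Delta(x)\Delta(y)=\sum x_1y_1\otimes x_2y_2$ and apply $\varphi,\psi$ as algebra morphisms to get $\varphi*\psi(xy)=\sum\varphi(x_1)\varphi(y_1)\psi(x_2)\psi(y_2)$, whereas $(\varphi*\psi)(x)\,(\varphi*\psi)(y)=\sum\varphi(x_1)\psi(x_2)\varphi(y_1)\psi(y_2)$. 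These two expressions coincide precisely once $\psi(x_2)$ and $\varphi(y_1)$ are commuted inside $\Cal A$, which is the first place commutativity enters. Combined with $\varphi*\psi(\un)=\varphi(\un)\psi(\un)=\un_{\Cal A}$, obtained from $\Delta\un=\un\otimes\un$, this shows $\varphi*\psi\in G_1(\Cal A)$.

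Next I would record that the neutral element is $e=u_{\Cal A}\circ\varepsilon$. It is a character because $\varepsilon$ is an algebra morphism, and it is the convolution unit by the counit axiom exactly as in Proposition \ref{groupeG}: the identity $\varphi*e=\varphi=e*\varphi$ uses only the coalgebra structure and requires no connectedness hypothesis. For the inverse I would then verify that $\varphi\circ S$ does the job. Using that $\varphi$ is an algebra morphism and the antipode axiom, $\varphi*(\varphi\circ S)(x)=\sum\varphi(x_1)\varphi(S(x_2))=\varphi\bigl(\sum x_1S(x_2)\bigr)=\varphi\bigl(m(I\otimes S)\Delta(x)\bigr)=\varphi(u\varepsilon(x))=\varepsilon(x)\un_{\Cal A}=e(x)$, where $\varphi(\un)=\un_{\Cal A}$ was used; the symmetric computation with $m(S\otimes I)\Delta$ yields $(\varphi\circ S)*\varphi=e$. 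It remains to confirm that $\varphi\circ S$ is itself a character, so that the inverse lands in $G_1(\Cal A)$: here I would invoke Proposition \ref{hopf-gen}, which gives that $S$ is an algebra antimorphism and that $S\circ u=u$. Then $\varphi\circ S(xy)=\varphi(S(y))\varphi(S(x))$, and commutativity of $\Cal A$ again lets me reorder this to $(\varphi\circ S)(x)(\varphi\circ S)(y)$, while $S\circ u=u$ gives $\varphi\circ S(\un)=\un_{\Cal A}$.

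I expect the closure step to be the main obstacle, in the sense that it is the one genuinely requiring both that $\Delta$ is an algebra morphism and that $\Cal A$ is commutative; the same two inputs resurface when checking that $\varphi\circ S$ is a character. By contrast, the inverse formula itself falls out cleanly from the antipode axioms once the convolution unit $e$ has been identified, and the only point demanding care there is remembering to confirm that $\varphi\circ S$ is again a character rather than merely a convolution inverse.
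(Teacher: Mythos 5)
Your proof is correct and follows essentially the same route as the paper: closure of $G_1(\Cal A)$ under convolution via the fact that $\Delta$ is an algebra morphism together with commutativity of $\Cal A$, the unit $e=u_{\Cal A}\circ\varepsilon$, and the antipode axiom combined with the character property of $\varphi$ to get $\varphi*(\varphi\circ S)=e=(\varphi\circ S)*\varphi$. Your explicit verification that $\varphi\circ S$ is itself a character, using Proposition \ref{hopf-gen} ($S$ is an algebra antimorphism and $S\circ u=u$) plus commutativity of $\Cal A$, is a point the paper leaves implicit, and it is a worthwhile addition since the inverse must land in the set of characters for the group structure to close.
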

\begin{proof}
Using the fact that $\Delta$ is an algebra morphism we have for any $x,y\in\Cal H$:
$$f*g(xy)=\sum_{(x)(y)}f(x_1y_1)g(x_2y_2).$$
If $\Cal A$ is commutative and if $f$ and $g$ are characters we get:
\begin{eqnarray*}
f*g(xy)	&=&\sum_{(x)(y)}f(x_1)f(y_1)g(x_2)g(y_2)	\\
			&=&\sum_{(x)(y)}f(x_1)g(x_2)f(y_1)g(y_2)	\\
			&=&(f*g)(x)(f*g)(y).
\end{eqnarray*}
The unit $e=u_{\Cal A}\circ\varepsilon$ is an algebra morphism. The formula for the inverse of a character comes easily from the commutativity of the following diagram:
\diagramme{
\xymatrix{&\Cal H\otimes\Cal H	\ar[rr]^{S\otimes I}
				&&\Cal H\otimes \Cal H\ar[dr]^{m}\ar[rr]^{f\otimes f}	&& \Cal\otimes\Cal A	\ar[dr]^{m_{\Cal A}}	&\\
\Cal H\ar[rr]_\varepsilon \ar[dr]^\Delta \ar[ur]^\Delta
	\ar@/_4.5pc/	@{-->}[rrrrrr]_{f*(f\circ S)}
	\ar@/^4.5pc/	@{-->}[rrrrrr]^{(f\circ S)*f}
	\ar@/^1pc/	@{-->}[rrrrrr]_e
				&&	k\ar[rr]_u	&&\Cal H\ar[rr]_f
	&&	\Cal A	\\
&\Cal H\otimes\Cal H	\ar[rr]^{I\otimes S}
				&&\Cal H\otimes \Cal H\ar[ur]^{m}
\ar[rr]^{f\otimes f}	&&\Cal A\otimes\Cal A\ar[ur]^{m_\Cal A}	&}
}
\end{proof}
We call {\sl  infinitesimal characters with values in the algebra $\Cal A$\/} those elements $\alpha$ of $\Cal L(\Cal H,\Cal A)$ such that:
$$\alpha(xy)=e(x)\alpha(y)+\alpha(x)e(y).$$
\begin{prop}\label{prop:exp}
Let $\Cal H$ be a connected filtered Hopf algebra, and suppose that $\Cal A$ is a commutative algebra.
Let $G_1(\Cal A)$ (resp. $\g g_1(\Cal A)$) be the set of characters of $\Cal H$ with values in
$\Cal A$ (resp the set of infinitesimal characters of $\Cal H$ with values in
$\Cal A$). Then $G_1(\Cal A)$ is a subgroup of $G$, the exponential restricts to a
bijection from $\g g_1(\Cal A)$ onto $G_1(\Cal A)$, and $\g g_1(\Cal A)$ is a Lie subalgebra of $\g g(\Cal A)$.
\end{prop}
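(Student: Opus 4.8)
The plan is to establish the three assertions in sequence, using the commutativity of $\Cal A$ crucially at each step and the exponential/logarithm bijection from Section~\ref{convolution}. First I would show that $G_1(\Cal A)$ is a subgroup of $G(\Cal A)$. We already know from Proposition~\ref{prop:convolution2} that characters into a commutative algebra are closed under convolution and that each has a convolution inverse $\varphi^{*-1}=\varphi\circ S$; since a character $\varphi$ satisfies $\varphi(\un)=\un_{\Cal A}$, it lies in $G(\Cal A)$, and $\varphi\circ S$ is again a character because $S$ is an algebra antimorphism (Proposition~\ref{hopf-gen}(2)) and $\Cal A$ is commutative. Thus $G_1(\Cal A)$ is a subgroup of the group $G(\Cal A)$ of Proposition~\ref{groupeG}.

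Next I would verify that $\g g_1(\Cal A)$ is a Lie subalgebra of $\g g(\Cal A)$. An infinitesimal character $\alpha$ satisfies $\alpha(\un)=0$ (take $x=y=\un$ in the defining identity), so $\g g_1(\Cal A)\subset\g g(\Cal A)$. The point is to check that if $\alpha,\beta$ are infinitesimal characters then so is $[\alpha,\beta]=\alpha*\beta-\beta*\alpha$. I would expand $(\alpha*\beta)(xy)$ using that $\Delta$ is an algebra morphism, exactly as in the proof of Proposition~\ref{prop:convolution2}, write each $\alpha(x_iy_j)$ via the Leibniz rule, and collect terms; here commutativity of $\Cal A$ lets the $e$-factors and the $\alpha,\beta$-factors be rearranged so that the non-Leibniz cross terms in $\alpha*\beta$ and $\beta*\alpha$ cancel, leaving exactly $e(x)[\alpha,\beta](y)+[\alpha,\beta](x)e(y)$.

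The heart of the statement is that $\exp_*$ restricts to a bijection $\g g_1(\Cal A)\to G_1(\Cal A)$. Since $\exp_*$ is already known to be a bijection from $\g g(\Cal A)$ onto $G(\Cal A)$ with inverse $\mop{Log}$, it suffices to show that $\exp_*$ carries $\g g_1(\Cal A)$ into $G_1(\Cal A)$ and that $\mop{Log}$ carries $G_1(\Cal A)$ into $\g g_1(\Cal A)$. For the forward direction I would prove by induction on $k$ that each convolution power $\alpha^{*k}$ of an infinitesimal character obeys a binomial-type Leibniz identity
\begin{equation*}
\alpha^{*k}(xy)=\sum_{j=0}^{k}\binom{k}{j}\alpha^{*j}(x)\,\alpha^{*(k-j)}(y),
\end{equation*}
which is where commutativity of $\Cal A$ is indispensable; summing against $1/k!$ then gives $e^{*\alpha}(xy)=e^{*\alpha}(x)\,e^{*\alpha}(y)$, so $\exp_*\alpha$ is a character. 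The converse follows symmetrically from the defining series of $\mop{Log}$, or more cleanly by noting that $\exp_*$ and $\mop{Log}$ are mutually inverse and that the image $\exp_*(\g g_1(\Cal A))\subset G_1(\Cal A)$ together with surjectivity of $\exp_*$ onto $G(\Cal A)\supset G_1(\Cal A)$ forces $\mop{Log}(G_1(\Cal A))\subset\g g_1(\Cal A)$. The main obstacle I anticipate is the bookkeeping in this induction: establishing the binomial Leibniz rule for $\alpha^{*k}$ cleanly, which requires carefully tracking how the Leibniz rule for $\alpha$ interacts with coassociativity when passing from $\alpha^{*k}$ to $\alpha^{*(k+1)}=\alpha*\alpha^{*k}$, and invoking commutativity of $\Cal A$ at just the right moment to reorder factors.
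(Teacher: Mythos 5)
Your handling of the first two assertions and of the forward inclusion matches the paper's proof: the subgroup property is exactly what Proposition \ref{prop:convolution2} gives (closure under convolution, inverse $\varphi\circ S$ again a character since $S$ is an algebra antimorphism and $\Cal A$ is commutative); the Lie subalgebra computation via the Leibniz rule and commutativity of $\Cal A$ is the paper's computation verbatim; and the binomial identity $\alpha^{*n}(xy)=\sum_{k=0}^n\binom{n}{k}\alpha^{*k}(x)\alpha^{*(n-k)}(y)$, proved by induction and summed against $1/n!$, is precisely how the paper shows $\exp(\g g_1(\Cal A))\subset G_1(\Cal A)$.

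The genuine gap is in the converse direction, i.e.\ the surjectivity of $\exp$ restricted to $\g g_1(\Cal A)$ onto $G_1(\Cal A)$, equivalently $\mop{Log}\big(G_1(\Cal A)\big)\subset\g g_1(\Cal A)$. Your ``cleaner'' set-theoretic argument is logically invalid: from $\exp\big(\g g_1(\Cal A)\big)\subset G_1(\Cal A)$ and bijectivity of $\exp:\g g(\Cal A)\to G(\Cal A)$ one can only conclude $\g g_1(\Cal A)\subset\mop{Log}\big(G_1(\Cal A)\big)$, which is the wrong inclusion. To run your argument you would need to know that $\exp$ maps $\g g(\Cal A)\setminus\g g_1(\Cal A)$ into $G(\Cal A)\setminus G_1(\Cal A)$ --- that the exponential of a non-infinitesimal character is never a character --- and that is exactly the statement to be proved; in general, a bijection $f:X\to Y$ with $f(X_1)\subset Y_1$ does not satisfy $f^{-1}(Y_1)\subset X_1$. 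Your fallback, that the converse ``follows symmetrically'' from the series for $\mop{Log}$, is also unsubstantiated: writing $\varphi=e+\gamma\in G_1(\Cal A)$, the map $\gamma$ does \emph{not} satisfy the Leibniz rule but the deformed identity $\gamma(xy)=\gamma(x)e(y)+e(x)\gamma(y)+\gamma(x)\gamma(y)$, so computing $\mop{Log}(\varphi)(xy)$ term by term is not a mirror image of the forward induction; the quadratic terms have to cancel across the alternating series, a genuinely different combinatorial argument that you do not carry out. The paper closes this point by a different idea: set $\varphi^{*t}:=\exp\big(t\mop{Log}\varphi\big)$ for $t\in k$; by connectedness all sums are finite on any fixed element, so for fixed $x,y\in\Cal H$ the quantity $\varphi^{*t}(xy)-\varphi^{*t}(x)\varphi^{*t}(y)$ is a polynomial in $t$; it vanishes at every integer $t=n$ since $\varphi^{*n}$ is then the $n$-th convolution power of $\varphi$, a character by Proposition \ref{prop:convolution2}; as $k$ has characteristic zero the polynomial vanishes identically, and differentiating at $t=0$ yields the Leibniz rule for $\mop{Log}\varphi$. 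You need this argument (or a worked-out substitute) to complete the proof.
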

\begin{proof}
Take two infinitesimal characters $\alpha$ and $\beta$ with values in
$\Cal A$ and compute:
\begin{eqnarray*}
(\alpha*\beta)(xy)
	&=&\sum_{(x)(y)}\alpha(x_1y_1)\beta(x_2y_2)	\\
	&=&\sum_{(x)(y)}\big(\alpha(x_1)e(y_1)+e(x_1)\alpha(y_1)\big).
		\big(\beta(x_2)e(y_2)+e(x_2)\alpha(y_2)\big)		\\
	&=&(\alpha*\beta)(x)e(y)+\alpha(x)\beta(y)+\beta(x)\alpha(y)
		+e(x)(\alpha*\beta)(y).
\end{eqnarray*}
Using the commutativity of $\Cal A$ we immediately get:
$$[\alpha,\beta](xy)=[\alpha,\beta](x)e(y)+e(x)[\alpha,\beta](y),$$
which shows that $\g g_1(\Cal A)$ is a Lie algebra. Now for $\alpha\in\g g_1(\Cal A)$ we have:
$$\alpha^{*n}(xy)=\sum_{k=0}^n{n\choose k}\alpha^{*k}(x)\alpha^{*(n-k)}(y),$$
as easily seen by induction on $n$. A straightforward computation then yields:
$$\exp(\alpha)(xy)=\exp(\alpha)(x)\exp(\alpha)(y),$$
with
$$\exp\alpha:=\sum_{k\ge 0} \frac{\alpha^{*k}}{k!}=e+\alpha+\frac{\alpha^{*2}}{2}+\cdots .$$
The series above makes sense thanks to connectedness, as explained in Paragraph \ref{convolution}. Now let $\varphi=e+\gamma\in G_1(\Cal A)$, and let $\log(\varphi)=\sum_{j\ge 1}\frac{(-1)^{j-1}\gamma^{*j}}{j}$. Set $\varphi^{*t}:=\exp(t\log \varphi)$ for $t\in k$. It coincides with the $n^{\smop{th}}$ convolution power of $\varphi$ for any integer $n$. Hence $\varphi^{*t}$ is an $\Cal A$-valued character of $\Cal H$ for any $t\in k$. Indeed, for any $x,y\in \Cal H$ the expression $\varphi^{*t}(xy)-\varphi^{*t}(x)\varphi^{*t}(y)$ is polynomial in $t$ and vanishes on all integers, hence vanishes identically. Differentiating with respect to $t$ at $t=0$ we immediately find that $\log \varphi$ is an infinitesimal character.
\end{proof}
\subsection{Group schemes and the Cartier-Milnor-Moore-Quillen theorem}\label{sect:cmm}
\begin{thm}[Cartier, Milnor, Moore, Quillen]
Let $\Cal U$ be a cocommutative connected filtered Hopf algebra and let $\g g$ be
the Lie algebra of its primitive elements, endowed with the filtration induced by the one of $\Cal U$, which in turns induces a filtration on the enveloping algebra $\Cal U(\g g)$. Then $\Cal U$
and $\Cal U(\g g)$ are isomorphic as filtered Hopf
algebras. If $\Cal U$ is moreover graded, then the two Hopf algebras are
isomorphic as graded Hopf algebras.
\end{thm}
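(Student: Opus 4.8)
The plan is to build the isomorphism as the canonical map out of the enveloping algebra and then prove it is bijective. Since $\g g=\mop{Prim}\Cal U$ is a Lie subalgebra of $\Cal U$ for the commutator bracket (Proposition \ref{primitives}), the inclusion $\g g\hookrightarrow\Cal U$ is a morphism of Lie algebras, so by the universal property of $\Cal U(\g g)$ (cf. \S\ref{ex3}) it extends uniquely to an algebra morphism $\Phi:\Cal U(\g g)\to\Cal U$. Because $\Phi$ sends the generators (primitive elements) to primitive elements, it is automatically a morphism of coalgebras, hence a morphism of Hopf algebras; and since a primitive element $\xi$ with $|\xi|=d$ lands in $\Cal U^d$, the map $\Phi$ respects the filtrations (and the gradings, in the graded case). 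The theorem thus reduces to proving that $\Phi$ is bijective.

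For surjectivity I would show that $\Cal U$ is generated as an algebra by its primitive elements, arguing by induction on the filtration degree. For $x\in\Cal U^n\cap\mop{Ker}\varepsilon$ with $n\ge 1$, Proposition \ref{coproduit-connexe} gives $\wt\Delta x\in\Cal U^{n-1}\otimes\Cal U^{n-1}$, so by the induction hypothesis $\wt\Delta x$ lies in $\Phi\otimes\Phi$ of a tensor over the subalgebra generated by primitives. The point is then to correct $x$ by a suitable polynomial in lower-degree primitives so as to kill $\wt\Delta x$, leaving a genuinely primitive remainder; here cocommutativity together with $\mop{char}k=0$ is exactly what makes the required symmetrization available. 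This closes the induction and yields $\mop{Im}\Phi=\Cal U$.

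For injectivity --- the substantive content, namely the Poincar\'e--Birkhoff--Witt theorem --- I would exploit the convolution machinery of \S\ref{convolution}. In the complete filtered convolution algebra $\Cal L(\Cal U,\Cal U)$ the identity splits as $\mop{Id}=e+(\mop{Id}-e)$ with $\mop{Id}-e$ of valuation $\ge 1$, so the \emph{first Eulerian idempotent}
$$e_1:=\mop{Log}(\mop{Id})=\sum_{k\ge 1}\frac{(-1)^{k-1}}{k}(\mop{Id}-e)^{*k}$$
is well defined, the series being locally finite exactly as in Propositions \ref{groupeG} and \ref{prop:exp}. Cocommutativity forces $e_1$ to be an idempotent projecting $\Cal U$ onto $\g g$, and $\mop{Id}=\exp(e_1)=\sum_{n\ge 0}\frac{1}{n!}e_1^{*n}$ produces a family of idempotents $e_n:=\frac{1}{n!}e_1^{*n}$ with $\sum_{n\ge 0}e_n=\mop{Id}$, hence a canonical decomposition $\Cal U=\bigoplus_{n\ge 0}\mop{Im}\,e_n$ (this already re-proves surjectivity, since each $\mop{Im}\,e_n$ consists of symmetrized products of $n$ primitives). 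One then checks that $\Phi$ intertwines these idempotents with their analogues on $\Cal U(\g g)$, so that $\Phi$ maps the word-length-$n$ symmetric part of $\Cal U(\g g)$ onto $\mop{Im}\,e_n$; injectivity of $\Phi$ thereby reduces, degree by degree, to the linear independence of symmetrized monomials in a basis of $\g g$, which is the PBW statement and uses $\mop{char}k=0$.

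The main obstacle is precisely this injectivity step: the construction of $\Phi$ and its surjectivity are essentially formal, whereas injectivity is PBW and is where cocommutativity and characteristic zero are genuinely needed. An equivalent route is to compare associated graded objects, verifying that $\mop{gr}\Phi:S(\g g)\to\mop{gr}\Cal U$ is an isomorphism and then lifting, since a filtered map whose associated graded is bijective is itself bijective. Finally, the graded case needs no extra argument: $\Phi$, $e_1$ and the $e_n$ are built from degree-preserving operations, so when $\Cal U$ is graded the resulting isomorphism is automatically one of graded Hopf algebras.
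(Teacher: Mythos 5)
Your construction of $\Phi$ and your surjectivity argument follow essentially the same route as the paper: it also takes the canonical algebra-and-coalgebra morphism $\varphi:\Cal U(\g g)\to\Cal U$ and proves surjectivity by induction (there, on the coradical filtration), the ``correction by a polynomial in lower-degree primitives'' that you gesture at being made explicit as $y=x-\frac{1}{n!}\sum_{(x)}x^{(1)}\cdots x^{(n)}$, where the $x^{(j)}$ are the primitive entries of $\wt\Delta^{n-1}(x)$; cocommutativity and $\mop{char}k=0$ enter exactly as you say. Up to the level of detail, no objection there.

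The injectivity step, however, contains a genuine gap, and it sits exactly where you locate ``the substantive content''. Granting the (nontrivial but standard) facts about Eulerian idempotents that you assert, and granting the intertwining $\Phi\circ e_n^{\Cal U(\g g)}=e_n^{\Cal U}\circ\Phi$ (which is correct, since a bialgebra morphism intertwines convolution powers of the identity), you have reduced injectivity of $\Phi$ to the injectivity of the symmetrized-product map $S^n(\g g)\to\Cal U$, $x_1\cdots x_n\mapsto\frac{1}{n!}\sum_{\sigma\in S_n}x_{\sigma(1)}\cdots x_{\sigma(n)}$, the products being taken \emph{in $\Cal U$}. You then declare this to be ``the PBW statement''. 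It is not: PBW asserts the linear independence of such symmetrized monomials inside the abstract enveloping algebra $\Cal U(\g g)$, and hence only identifies the source of your map with $S^n(\g g)$; it says nothing about whether the images of these monomials in the given Hopf algebra $\Cal U$ remain independent --- that is precisely the injectivity of $\Phi$ you are trying to prove, so the reduction is circular. To close the gap you must use the coproduct of $\Cal U$ once more at this point: for $x_1,\ldots,x_n\in\g g$ one computes $\wt\Delta^{n-1}\bigl(\sum_{\sigma}x_{\sigma(1)}\cdots x_{\sigma(n)}\bigr)=\sum_{\sigma}x_{\sigma(1)}\otimes\cdots\otimes x_{\sigma(n)}$, so the composite of the symmetrized product with $\wt\Delta^{n-1}$ is the canonical embedding $S^n(\g g)\hookrightarrow\g g^{\otimes n}$, which is injective in characteristic zero; injectivity on each $\mop{Im}e_n$ follows. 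Note that the paper gets injectivity far more cheaply, with no idempotents at all: it picks a nonzero $u\in\mop{Ker}\varphi$ of minimal coradical degree $d(u)\ge 2$, observes $0=\Delta\varphi(u)=(\varphi\otimes\varphi)\Delta(u)=\sum_{(u)}\varphi(u')\otimes\varphi(u'')$, concludes $\wt\Delta u=0$ by minimality of $d(u)$, so $u$ is primitive, contradicting $d(u)\ge 2$. The same circularity infects your fallback route: ``verifying that $\mop{gr}\Phi:S(\g g)\to\mop{gr}\Cal U$ is an isomorphism'' is not a verification but the entire theorem.
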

\begin{proof}
The following proof is borrowed from L. Foissy's thesis. The embedding $\iota:\g g\to\Cal U$ obviously induces an algebra morphism
\begin{equation}
\varphi:\Cal U(\g g)\longrightarrow \Cal U.
\end{equation}
It is easy to show that $\varphi$ is also a coalgebra morphism. It remains to
show that $\varphi$ is surjective, injective, and respects the
filtrations. Let us first prove the surjectivity by induction on the coradical
filtration degree: 
\begin{equation}
d(x):=\mop{min}\{n\in\N,\ \widetilde\Delta^{n}(x)=0\}.
\end{equation}
Set $\Cal U^n:=\{x\in\Cal U,\ d(x)\le n\}$, and similarly for $\Cal U(\g g)$. We can limit ourselves to the kernel of the co-unit. Any
$x\in\Cal U^1\cap \mop{Ker}\varepsilon$ is primitive, hence $\varphi:\Cal U(\g
g)^1\to\Cal U^1$ is obviously a linear isomorphism. Now for $x\in\Cal U^n\cap \mop{Ker}\varepsilon$
(for some integer $n\ge 2$) we can write, using cocommutativity:
\begin{eqnarray*}
\widetilde\Delta^{n-1}(x)&=&\sum_{(x)}x^{(1)}\otimes\cdots\otimes x^{(n)}\\
&=&\frac {1}{n!}\sum_{\sigma\in S_n}\sum_{(x)}x^{(\sigma_1)}\otimes\cdots\otimes x^{(\sigma_n)}
\end{eqnarray*}
where the $x^{(j)}$'s are of coradical filtration degree $1$, hence primitive. But we also have:
\begin{equation}
\widetilde\Delta^{n-1}(x^{(1)}\cdots x^{(n)})=\sum_{\sigma\in S_n}x^{(\sigma_1)}\otimes\cdots\otimes x^{(\sigma_n)}.
\end{equation}
Hence the element $y=x-\frac{1}{n!}\sum_{(x)}x^{(1)}\otimes\cdots\otimes
x^{(n)}$ belongs to $\Cal U^{n-1}$. It is a linear combination of
products of primitive elements by induction hypothesis, hence so is $x$. We
have thus proved that $\Cal U$ is generated by $\g g$, which amounts to the
surjectivity of $\varphi$.\\

Now consider a nonzero element $u\in \Cal U(\g g)$ such that $\varphi(u)=0$,
and such that $d(u)$ is minimal. We have already proved $d(u)\ge 2$. We now
compute:
\begin{eqnarray*}
0=\Delta\big(\varphi(u)\big)&=&(\varphi\otimes\varphi)\Delta(u)\\
&=&(\varphi\otimes\varphi)\left(u\otimes 1+1\otimes u+\sum_{(x)}u'\otimes
  u''\right)\\
&=&\sum_{(u)}\varphi(u')\otimes\varphi(u'').
\end{eqnarray*}
By minimality hypothesis on $d(u)$, we get then $\sum_{(u)}u'\otimes
u''=0$. Hence $u$ is primitive, i.e. $d(u)=1$, a contradiction. Hence
$\varphi$ is injective. The compatibility with the original filtration or
graduation is obvious.
\end{proof}
Now let $\Cal H:\bigcup_{n\ge 0}\Cal H^n$ be a connected filtered Hopf algebra and let $\Cal A$ be
a commutative unital algebra. We suppose that the components of the filtration
are finite-dimensional. The group $G_1(\Cal A)$ defined in the previous
paragraph depends functorially on the target algebra
$\Cal A$: In particular, when the Hopf algebra $\Cal H$ itself is commutative,
the correspondence $\Cal A\mapsto G_1(\Cal A)$ is a {\sl group
  scheme\/}. In the graded case with finite-dimensional components, it is possible to reconstruct the Hopf algebra $\Cal
H$ from the group scheme. We have indeed:
\begin{prop}
\begin{equation}
\Cal H=\Big(\Cal U\big(\g g_1(k)\big)\Big)^\circ,
\end{equation}
where $\g g_1(k)$ is the Lie algebra of infinitesimal characters with values
in the base field $k$, where $\Cal U\big(\g g_1(k)\big)$ stands for its enveloping
algebra, and where $(-)^\circ$ stands for the graded dual.
\end{prop}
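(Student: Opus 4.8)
The plan is to deduce the identity from the Cartier--Milnor--Moore--Quillen theorem applied to the graded dual $\Cal H^\circ$, together with a biduality argument in which the finite-dimensionality of the homogeneous components is used in an essential way. First I would equip $\Cal H^\circ=\bigoplus_{n\ge 0}\Cal H_n^*$ with its Hopf algebra structure: its product is the transpose ${}^t\!\Delta$ of the coproduct of $\Cal H$ (that is, the convolution product restricted to the graded dual), and its coproduct is the transpose ${}^t\!m$ of the product $m$. The only point requiring the hypothesis is that ${}^t\!m$ genuinely lands in $\Cal H^\circ\otimes\Cal H^\circ$; this holds because $\dim\Cal H_n<\infty$ yields the canonical identification $(\Cal H_p\otimes\Cal H_q)^*=\Cal H_p^*\otimes\Cal H_q^*$, so the transpose of $m$ respects the tensor decomposition of the graded dual. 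Since $\Cal H$ is connected ($\Cal H_0=k$) the dual is connected as well, and since $\Cal H$ is commutative the dual is cocommutative; thus $\Cal H^\circ$ is a connected graded cocommutative Hopf algebra, precisely the class of objects treated by the Theorem above.

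Second, I would identify $\mop{Prim}\Cal H^\circ$ with $\g g_1(k)$. As the coproduct of $\Cal H^\circ$ is ${}^t\!m$ and its unit is $\varepsilon$, pairing $\alpha\in\Cal H^\circ$ against $x\otimes y$ gives $\langle\Delta_{\Cal H^\circ}\alpha,\,x\otimes y\rangle=\alpha(xy)$, while $\langle\alpha\otimes\un+\un\otimes\alpha,\,x\otimes y\rangle=\alpha(x)\varepsilon(y)+\varepsilon(x)\alpha(y)$. Hence $\alpha$ is primitive in $\Cal H^\circ$ if and only if $\alpha(xy)=\alpha(x)e(y)+e(x)\alpha(y)$ for all $x,y$, which is exactly the defining relation of an infinitesimal character with values in $k$. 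Moreover the bracket on $\mop{Prim}\Cal H^\circ$ is the commutator for the product ${}^t\!\Delta$, i.e. the convolution commutator $[\alpha,\beta]=\alpha*\beta-\beta*\alpha$, so the identification $\mop{Prim}\Cal H^\circ=\g g_1(k)$ holds as graded Lie algebras.

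Third, the Cartier--Milnor--Moore--Quillen theorem applied to $\Cal H^\circ$ produces an isomorphism of graded Hopf algebras $\Cal H^\circ\cong\Cal U\big(\g g_1(k)\big)$. Taking graded duals of both sides and using that $\g g_1(k)$ is positively graded with finite-dimensional components — whence $\Cal U(\g g_1(k))$ is again graded with finite-dimensional components, by Poincar\'e--Birkhoff--Witt — I recover $\Cal H=(\Cal H^\circ)^\circ\cong\big(\Cal U(\g g_1(k))\big)^\circ$, which is the assertion.

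The main obstacle is the biduality step: everything rests on the canonical isomorphism $(\Cal H^\circ)^\circ\cong\Cal H$, which is valid only because each homogeneous component is finite-dimensional. Without that hypothesis ${}^t\!m$ need not close up inside $\Cal H^\circ\otimes\Cal H^\circ$ and the double dual would be strictly larger than $\Cal H$. A related subtlety to handle with care is that the infinitesimal characters relevant here are the \emph{graded} ones, supported on finitely many components — genuine elements of $\bigoplus_n\Cal H_n^*$ rather than of the full dual $\prod_n\Cal H_n^*$; this becomes automatic once $\g g_1(k)$ is read as $\mop{Prim}\Cal H^\circ$, and it is exactly the finite-dimensionality of the components that reconciles the two readings.
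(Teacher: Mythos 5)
Your proof is correct and is precisely the argument the paper intends: the proposition is stated immediately after the Cartier--Milnor--Moore--Quillen theorem and left without an explicit proof, the implicit justification being exactly your route --- dualize, observe that $\Cal H^\circ$ is connected graded cocommutative, identify $\mop{Prim}\Cal H^\circ$ with $\g g_1(k)$ via $\langle{}^t\!m\,\alpha,x\otimes y\rangle=\alpha(xy)$, apply the theorem, and bidualize using the finite-dimensionality of the homogeneous components. Your closing caveat, that $\g g_1(k)$ must here be read as the \emph{graded} infinitesimal characters (elements of $\bigoplus_n\Cal H_n^*$ rather than of the full dual $\prod_n\Cal H_n^*$, which is in general strictly larger), is a genuine subtlety that the paper's bare statement glosses over, and you resolve it correctly by defining $\g g_1(k)$ as $\mop{Prim}\Cal H^\circ$.
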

In the case when the Hopf algebra $\Cal H$ is not commutative this is no longer possible to
reconstruct it from $G_1(k)$.
\subsection{Renormalization in connected filtered Hopf algebras}\label{sect:ren}
We describe in this section the renormalization \`a la Connes-Kreimer
(\cite{K}, \cite{CK1}) in the
abstract context of connected filtered Hopf algebras: the objects to be
renormalized are characters with values in a commutative unital target algebra
$\Cal A$ endowed with a {\sl renormalization scheme\/}, i.e. a splitting $\Cal
A=\Cal A_-\oplus\Cal A_+$ into two subalgebras. An important example is given
by the {\sl minimal subtraction scheme\/} of the algebra $\Cal A$ of meromorphic functions of one variable $z$, where
$\Cal A_+$ is the algebra of meromorphic functions which are holomorphic at
$z=0$, and where $\Cal A_-=z^{-1} \CC [z^{-1}]$ stands for the ``polar parts''. Any $\Cal A$-valued character $\varphi$ admits a unique {\sl Birkhoff decomposition\/}:
$$\varphi=\varphi_-^{*-1}*\varphi_+,$$
where $\varphi_+$ is an $\Cal A_+$-valued character, and where
$\varphi(\mop{Ker}\varepsilon)\subset \Cal A_-$. In the MS scheme case
described just above, the renormalized character is the scalar-valued
character given by the evaluation of $\varphi_+$ at $z=0$ (whereas the
evaluation of $\varphi$ at $z=0$ does not necessarily make sense).\\

We consider here the situation where the algebra $\Cal A$ admits a {\sl
renormalization scheme\/}, i.e. a splitting into two subalgebras:
$$\Cal A=\Cal A_-\oplus \Cal A_+$$
with $\un\in\Cal A_+$. Let $\pi:\Cal A\surj{6}\Cal A_-$ be the projection on
$\Cal A_-$ parallel to $\Cal A_+$.
\begin{thm}
\begin{enumerate}
\item Let $\Cal H$ be a connected filtered Hopf algebra. Let $G(\Cal A)$ be the group
  of those $\varphi\in\Cal L(\Cal H,\Cal A)$ such that $\varphi(\un)=\un_{\Cal
    A}$ endowed with the convolution product. Any $\varphi\in G(\Cal A)$ admits a
  unique Birkhoff decomposition:
\begin{equation}
\varphi=\varphi_-^{*-1} * \varphi_+,
\end{equation}
where $\varphi_-$ sends $\un$ to $\un_{\Cal A}$ and $\mop{Ker}\varepsilon$
into $\Cal A_-$, and where $\varphi_+$ sends $\Cal H$ into $\Cal A_+$. The maps
$\varphi_-$ and $\varphi_+$ are given on $\mop{Ker}\varepsilon$ by the
following recursive formulas:
\begin{eqnarray}
\varphi_-(x)	&=&-\pi\Big( \varphi(x)+\sum_{(x)}\varphi_-(x')\varphi(x'')\Big)	\\
\varphi_+(x)	&=&(I-\pi)\Big(
\varphi(x)+\sum_{(x)}\varphi_-(x')\varphi(x'')\Big).
\end{eqnarray}
\item If the algebra $\Cal A$ is commutative and if $\varphi$ is a character, the components $\varphi_-$ and $\varphi_+$ occurring in the Birkhoff decomposition of $\chi$ are characters as well. 
\end{enumerate}
\end{thm}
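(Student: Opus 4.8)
For item (1) the plan is to treat the recursion first as a definition and only afterwards verify its properties. I would begin by checking that the formula for $\varphi_-$ is well posed: by Proposition \ref{coproduit-connexe} in its filtered version, writing $\wt\Delta x=\sum_{(x)}x'\otimes x''$ on $\mop{Ker}\varepsilon$ forces $|x'|,|x''|<|x|$, so the right-hand side $-\pi\big(\varphi(x)+\sum_{(x)}\varphi_-(x')\varphi(x'')\big)$ only calls $\varphi_-$ on elements of strictly smaller degree. Setting $\varphi_-(\un)=\un_{\Cal A}$, this defines $\varphi_-\in G(\Cal A)$ by induction on $|x|$, and $\varphi_-(\mop{Ker}\varepsilon)\subset\Cal A_-$ holds because the values lie in the image of $-\pi$. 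I would then simply put $\varphi_+:=\varphi_-*\varphi$; a one-line computation with $\Delta x=x\otimes\un+\un\otimes x+\wt\Delta x$ shows that on $\mop{Ker}\varepsilon$ this equals $(I-\pi)\big(\varphi(x)+\sum_{(x)}\varphi_-(x')\varphi(x'')\big)$, which matches the stated formula and lies in $\Cal A_+$; since moreover $\varphi_+(\un)=\un_{\Cal A}\in\Cal A_+$, the map $\varphi_+$ sends all of $\Cal H$ into $\Cal A_+$. As $\varphi_-$ is invertible in the group $G(\Cal A)$ of Proposition \ref{groupeG}, the identity $\varphi_+=\varphi_-*\varphi$ is exactly the factorisation $\varphi=\varphi_-^{*-1}*\varphi_+$.

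For uniqueness I would suppose $\varphi=\psi_-^{*-1}*\psi_+$ is a second decomposition with the prescribed target behaviour, so that $\psi_+=\psi_-*\varphi$. Evaluating on $x\in\mop{Ker}\varepsilon$ gives $\psi_+(x)=\psi_-(x)+\varphi(x)+\sum_{(x)}\psi_-(x')\varphi(x'')$; applying $\pi$ and using $\pi(\psi_+(x))=0$ (because $\psi_+(x)\in\Cal A_+$) together with $\pi(\psi_-(x))=\psi_-(x)$ (because $\psi_-(x)\in\Cal A_-$), I recover for $\psi_-$ the very recursion defining $\varphi_-$. Induction on $|x|$ then forces $\psi_-=\varphi_-$, hence $\psi_+=\varphi_+$.

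For item (2) the key observation is that, since $\Cal A_-$ and $\Cal A_+$ are both subalgebras, the projection $\pi$ is a Rota--Baxter operator of weight $-1$, namely $\pi(a)\pi(b)=\pi\big(\pi(a)b+a\pi(b)-ab\big)$ for all $a,b\in\Cal A$; this is a direct check splitting $a,b$ into their $\Cal A_\pm$-components. Granting it, I claim it is enough to prove that $\varphi_-$ is a character: for then $\varphi_+=\varphi_-*\varphi$ is a convolution of two characters into the commutative algebra $\Cal A$, hence a character by Proposition \ref{prop:convolution2}. To establish $\varphi_-(xy)=\varphi_-(x)\varphi_-(y)$ I would induct on $|x|+|y|$ for $x,y\in\mop{Ker}\varepsilon$, the cases involving $\un$ being trivial.

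In the inductive step I would expand $\varphi_-(xy)=-\pi\big(\varphi(xy)+\sum_{(xy)}\varphi_-((xy)')\varphi((xy)'')\big)$ by computing $\wt\Delta(xy)$ from the multiplicativity $\Delta(xy)=\Delta x\cdot\Delta y$; this yields the cross terms $x\otimes y$ and $y\otimes x$ together with sums built from $x',x'',y',y''$ and their products. Splitting every $\varphi(ab)$ via the character property of $\varphi$, and every $\varphi_-$ of a product of strictly smaller total degree via the induction hypothesis, the whole expression should collapse, after one application of the weight $-1$ Rota--Baxter relation, to $\varphi_-(x)\varphi_-(y)$; the smallest case $|x|=|y|=1$ (both primitive) is literally the Rota--Baxter identity applied to $a=\varphi(x)$, $b=\varphi(y)$. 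The main obstacle is precisely this bookkeeping: one must regroup the many summands of $\wt\Delta(xy)$ into the shape $\pi(a)b+a\pi(b)-ab$ so that the Rota--Baxter relation applies, and it is the commutativity of $\Cal A$, allowing $\pi(a)b$ and $a\pi(b)$ to be interchanged where needed, that makes the cancellations close. Once $\varphi_-$ is known to be a character, the statement for $\varphi_+$ follows immediately as indicated above.
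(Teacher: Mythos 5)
Your proposal is correct and follows essentially the same route as the paper: the Bogoliubov-type recursion defining $\varphi_-$, the identity $\varphi_+=\varphi_-*\varphi$ for part (1), and for part (2) an induction on $|x|+|y|$ driven by the Rota--Baxter identity for $\pi$ together with commutativity of $\Cal A$ and the character property of $\varphi$. The only differences are matters of explicitness, not of method: you spell out the uniqueness argument (applying $\pi$ to $\psi_+=\psi_-*\varphi$ to recover the recursion) and the deduction that $\varphi_+$ is a character once $\varphi_-$ is, via the group of characters under convolution, both of which the paper leaves implicit.
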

\begin{proof}
The proof goes along the same lines as the proof of Theorem 4 of \cite
{CK1}: for the first assertion it is immediate from the definition of $\pi$
that $\varphi_-$ sends $\mop{Ker}\varepsilon$ into $\Cal A_-$, and that
$\varphi_+$ sends $\mop{Ker}\varepsilon$ into $\Cal A_+$. It only remains to
check equality $\varphi_+=\varphi_-*\varphi$, which is an easy computation:
\begin{eqnarray*}
\varphi_+(x)	&=&(I-\pi)\Bigl( \varphi(x)+\sum_{(x)}\varphi_-(x')\varphi(x'')\Bigr).	\\
			&=&\varphi(x)+\varphi_-(x)+ \sum_{(x)}\varphi_-(x')\varphi(x'')	\\
			&=&(\varphi_-*\varphi)(x).
\end{eqnarray*}

The proof of assertion 2) goes exactly as in \cite {CK1} and relies on the
following {\sl Rota-Baxter relation\/} in $\Cal A$:
\begin{equation}
\pi(a)\pi(b)=\pi\big(\pi(a)b+a\pi(b)\big)-\pi(ab),
\end{equation}
which is easily verified by decomposing $a$ and $b$ into their $\Cal A_\pm$-parts. Let $\varphi$ be a character of $\Cal H$ with values in $\Cal A$. Suppose that we have $\varphi_-(xy)=\varphi_-(x)\varphi_-(y)$ for any $x,y\in\Cal H$ such that $|x|+|y|\le d-1$, and compute for $x,y$ such that $|x|+|y|=d$:
$$\varphi_-(x)\varphi_-(y)=\pi(X)\pi(Y),$$
with $X=\varphi(x)-\sum_{(x)}\varphi_-(x')\varphi(x'')$ and $Y=\varphi(y)-\sum_{(y)}\varphi_-(y')\varphi(y'')$. Using the formula:
$$\pi(X)=-\varphi_-(x),$$
we get:
$$\varphi_-(x)\varphi_-(y)=-\pi\bigl(XY+\varphi_-(x)Y+X\varphi_-(y)\bigr),$$
hence:
\begin{eqnarray*}
\varphi_-(x)\varphi_-(y)&=&-\pi\Big(\varphi(x)\varphi(y)+\varphi_-(x)\varphi(y)+\varphi(x)\varphi_-(y)\\
&{}&\hskip 8mm+\sum_{(x)}\varphi_-(x')\varphi(x'')\big(\varphi(y)+\varphi_-(y)\big)
	+\sum_{(y)}\big(\varphi(x)+\varphi_-(x)\big)\varphi_-(y')\varphi(y'')\\
&{}&\hskip 8mm+\sum_{(x)(y)}\varphi_-(x')\varphi(x'')\varphi_-(y')\varphi(y'')\Big).
\end{eqnarray*}
We have to compare this expression with:
\begin{eqnarray*}
\varphi_-(xy)&=&-\pi\Big(\varphi(xy)+\varphi_-(x)\varphi(y)+\varphi_-(y)\varphi(x)\\
&{}&\hskip 8mm+\sum_{(x)}\big(\varphi_-(x'y)\varphi(x'')+\varphi_-(x')\varphi(x''y) \big)
+\sum_{(y)}\big(\varphi_-(xy')\varphi(y'')+\varphi_-(y')\varphi(xy'') \big) \\
&{}&\hskip 8mm+\sum_{(x)(y)}\varphi_-(x'y')\varphi(x''y'')	\Big).
\end{eqnarray*}
These two expressions are easily seen to be equal using the commutativity of the algebra $\Cal A$, the character property for $\varphi$ and the induction hypothesis.
\end{proof}
\begin{rmk}
Assertion 2) admits a more conceptual proof (see the notes by K. Ebrahimi-Fard
in the present volume), which is based on the following recursive expressions
for the components of the Birkhoff decomposition: define the {\rm Bogoliubov preparation map\/} as the map $b:G(\Cal A)\to \Cal
L(\Cal H,\Cal A)$ recursively given by:
\begin{equation}
b(\varphi)(x)=\varphi(x)+\sum_{(x)}\varphi_-(x')\varphi(x'').
\end{equation}
Then the components of $\varphi$ in the Birkhoff decomposition read:
\begin{equation}\label{bogo2}
\varphi_-=-\pi\circ b(\varphi),\hskip 12mm \varphi_+=(I-\pi)\circ
b(\varphi).
\end{equation}
Bogoliubov preparation map also writes in more concise form:
\begin{equation}\label{bogo3}
b(\varphi)=\varphi_-*(\varphi-e).
\end{equation}
Plugging equation (\ref{bogo3}) inside (\ref{bogo2}) and setting
$\alpha:=e-\varphi$ we get the following expression for $\varphi_-$:
\begin{eqnarray}
\varphi_-&=&e+P(\varphi_-*\alpha)\\
&=&e+P(\alpha)+P\big(P(\alpha)*\alpha\big)+\cdots+
\underbrace{P\Big(P\big(...P(}_{\hbox{\sevenrm $\scriptstyle n$ times}}\alpha)*\alpha\big)\cdots *\alpha\Big)+\cdots\label{pre-spitzer}
\end{eqnarray}
where $P:\Cal L(\Cal H,\Cal A)\to\Cal L(\Cal H,\Cal A)$ is the projection
defined by $P(\alpha)=\pi\circ \alpha$. The renormalized part $\varphi_+$
satisfies an analogous recursive expression:
\begin{align}
	\varphi_+	&=	e+\tilde{P}(\varphi_-*\alpha) 						\label{pre-spitzer+} \\
				&= e+\tilde P(\varphi_+*\beta)\\
			&=	e+\tilde{P}(\beta)+\tilde{P}\bigl(\tilde{P}(\beta) * \beta\bigr) + \cdots +
				\underbrace{\tilde{P}\Bigl(\tilde{P}\bigl(\ldots \tilde{P}(}_{\hbox{\sevenrm $\scriptstyle n$ times}} 	  
								\beta)*\beta\bigr)\cdots *\beta\Bigr)+ \cdots     \nonumber   
\end{align}
with $\beta:=\varphi^{-1}*\alpha=e-\varphi^{-1}$, and where $\tilde{P}=I-P$ is the projection on $\mathcal L(\mathcal H,\mathcal A)$ defined by $\tilde{P}(\alpha)=(I-\pi) \circ \alpha$.
\end{rmk}
\section{Pre-Lie algebras}
Pre-Lie algebras are sometimes called {\sl Vinberg algebras\/}, as they appear
in the work of E. B. Vinberg \cite{V63} under the name ``left-symmetric
algebras'' on the classification of
homogeneous cones. They appear independently at the same time in the work of
M. Gerstenhaber \cite{G63} on Hochschild cohomology and
deformations of algebras, under the name ``pre-Lie algebras'' which is now the
standard terminology. The term ``chronological algebras'' has also been
sometimes used, e.g. in the fundamental work of A. Agrachev and R. Gamkrelidze
\cite{AG81}. The notion itself can be however traced back to
the work of A. Cayley \cite{Cay} which, in modern
language, describes {\sl the\/} pre-Lie algebra morphism $F_a$ from the pre-Lie
algebra of rooted trees into the pre-Lie algebra of vector fields on $\R^n$
sending the one-vertex tree to a given vector field $a$. For a survey
emphasizing on geometric aspects, see \cite{Bu06}.
\subsection{Definition and general properties}
A {\sl left pre-Lie algebra\/} over a field $k$ is a $k$-vector space $A$ with a bilinear binary composition $ \rhd  $ that satisfies the left pre-Lie identity:
\begin{equation}
    (a \rhd   b) \rhd   c-a \rhd  (b \rhd  c)=
    (b \rhd   a) \rhd   c-b \rhd  (a \rhd   c),
    \label{prelie}
\end{equation}
for $a,b,c \in A$. Analogously, a {\sl right pre-Lie
algebra\/} is a $k$-vector space $A$ with a binary composition $ \lhd  $ that satisfies the right pre-Lie identity:
\begin{equation}
    (a \lhd   b) \lhd   c-a \lhd  (b \lhd  c)=
    (a \lhd   c) \lhd   b-a \lhd  (c \lhd   b).
    \label{prelie2}
\end{equation} 
The left pre-Lie identity rewrites as:
\begin{equation}\label{prelie1-2}
L_{[a,b]}=[L_a,L_b],
\end{equation}
where $L_a:A\to A$ is defined by $L_ab=a\rhd b$, and where the bracket on the
left-hand side is defined by $[a,b]:=a\rhd b-b\rhd a$. As an easy consequence
this bracket satisfies the Jacobi identity: If $A$ is unital (i.e. there
exists $\un\in A$ such that $\un\rhd a=a\rhd 1=1$ for any $a\in A$) it is
immediate thanks to the fact that $L: A\to\mop{End}A$ is injective. If not, we
can add a unit by considering $\overline A:=A\oplus k.\un$ and extend
$L:\overline A\to\mop{End}\overline A$ accordingly. As any right pre-Lie algebra $(A,\lhd)$ is also a left pre-Lie algebra with
product $a\rhd b:=b\lhd a$, one can stick to left pre-Lie algebras, what we shall do unless specifically indicated.
\subsection{The group of formal flows}\label{sect:formal-flows}
The following is taken from the paper of A. Agrachev and R. Gamkrelidze
\cite{AG81}. Suppose that $A$ is a left pre-Lie algebra endowed with a
compatible decreasing filtration, namely $A=A_1\supset A_2\subset A_3\supset\cdots$, such that the intersection of the $A_j$'s reduces to $\{0\}$, and such that $A_p\rhd A_q\subset A_{p+q}$. Suppose moreover that $A$ is complete with respect to this filtration. The Baker-Campbell-Hausdorff formula:
\begin{equation}
C(a,b)=a+b+\frac 12[a,b]+\frac 1{12}([a,[a,b]]+[b,[b,a]])+\cdots
\end{equation}
endows then $A$ with a structure of pro-unipotent group. An example of this
situation is given by $A=hB[[h]]$ where $B$ is any pre-Lie algebra, and
$A_j=h^jB[[h]]$. This group admits a
more transparent presentation as follows : introduce a fictitious unit $\un$
such that $\un\rhd a=a\rhd\un=a$ for any $a\in A$, and define $W:A\to A$ by:
\begin{equation}
W(a):=e^{L_a}\un-\un=a+\frac 12 a\rhd a+\frac 16 a\rhd(a\rhd a)+\cdots.
\end{equation}
The application $W$ is clearly a bijection. The inverse, denoted by $\Omega$, also appears under the name "pre-Lie Magnus expansion" in \cite{EM}. It verifies the equation:
\begin{equation}
\Omega(a)=\frac{L_{\Omega(a)}}{e^{L_{\Omega(a)}}-Id}a=\sum_{i\ge 0}B_iL_{\Omega(a)}^{i}a,
\end{equation}
where the $B_i$'s are the Bernoulli numbers. The first few terms are:
\begin{equation}
\Omega(a)=a-\frac 12 a\rhd a +\frac 14 (a\rhd a)\rhd a+\frac 1{12} a\rhd(a\rhd a)+\cdots
\end{equation}
Transferring the BCH product by means of the map $W$, namely:
\begin{equation}\label{diese}
a\# b=W\Big(C\big(\Omega(a),\Omega(b)\big)\Big),
\end{equation}
we have $W(a)\#W(b)=W\big(C(a,b)\big)=e^{L_a}e^{L_b}\un-\un$, hence $W(a)\#W(b)=W(a)+e^{L_a}W(b)$. The product $\#$ is thus given by the simple formula:
\begin{equation}
a\# b=a+e^{L_{\Omega(a)}}b.
\end{equation}
The inverse is given by
$a^{\#-1}=W\big(-\Omega(a)\big)=e^{-L_{\Omega(a)}}\un-\un$. If $(A,\rhd)$ and
$(B,\rhd)$ are two such pre-Lie algebras and $\psi:A\to B$ is a
filtration-preserving pre-Lie algebra morphism, it is immediate to check that
for any $a,b\in A$ we have:
\begin{equation}
\psi(a\# b)=\psi(a)\#\psi(b).
\end{equation}
In other words, the group of formal flows is a functor from the category of
complete filtered pre-Lie algebras to the category of groups.\\

When the pre-Lie product $\rhd$ is associative, all this simplifies to:
\begin{equation}
a\#b=a\rhd
b+a+b
\end{equation}
and
\begin{equation}
a^{\#-1}=\frac 1{1+a}-1=\sum_{n\ge 1}(-1)^na_n.
\end{equation}
\subsection{The pre-Lie Poincar\'e-Birkhoff-Witt theorem}\label{sect:PBW}
This paragraph exposes a result by D. Guin and J-M. Oudom \cite{GO}
\begin{thm}\label{PBW}
Let $A$ be any left pre-Lie algebra, and let $S(A)$ be its symmetric algebra, i.e. the free
commutative algebra on $A$. Let $A_{\smop{Lie}}$ be the underlying Lie algebra of $A$, i.e. the vector space $A$ endowed with the Lie bracket given by $[a,b]=a\rhd b-b\rhd a$ for any $a,b\in A$, and let $\Cal U(A)$ be the enveloping algebra of  $A_{\smop{Lie}}$, endowed with its usual increasing
filtration. Let us consider the associative algebra $\Cal U(A)$ as a left module over itself.\\

There exists a left $\Cal U(A)$-module structure on $S(A)$ and a
canonical left $\Cal U(A)$-module isomorphism $\eta : \Cal U(A)\to\Cal
S(A)$, such that the associated graded  linear map $\mop{Gr}\eta:\mop{Gr}\Cal U(A)\to S(A)$ is an isomorphism of commutative graded algebras.
\end{thm}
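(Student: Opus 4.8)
The plan is to build the left $\Cal U(A)$-module structure on $S(A)$ explicitly by extending the pre-Lie product, and then to compare the two sides degree by degree. The starting point is the observation that the left multiplication operators $L_a$ satisfy $L_{[a,b]}=[L_a,L_b]$ by the pre-Lie identity \eqref{prelie1-2}, so $a\mapsto L_a$ is a Lie algebra morphism from $A_{\smop{Lie}}$ into $\mop{End}(A)$. The idea is to lift this to an action of $A$ on all of $S(A)$ by derivation-like operators. Concretely, I would first extend $\rhd$ to a bilinear map $A\otimes S(A)\to S(A)$ by declaring $a\rhd \un=0$ and imposing the Leibniz rule $a\rhd(uv)=(a\rhd u)v+u(a\rhd v)$ for $u,v\in S(A)$; this makes each $L_a$ a derivation of the commutative algebra $S(A)$. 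Then I would define a second product, call it $\diamond$, on $S(A)$ extending $\rhd$, so that $S(A)$ becomes an algebra in which left multiplication by elements of $A$ reproduces the $L_a$; the key point is to extend this to a full associative left action of the whole symmetric algebra.

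The crucial algebraic step is to verify that the assignment $a\mapsto(u\mapsto a\rhd u)$ for $a\in A$ extends to an algebra morphism $\Cal U(A)\to\mop{End}(S(A))$, i.e. that it descends through the enveloping algebra. For this I would check that the operators satisfy the defining relation of $\Cal U(A)$: the commutator of the actions of $a$ and $b$ must equal the action of $[a,b]=a\rhd b-b\rhd a$. On the degree-one part $A\subset S(A)$ this is exactly \eqref{prelie1-2}; the content is to propagate it to all of $S(A)$, which follows because both sides are derivations agreeing on generators, together with the Leibniz compatibility I imposed above. Granting this, $S(A)$ is a left $\Cal U(A)$-module. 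I would then define $\eta:\Cal U(A)\to S(A)$ by $\eta(w)=w\rhd\un$, i.e. by acting on the unit $\un\in S(A)$; this is a morphism of left $\Cal U(A)$-modules essentially by construction, since $\eta(w'w)=w'\rhd(w\rhd\un)=w'\rhd\eta(w)$.

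It then remains to show $\eta$ is an isomorphism and that the associated graded map is an isomorphism of graded commutative algebras. The plan is to pass to the associated graded of the canonical filtration on $\Cal U(A)$: by the classical Poincaré--Birkhoff--Witt theorem, $\mop{Gr}\Cal U(A)\cong S(A)$ as commutative graded algebras, via the symbol map. The task is to identify $\mop{Gr}\eta$ with this isomorphism. Filtering by the enveloping-algebra filtration, the top-degree part of $w\rhd\un$ for $w$ a product $a_1\cdots a_n$ of elements of $A$ recovers the commutative product $a_1\cdots a_n\in S(A)$, because the lower-order corrections introduced by $\rhd$ strictly decrease filtration degree; hence on the associated graded the higher pre-Lie terms drop out and $\mop{Gr}\eta$ agrees with the PBW symbol isomorphism. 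Since $\mop{Gr}\eta$ is then bijective, a standard filtered-to-graded argument gives that $\eta$ itself is bijective, completing the proof.

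I expect the main obstacle to be the verification that the pre-Lie action genuinely extends to an associative $\Cal U(A)$-module structure on $S(A)$ — that is, checking the enveloping-algebra relation holds on \emph{all} of $S(A)$ and not merely on the generating subspace $A$. The derivation/Leibniz bookkeeping here is where the pre-Lie identity does real work, and it is the step where a naive extension of $\rhd$ could fail associativity; everything afterward is the comparatively routine filtered/graded PBW machinery.
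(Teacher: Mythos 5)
There is a genuine gap, and it sits exactly where you predicted the difficulty would be, but it is not mere bookkeeping: the module structure you construct is the wrong one. You extend $\rhd$ to operators $L_a$ on $S(A)$ by the Leibniz rule with $L_a\un=0$ (this vanishing is in fact forced: any derivation of a unital commutative algebra kills the unit), and you then take the action of $a\in A$ on $S(A)$ to be $L_a$ itself. That does yield a $\Cal U(A)$-module structure, since $a\mapsto L_a$ is a Lie algebra morphism from $A_{\smop{Lie}}$ to $\mop{Der}S(A)$; but with this action your map $\eta(w)=w\rhd\un$ is degenerate: $\eta(a_1\cdots a_n)=L_{a_1}\cdots L_{a_n}\un=0$ for every $n\ge 1$, so $\eta$ vanishes on the whole augmentation ideal (it factors through the counit) and cannot be an isomorphism. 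Consequently your key claim that the top-degree part of $(a_1\cdots a_n)\rhd\un$ recovers $a_1\cdots a_n\in S(A)$ is false for the action you built, and the filtered-to-graded argument has nothing to run on.

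The missing idea --- which is the actual content of the paper's proof --- is to let $a$ act by $M_au:=au+L_au$, i.e.\ multiplication by $a$ in $S(A)$ \emph{plus} the derivation $L_a$. Then $M_a\un=a$, and an easy induction gives $M_{a_1}\cdots M_{a_n}\un=a_1\cdots a_n+v$ with $v$ of degree at most $n-1$, which is exactly what makes $\mop{Gr}\eta$ the identity and hence $\eta$ bijective. The price is that the enveloping-algebra relation $[M_a,M_b]=M_{[a,b]}$ is no longer automatic: the operators $M_a$ are not derivations, so your justification ``both sides are derivations agreeing on generators'' does not apply to them. One must compute $M_aM_bu=abu+aL_bu+bL_au+(a\rhd b)u+L_aL_bu$ (using $L_a(bu)=(a\rhd b)u+bL_au$), observe that the cross terms $aL_bu+bL_au$ are symmetric in $a$ and $b$ and cancel in the commutator, and invoke $[L_a,L_b]=L_{[a,b]}$ to conclude; this short computation is where the pre-Lie identity does its real work. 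Your derivation-only action, and the $\diamond$ product you sketch around it, cannot be repaired without this correction.
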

\begin{proof}
The Lie algebra morphism 
\begin{align*}
L:A&\longrightarrow\mop{End}A\\
a&\longmapsto (L_a:b\mapsto a\rhd b)
\end{align*}
extends by Leibniz rule to a unique Lie algebra morphism $ L:A\to \mop{Der}S(A)$. Now we claim that the map $M:A\to\mop{End}S(A)$ defined by:
\begin{equation}
M_au=au+L_au
\end{equation}
is a Lie algebra morphism. Indeed we have for any $a,b\in A$ and $u\in S(A)$:
\begin{eqnarray*}
M_aM_bu&=&M_a(bu+L_bu)\\
&=&abu+aL_bu+L_a(bu)+L_aL_bu\\
&=&abu+aL_bu+bL_au+(a\rhd b)u+L_aL_bu.
\end{eqnarray*}
Hence
\begin{eqnarray*}
[M_a,M_b]u&=&(a\rhd b-b\rhd a)u+[L_a,L_b]u\\
&=&M_{[a,b]}u,
\end{eqnarray*}
which proves the claim. Now $M$ extends, by universal property of the enveloping algebra, to a unique algebra morphism $M:\Cal U(A)\to\mop{End} S(A)$. The linear map:
\begin{eqnarray*}
\eta:\Cal U(A)&\longrightarrow& S(A)\\
u&\longmapsto & M_u.1
\end{eqnarray*}
is clearly a morphism of left $\Cal U(A)$-modules. It is immediately seen by induction that for any $a_1,\ldots,a_n\in A$ we have $\eta(a_1\cdots a_n)=a_1\cdots a_n+v$ where $v$ is a sum of terms of degree $\le n-1$. This proves the theorem.
\end{proof}
\begin{rmk}
Let us recall that the symmetrization map $\sigma:\Cal U(A)\rightarrow S(A)$, uniquely determined by $\sigma(a^n)=a^n$ for any $a\in A$ and any integer $n$, is an isomorphism for the two $A_{\smop{Lie}}$-module structures given by the adjoint action. This is {\sl not\/} the case for the map $\eta$ defined above. The fact that it is possible to replace the adjoint action of $\Cal U(A)$ on itself by the simple left mutiplication is a remarkable property of pre-Lie algebras, and makes Theorem \ref{PBW} different from the usual Lie algebra PBW theorem.
\end{rmk}
Let us finally notice that, if $p$ stands for the projection from $S(A)$ onto $A$, we easily get for any $a_1,\ldots,a_k\in A$:
\begin{equation}\label{projection}
p\circ\eta(a_1\cdots a_k)=L_{a_1}\cdots L_{a_k}\un=a_1\rhd\Big(a_2\rhd\big(\cdots(a_{k-1}\rhd a_k)...\big)\Big)
\end{equation}
by a simple induction on $k$. The linear isomorphism $\eta$ transfers the
product of the enveloping algebra $\Cal U(A)$ into a noncommutative
product $\ast$ on $\Cal S(A)$ defined by:
\begin{equation}
s\ast t=\eta\big(\eta^{-1}(s)\eta^{-1}(t)\big).
\end{equation}
Suppose now that $A$ is endowed with a complete decreasing compatible filtration as in
Paragraph \ref{sect:formal-flows}. This filtration induces a complete decreasing
filtration $S(A)=S(A)_0\supset S(A)_1\supset S(A)_2\supset\cdots$, and the product $\ast$ readily extends to the completion $\widehat S(A)$. For any $a\in A$, the application of \eqref{projection} gives:
\begin{equation}
p(e^{\ast a})=W(a)
\end{equation}
as an equality in the completed symmetric algebra $\widehat S(A)$.\\

According to \eqref{diese} we can identify the pro-unipotent group
$\{e^{*a},\,a\in A\}\subset \widehat S(A)$ and the group of formal flows
of the pre-Lie algebra $A$ by means of the projection $p$, namely:
\begin{equation}\label{projectionp}
p(e^{\ast a})\# p(e^{\ast b})=p(e^{\ast a}\ast e^{\ast b})
\end{equation}
for any $a,b\in A$.
\section{Algebraic operads}\label{operads}
An operad is a combinatorial device which appeared in algebraic topology
(J-P. May, \cite{May}), coined for coding ``types of
algebras''. Hence, for example, a Lie algebra is an algebra over some operad
denoted by {\sc Lie}, an associative algebra is an algebra over some operad
denoted by {\sc Assoc}, a commutative algebra is an algebra over some operad
denoted by {\sc Com}, etc. 
\subsection{Manipulating algebraic operations}
Algebra starts in most cases with some set $E$ and some binary operation $*:E\times E\to E$. The set $E$ shows most of the time some extra structure. We will stick here to the linear setting, where $E$ is replaced by a vector space $V$ (over some base field $k$), and $*$ is bilinear, i.e. is a linear map from $V\otimes V$ into $V$. A second bilinear map is deduced from the first by permuting the entries:
\begin{equation}
a*^{\smop{op}}b:=b*a.
\end{equation}
It makes also sense to look at tri-, quadri- and multilinear operations, i.e. linear maps from $V^{\otimes n}$ to $V$ for any $V$. For example it is very easy to produce {\sl twelve\/} trilinear maps starting with the bilinear map $*$ by considering:
\begin{eqnarray*}
(a,b,c)&\mapsto& (a*b)*c,\\
(a,b,c)&\mapsto& a*(b*c),
\end{eqnarray*}
and the others deduced by permuting the three entries $a$, $b$ and $c$. One could also introduce some tri- or multilinear operations from scratch, i.e. without deriving them from the bilinear operation $*$. One can even consider $1$-ary and $0$-ary operations, the latter being just distinguished elements of $V$. Note that there is a canonical $1$-ary operation, namely the identity map $e:V\to V$. Note at this stage that the symmetric group $S_n$ obviously acts on the $n$-ary operations {\sl on the right\/} by permuting the entries before composing them.\\

The bilinear operation $*$ is not arbitrary in general: its properties determine the "type of algebra" considered. For example $V$ will be an associative or a Lie algebra if for any $a,b,c\in V$ we have respectively:
\begin{eqnarray}
(a*b)*c&=&a*(b*c),\label{eq:assoc}\\
(a*b)*c+(b*c)*a+(c*a)*b&=&0,\hskip 6mm a*b-b*a=0\label{eq:lie}.
\end{eqnarray}
The concept of operad emerges when one tries to rewrite such relations in terms of the operation $*$ only, discarding the entries $a,b,c$. For example, the associativity axiom \eqref{eq:assoc} informally expresses itself as follows: {\sl composing twice the operation $*$ in two different ways gives the same result\/}. Otherwise said:
\begin{equation}\label{eq:assoc2}
*\circ_1 *=*\circ_2 *.
\end{equation}
The Lie algebra axioms \eqref{eq:lie}, involving flip and circular permutations, clearly rewrites as:
\begin{equation}\label{eq:lie2}
(*\circ_1*)+(*\circ_1*)\circ\sigma+(*\circ_1*)\circ\sigma^2=0,\hskip 6mm *+*\circ\tau=0,
\end{equation}
where $\tau$ is the flip $(21)$ and $\sigma$ is the circular permutation $(231)$. The next paragraph will give a precise meaning to these "partial compositions", and we will end up with giving the axioms of an operad, which is the natural framework in which equations like \eqref{eq:assoc2} and \eqref{eq:lie2} make sense.
\subsection{The operad of multilinear operations}
Let us now look at the prototype of algebraic operads: for any vector space $V$, the operad $\mop{Endop}(V)$ is given
by:
\begin{equation}
\mop{Endop}(V)_n=\Cal L(V^{\otimes n},V).
\end{equation}
The right action of the symmetric group
$S_n$ on $\mop{Endop}(V)_n$ is induced by the left action of $S_n$ on
$V^{\otimes n}$ given by:
\begin{equation}
\sigma.(v_1\otimes\cdots\otimes v_n):=v_{\sigma^{-1}(1)}\otimes\cdots\otimes v_{\sigma^{-1}(n)}.
\end{equation}
Elements of  $\mop{Endop}(V)_n$ are conveniently represented as boxes with $n$ inputs and one output: as illustrated by the graphical representation below, the partial composition $a\circ_i b$ is given by:
\begin{equation}
a\circ_i b(v_1\otimes\cdots\otimes v_{k+l-1}):=a\big(v_1\otimes\cdots\otimes v_{i-1}
\otimes b(v_i\otimes\cdots\otimes v_{i+l-1})\otimes v_{i+l}\otimes\cdots\otimes v_{k+l-1}\big).
\end{equation}
\begin{equation*}
\operade
\end{equation*}
The following result is straightforward:
\begin{prop}\label{endop}
For any $a\in \mop{Endop}(V)_k$, $b\in\mop{Endop}(V)_l$, $c\in\mop{Endop}(V)_m$ one has:
\begin{align}
(a\circ_i b)\circ_{i+j-1} c&=a\circ_i(b\circ_j c),
&i\in\{1,\ldots,k\}, \ j\in \{1,\ldots,l\} \hbox{ \rm (nested associativity property)},\\
(a\circ_i b)\circ_{l+j-1}c&=(a\circ_j c)\circ_i b, &i,j\in\{1,\ldots,k\}, \ i<j \hbox{ \rm (disjoint associativity property)}.
\end{align}
The identity $e:V\to V$ satisfies the following unit property:
\begin{align}
e\circ a&=a\\
a\circ_ie&=a,\hskip 8mm i=1,\ldots ,k,
\end{align}
and finally the following equivariance property is satisfied:
\begin{equation}\label{equiv}
a.\sigma\circ_{i} b.\tau=(a\circ_{\sigma_i} b).\iota_i(\sigma,\tau)
\end{equation}
where $\iota_i(\sigma,\tau)\in S_{k+l-1}$ is defined by letting $\tau$ permute
the set $E_i=\{i,i+1,\ldots,i+l-1\}$ of cardinality $l$, and then by letting
$\sigma$ permute the set
$\{1,\ldots,i-1,E_i,i+l,\ldots,k+l-1\}$ of cardinality $k$.
\end{prop}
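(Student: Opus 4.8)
**The plan is to verify each identity by direct computation, unwinding the definition of partial composition applied to a tensor $v_1\otimes\cdots\otimes v_N$ of the appropriate length.** Since every operation in $\mop{Endop}(V)_n$ is genuinely a multilinear map $V^{\otimes n}\to V$, each of these operadic identities is an equality of linear maps, and to prove it I need only check that both sides agree on an arbitrary decomposable tensor. This reduces everything to carefully tracking which block of inputs is fed to which inner operation, which is a bookkeeping exercise rather than a conceptual one.

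For the \textbf{nested associativity property}, I would apply both $(a\circ_i b)\circ_{i+j-1}c$ and $a\circ_i(b\circ_j c)$ to a tensor in $V^{\otimes(k+l+m-2)}$. On the left, $c$ consumes the $m$ inputs in positions $i+j-1,\dots,i+j+m-2$, producing one argument that slides into the $j$-th slot of $b$, whose $l$ slots then occupy positions $i,\dots,i+l-1$ of $a$. On the right, $b\circ_j c$ is precisely the operation in $\mop{Endop}(V)_{l+m-1}$ that takes $m$ inputs into its $j$-th slot, and then the whole thing is plugged into the $i$-th slot of $a$. Reading off which $v$'s land where, both sides feed identical blocks of the $v_i$ into $c$, then into $b$, then into $a$; so they coincide. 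For the \textbf{disjoint associativity property} (with $i<j$), the key observation is that $b$ and $c$ act on disjoint, non-overlapping blocks of inputs of $a$; inserting $b$ at slot $i$ shifts the later slot $j$ to position $l+j-1$, which is exactly the index appearing on the left-hand side, while inserting $c$ first at the (unshifted) slot $j$ and then $b$ at slot $i$ produces the same total map. Both amount to substituting $b$ into slot $i$ and $c$ into slot $j$ independently, so equality is clear once the index shift is accounted for.

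The \textbf{unit properties} are immediate: $e\colon V\to V$ is the identity, so $e\circ a=a$ is the statement that post-composing $a$ with the identity does nothing, and $a\circ_i e=a$ says that feeding the $i$-th input of $a$ through the identity before applying $a$ leaves $a$ unchanged. Both follow directly from the defining formula for $\circ_i$ by setting the relevant operation equal to $\mop{Id}$.

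\textbf{The main obstacle is the equivariance property \eqref{equiv}, because it mixes the right action of the symmetric groups with partial composition, and the definition of $\iota_i(\sigma,\tau)$ is itself combinatorial.} Here I would again evaluate both sides on $v_1\otimes\cdots\otimes v_{k+l-1}$, using the defining left action $\sigma.(v_1\otimes\cdots\otimes v_n)=v_{\sigma^{-1}(1)}\otimes\cdots\otimes v_{\sigma^{-1}(n)}$ and the fact that the right action on operations is by precomposition with this permutation of inputs. The right-hand side $(a\circ_{\sigma_i}b).\iota_i(\sigma,\tau)$ first reorders all $k+l-1$ inputs by $\iota_i(\sigma,\tau)$ and then applies $a\circ_{\sigma_i}b$; the claim is that $\tau$ permutes the block $E_i$ destined for $b$ while $\sigma$ permutes the resulting $k$ coarse slots of $a$, with $b$ landing in slot $\sigma_i$. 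The delicate point is confirming that the composite permutation $\iota_i(\sigma,\tau)$, defined by this two-stage ``permute the block, then permute the blocks'' recipe, exactly reproduces the reindexing induced by first letting $\tau$ act inside $b$ and $\sigma$ act inside $a$ on the left-hand side $a.\sigma\circ_i b.\tau$. I expect this to reduce to a careful but routine check that the two prescriptions for the positions of each $v_j$ agree; the cleanest way to present it is to compute the image of a general index under each permutation and match them, rather than to manipulate the permutations abstractly.
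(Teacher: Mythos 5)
Your proposal is correct and is exactly the argument the paper has in mind: the paper offers no written proof at all, merely declaring the result ``straightforward,'' and your direct verification --- evaluating both sides of each identity on a decomposable tensor and tracking which block of inputs feeds into $a$, $b$, $c$, including the routine index-matching for the equivariance relation \eqref{equiv} --- is the intended unwinding of the definition of $\circ_i$. Nothing differs in substance; you have simply written out what the paper leaves implicit.
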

The two associativity properties are graphically represented as follows:
\begin{equation*}
\asoperade
\end{equation*}
\subsection{A definition for linear operads}
We are now ready to give the precise definition of an algebraic operad:
\begin{defn}
An operad $\Cal P$ (in the symmetric monoidal category
of $k$-vector spaces) is given by a collection of vector spaces
$(\Cal P_n)_{n\ge 0}$, a right action of the symmetric group
$S_n$ on $\Cal P_n$, a distinguished element $e\in\Cal P_1$, and a collection of {\sl partial compositions\/}:
\begin{align*}
\circ_i:\Cal P_k\otimes\Cal P_l &\longrightarrow \Cal P_{k+l-1},\hskip 8mm
i=1,\ldots ,k\\
(a,b)&\longmapsto a\circ_i b
\end{align*}
subject to the associativity, unit and equivariance axioms of Proposition \ref{endop}.
\end{defn}
The {\sl global
composition\/} is defined by:
\begin{align*}
\gamma:{\Cal P}_n\otimes{\Cal P}_{k_1}\otimes\cdots\otimes{\Cal P}_{k_n}&\longrightarrow
{\Cal P}_{k_1+\cdots +k_n}\\
(a,b_1,\ldots,b_n)&\longmapsto \Big(...\big((a\circ_n
b_n)\circ_{n-1}b_{n-1}\big)\cdots\Big)\circ_1 b_1
\end{align*}
and is graphically represented as follows:
\begin{equation*}
\gcoperade
\end{equation*}
The operad ${\Cal P}$ is \textsl{augmented} if ${\Cal P}_0=\{0\}$ and ${\Cal
  P}_1=k.e$. For any
operad $\Cal P$, a $\Cal P$-algebra structure on the vector space $V$ is a
morphism of operads from $\Cal P$ to $\mop{Endop}(V)$. For any two $\Cal
P$-algebras $V$ and $W$, a morphism of $\Cal P$-algebras is a linear map
$f:V\to W$ such that for any $n\ge 0$ and for any $\gamma\in\Cal P_n$ the
following diagram commutes,
\diagramme{
\xymatrix{V^{\otimes n}
\ar[rr]^{f^{\otimes n}}\ar[d]^{\gamma}	&&W^{\otimes n}\ar[d]^{\gamma}\\
V	\ar[rr]_f
&&W
}
}
\noindent where we have denoted by the same letter $\gamma$ the element of $\Cal P_n$
and its images in $\mop{Endop}(V)_n$ and $\mop{Endop}(W)_n$.\\

Now let $V$ be any $k$-vector space. The \textsl{free
  $\Cal P$-algebra} is a $\Cal P$-algebra $\Cal F_{\Cal P}(V)$ endowed with a
linear map $\iota:V\inj{6}\Cal F_{\Cal P}(V)$ such that for any $\Cal
P$-algebra $A$ and for any linear map $f:V\to A$ there is a unique $\Cal
P$-algebra morphism $\overline f:\Cal F_{\Cal P}(V)\to A$ such that
$f=\iota\circ\overline f$. The free $\Cal P$-algebra $\Cal F_{\Cal P}(V)$ is
unique up to isomorphism, and one can prove that a concrete presentation of it
is
given by:
\begin{equation}\label{free-alg}
\Cal F_{\Cal P}(V)=\bigoplus_{n\ge 0}\Cal P_n\otimes_{S_n}V^{\otimes n},
\end{equation}
the map $\iota$ being obviously defined. When $V$ is of finite dimension $d$,
the corresponding free $\Cal P$-algebra is often called the \textsl{free $\Cal
P$-algebra with $d$ generators}.\\

There are several other equivalent definitions for an operad. For more details about operads, see e.g. \cite{Loday96}, \cite{LV}.
\subsection{A few examples of operads}
\subsubsection{The operad {\sc Assoc}}
This operad governs associative algebras. $\hbox{\sc{Assoc}}_n$ is given by $k[S_n]$ (the algebra of the symmetric group $S_n$) for any $n\ge 0$, whereas $\hbox{\sc{Assoc}}_0:=\{0\}$. The right action of $S_n$ on $\hbox{\sc{Assoc}}_n$ is given by linear extension of right multiplication:
\begin{equation}
\left(\sum_i\lambda_i\sigma_i\right)\sigma:=\sum_i\lambda_i(\sigma_i\sigma).
\end{equation}
Let $\sigma\in\hbox{\sc{Assoc}}_k$ and $\tau\in\hbox{\sc{Assoc}}_l$. The partial compositions are given for any $i=1,\ldots,k$ by:
\begin{equation}
\sigma\circ_i\tau:=\iota_i(\sigma,\tau),
\end{equation}
with the notations of \eqref{equiv}. The reader is invited to check the two associativity axioms, as well as the equivariance axiom which reads:
\begin{equation}
(\sigma\sigma')\circ_i(\tau\tau')=(\sigma\circ_{\sigma'(i)}\tau)(\sigma'\circ_i\tau')
\end{equation}
for any $\sigma,\sigma'\in\hbox{\sc{Assoc}}_k$ and $\tau,\tau'\in\hbox{\sc{Assoc}}_l$. Let us denote by $e_k$ the unit element in the symmetric group $S_k$. We obviously have $e_k\circ_i e_l=e_{k+l-1}$ for any $i=1,\ldots ,k$. In particular,
\begin{equation}\label{assoc1}
e_2\circ_1 e_2=e_2\circ_2 e_2=e_3.
\end{equation}
Now let $V$ be an algebra over the operad $\hbox{\sc{Assoc}}$, and let $\Phi:\hbox{\sc{Assoc}}\to\mop{Endop}(V)$ be the corresponding morphism of operads. Let $\mu:V\otimes V\to V$ be the binary operation $\Phi(e_2)$. In view of \eqref{assoc1} we have:
\begin{equation}\label{assoc2}
\mu\circ_1\mu=\mu\circ_2\mu.
\end{equation}
In other words, $\mu$ is associative. As $e_k$ can be obtained, for any $k\ge 3$, by iteratively composing $k-2$ times the element $e_2$, we see that any element of $\hbox{\sc{Assoc}}_k$ can be obtained from $e_2$, partial compositions, symmetric group actions and linear combinations. As a consequence, any $k$-ary operation on $V$ which is in the image of $\Phi$ can be obtained in terms of the associative product $\mu$, partial compositions, symmetric group actions and linear combinations. Summing up, an algebra over the operad $\hbox{\sc{Assoc}}$ is nothing but an associative algebra. In view of \eqref{free-alg}, the free $\hbox{\sc{Assoc}}$-algebra over a vector space $W$ is the (non-unital) tensor algebra $T^+(W)=\bigoplus_{k\ge 1}W^{\otimes k}$.\\

In the same line of thoughts, the operad governing unital associative algebras is defined similarly, except that the space of $0$-ary operations is $k.e_0$ with $e_k\circ_i e_0=e_{k-1}$ for any $i=1,\ldots ,k$. The unit element $u:k\to V$ of the algebra $V$ is given by $u=\Phi(e_0)$. The free unital algebra over a vector space $W$ is the full tensor algebra $T(W)=\bigoplus_{k\ge 0}W^{\otimes k}$.
\subsubsection{The operad $\hbox{\sc Com}$}
This operad governs commutative associative algebras. $\hbox{\sc{Com}}_n$ is one-dimensional for any $n\ge 1$, given by $k.\overline e_n$ for any $n\ge 0$, whereas $\hbox{\sc{Com}}_0:=\{0\}$. The right action of $S_n$ on $\hbox{\sc{Com}}_n$ is trivial. The partial compositions are defined by:
\begin{equation}
\overline e_k\circ_i \overline e_l=\overline e_{k+l-1}\hbox{ for any }i=1,\ldots ,k.
\end{equation}
The three axioms of an operad are obviously verified. Let $V$ be an algebra over the operad $\hbox{\sc Com}$, and let $\Phi:\hbox{\sc{Com}}\to\mop{Endop}(V)$ be the corresponding morphism of operads. Let $\mu:V\otimes V\to V$ be the binary operation $\Phi(\overline e_2)$. We obviously have:
\begin{equation}
\mu\circ_1\mu=\mu\circ_2\mu,\hskip 6mm \mu=\mu.\tau,
\end{equation}
where $\tau\in S_2$ is the flip. Hence $\mu$ is associative and commutative. Here, any $k$-ary operation in the image of $\Phi$ can be obtained, up to a scalar, by iteratively composing $\overline e_2$ with itself. Hence an algebra over the operad $\hbox{\sc{Com}}$ is nothing but a commutative associative algebra. In view of \eqref{free-alg}, the free $\hbox{\sc{Com}}$-algebra over a vector space $W$ is the (non-unital) symmetric algebra $S^+(W)=\bigoplus_{k\ge 1}S^k(W)$.\\

the operad governing unital commutative associative algebras is defined similarly, except that the space of $0$-ary operations is $k.\overline e_0$ with $\overline e_k\circ_i \overline e_0=\overline e_{k-1}$ for any $i=1,\ldots ,k$. The unit element $u:k\to V$ of the algebra $V$ is given by $u=\Phi(e_0)$. The free unital algebra over a vector space $W$ is the full symmetric algebra $S(W)=\bigoplus_{k\ge 0}S^k(W)$.\\

The map $e_k\to\overline e_k$ is easily seen to define a morphism of operad $\Psi:\hbox{\sc Assoc}\to\hbox{\sc Com}$. Hence any $\hbox{\sc Com}$-algebra is also an $\hbox{\sc Assoc}$-algebra. This expressed the fact that, forgetting commutativity, a commutative associative algebra is also an associative algebra.
\subsubsection{Associative algebras}
Any associative algebra $A$ is some degenerate form of operad: indeed, defining $\Cal P(A)$ by $\Cal P(A)_1:=A$ and $\Cal P(A)_n:=\{0\}$ for $n\not =1$, the collection $\Cal P(A)$ is obviously an operad. An algebra over $\Cal P(A)$ is the same as an $A$-module.\\

This point of view leads to a more conceptual definition of operads: an operad is nothing but an associative unital algebra in the category of "S-objects", i.e. collections of vector spaces $(\Cal P_n)_{n\ge 0}$ with a right action of $S_n$ on $\Cal P_n$. There is a suitable "tensor product" $\boxtimes$ on S-objects, however not symmetric, such that the global composition $\gamma$ and the unit $u:k\to\Cal P_1$ (defined by $u(1)=e$) make the following diagrams commute:
\diagramme{
\xymatrix{
\Cal P\boxtimes \Cal P\boxtimes A \ar[d]^{I\boxtimes \gamma}\ar[r]^{\gamma\boxtimes I}
	&\Cal P\boxtimes \Cal P \ar[d]^\gamma	\\
\Cal P\boxtimes \Cal P \ar[r]^\gamma	& \Cal P,}
}
\diagramme{
\xymatrix{
k\boxtimes \Cal P \ar[r]^{u\boxtimes I} \ar[dr]^\sim	& \Cal P\boxtimes \Cal P \
 \ar[d]_\gamma	&\Cal P\boxtimes k \ar[l]_{I\boxtimes u}\ar[dl]_\sim	\\
&\Cal P.&}
.}
These two diagrams commute if and only if $e$ verifies the unit axiom and the partial compositions verify the two associativity axioms and the equivariance axiom \cite{LV}.

\section{Pre-Lie algebras (continued)}
\subsection{Pre-Lie algebras and augmented operads}
\subsubsection{General construction}
We adopt the notations of Paragraph \ref{operads}. The sum of the partial compositions yields a right pre-Lie algebra structure
on the free $\Cal P$-algebra with one generator, more precisely on $F_{\Cal P}^+:=\bigoplus_{n\ge 2}{\Cal P}_n/S_n$, namely:
\begin{equation}
\overline a\lhd\overline b:=\sum_{i=1}^k \overline{a\circ_i b}.
\end{equation}
Following F. Chapoton \cite{C} one can consider the pro-unipotent
group $G_{\Cal P}^e$ associated with the completion of the pre-Lie algebra
$F_{\Cal P}^+$ for the filtration induced by the grading. More precisely
Chapoton's group $G_{\Cal P}$ is given by the elements $g\in\widehat{F_{\Cal
    P}}$ such that $g_1\not =0$, whereas $G_{\Cal P}^e$ is the subgroup of
$G_{\Cal P}$ formed by elements $g$ such that $g_1=e$.\\

Any element $a\in{\Cal P}_n$ gives rise to an $n$-ary
operation $\omega_a:F_{\Cal P}^{\otimes n}\to F_{\Cal P}$, and for any $x,y_1,\ldots ,y_n\in
F_{\Cal P}^+$ we have\footnote{We thank Muriel Livernet for having brought
  this point to our attention.}\cite{MS}:
\begin{equation}\label{derivation}
\omega_a(y_1,\ldots ,y_n)\lhd x=\sum_{j=1}^n \omega_a(y_1,\ldots,y_j\lhd x,\ldots,y_n).
\end{equation}
\subsubsection{The pre-Lie operad}\label{sect:plo}
Pre-Lie
algebras are algebras over the {\sl pre-Lie operad\/}, which has been
described in detail by F. Chapoton and M. Livernet in \cite{ChaLiv} as
follows: $\Cal P\Cal L_n$ is the vector space of labelled rooted trees, and
partial composition $s\circ_i t$ is given by summing all the possible ways of
inserting the tree $t$ inside the tree $s$ at the vertex labelled by $i$.\\

The free left pre-Lie algebra with one generator is then given by the space
$\Cal T=\bigoplus_{n\ge 1}T_n$ of
rooted trees, as quotienting with the symmetric group actions amounts to
neglect the labels. The pre-Lie operation $(s,t)\mapsto (s\rightarrow t)$ is given by
the sum of the graftings of $s$ on $t$ at all vertices of $t$. As a
consequence of \eqref{derivation} we have two pre-Lie operations on $\Cal
T'=\bigoplus_{n\ge 2}T_n$ which interact as follows \cite{MS}:
\begin{equation}\label{derivation2}
(s\rightarrow t)\lhd u=(s\lhd u)\rightarrow t+s\rightarrow(t\lhd u).
\end{equation}
The first pre-Lie operation $\lhd$ comes from the fact that $\Cal P\Cal L$ is
an augmented operad, whereas the second pre-Lie operation $\rightarrow$ comes from
the fact that $\Cal P\Cal L$ is the pre-Lie operad itself! Similarly:
\begin{thm}\label{fpl}
The free
pre-Lie algebra with $d$ generators is the vector space of rooted trees with
$d$ colours, endowed with grafting.
\end{thm}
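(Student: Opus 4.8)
The plan is to deduce Theorem \ref{fpl} from the general presentation \eqref{free-alg} of a free $\Cal P$-algebra, applied to the pre-Lie operad $\Cal P\Cal L$ of \S\ref{sect:plo} and to the $d$-dimensional space $V=k^d$. Writing $e_1,\ldots,e_d$ for a basis of $V$, formula \eqref{free-alg} gives
$$\Cal F_{\Cal P\Cal L}(k^d)=\bigoplus_{n\ge 1}\Cal P\Cal L_n\otimes_{S_n}(k^d)^{\otimes n},$$
so the first task is to identify the right-hand side, as a graded vector space, with the span of rooted trees whose vertices carry one of $d$ colours, and the second is to check that the pre-Lie product transported from the operad is exactly grafting. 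The one-generator case ($d=1$) is already recorded in \S\ref{sect:plo}, and the whole point is to see that introducing the colours amounts precisely to decorating vertices, the combinatorics of the product being untouched.

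For the vector-space identification I would use that $\Cal P\Cal L_n$ is, by Chapoton--Livernet, the linear span of rooted trees on $n$ labelled vertices, with $S_n$ permuting the labels. As an $S_n$-module it decomposes into permutation modules, one for each isomorphism class of unlabelled rooted tree $t$ on $n$ vertices, the summand attached to $t$ being induced from the stabiliser $\mop{Aut}t$. Taking the tensor product over $S_n$ with $(k^d)^{\otimes n}$ then replaces each summand by the $\mop{Aut}t$-coinvariants of $(k^d)^{\otimes n}$. Since $(k^d)^{\otimes n}$ is the permutation module on the set of colourings $\{1,\ldots,n\}\to\{1,\ldots,d\}$, its $\smop{Aut}t$-coinvariants have a basis indexed by colourings of $t$ up to automorphism; summing over $t$ this is exactly the set of $d$-coloured rooted trees on $n$ vertices. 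Here the embedding $\iota$ sends $e_j$ to the one-vertex tree of colour $j$.

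It then remains to transport the pre-Lie structure. The product $\rightarrow$ on $\Cal F_{\Cal P\Cal L}(k^d)$ is the binary operation attached to the two-vertex generator of $\Cal P\Cal L$, computed through the global composition $\gamma$ and the insertion law $s\circ_i t$ of \S\ref{sect:plo}. I would evaluate this product on two decorated basis elements and pass to the coinvariants, checking term by term that the operadic insertion at the two vertices of the generator collapses, after forgetting labels, to the sum of graftings of one coloured tree onto the vertices of the other, the colours being carried along unchanged. This is the same bookkeeping as in the $d=1$ case of \S\ref{sect:plo}, now performed with the extra colour datum attached to each vertex.

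The step I expect to be the main obstacle is this last compatibility: one must reconcile the operadic partial composition (an \emph{insertion} of a tree at a vertex, which redistributes the incoming edges) with the much simpler \emph{grafting} appearing in the statement, all the while tracking colours through the $S_n$-coinvariant quotient and respecting the automorphisms $\mop{Aut}t$ that prevent the action from being free. Once the transported product is identified with coloured grafting, nothing more is needed: since pre-Lie algebras are precisely $\Cal P\Cal L$-algebras and $\Cal F_{\Cal P\Cal L}(k^d)$ is free by \eqref{free-alg}, the space of $d$-coloured rooted trees with grafting automatically has the required universal property. If one prefers a direct verification, the extension of a linear map $f:k^d\to A$ into a pre-Lie algebra $A$ is built by induction on the number of vertices through $\overline f(s\rightarrow t)=\overline f(s)\rhd\overline f(t)$, the left pre-Lie identity \eqref{prelie} ensuring independence of the order in which the tree is assembled by successive graftings.
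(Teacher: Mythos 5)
Your proposal is correct, but it takes a genuinely different route from the paper's own proof. You deduce Theorem \ref{fpl} from two facts the paper only quotes: the Chapoton--Livernet description of the pre-Lie operad $\Cal P\Cal L$ (labelled rooted trees, partial composition by insertion) and the general presentation \eqref{free-alg} of free $\Cal P$-algebras. The work you actually do is the $S_n$-module bookkeeping --- decomposing $\Cal P\Cal L_n$ into permutation modules induced from $\mop{Aut}t$ and identifying the $\mop{Aut}t$-coinvariants of $(k^d)^{\otimes n}$ with colourings of $t$ up to automorphism --- together with the check that composition with the two-vertex generator collapses to grafting. That last check, which you rightly flag as the main obstacle, does go through painlessly: inserting a tree at the leaf vertex of the generator contributes a single term (that vertex has no incoming edges to redistribute), while inserting at the root redistributes its unique incoming edge over all vertices of the inserted tree, which is exactly the sum of graftings, colours carried along unchanged. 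The paper instead gives a ``pedestrian'' proof in \S\ref{sect:fpl} that avoids operads entirely: it verifies by hand that grafting satisfies the left pre-Lie identity, and constructs the universal morphism $F_a$ by a double induction (on the number of vertices, then on the number of branches), the crux being that the recursive formula \eqref{def-fa} does not depend on the choice of distinguished branch --- precisely where the pre-Lie identity of the target algebra enters. The trade-off is clear: your route is shorter and makes the passage from one generator to $d$ colours transparent, but it outsources the hard combinatorics to \cite{ChaLiv}; the paper's route is self-contained and produces the explicit morphism $F_a$ (the elementary differentials used later for B-series), and the paper itself observes that its argument is essentially the unlabelled version of the Chapoton--Livernet proof, so the two proofs share the same core located in different places. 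One caveat: your closing alternative, defining $\overline f$ by induction through $\overline f(s\rightarrow t)=\overline f(s)\rhd\overline f(t)$, underestimates the difficulty --- a given tree is not of the form $s\rightarrow t$ (grafting yields sums of trees, and a tree admits many such presentations only up to lower-order terms), so that recursion is not well-defined as stated; making it well-defined is exactly the content of the paper's double induction and of formula \eqref{def-fa}.
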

\subsection{A pedestrian approach to free pre-Lie algebra}\label{sect:fpl}
We give in this paragraph a direct proof of Theorem \ref{fpl} without using
operads. It is similar to the proof of the main theorem in \cite{ChaLiv} about the structure of the pre-Lie operad, except that we consider unlabelled trees. We stick to $d=1$ (i.e. one generator), the proof for several
generators beeing completely analogous. Let $\Cal T$ be the vector space
spanned by rooted trees. First of all, the grafting operation
is pre-Lie, because for any trees $s$, $t$ and $u$ in $\Cal T$ the expression:
\begin{equation}
s\to(t\to u)-(s\to t)\to u
\end{equation}
is obtained by summing up all the possibilities of grafting $s$ and $t$ at
some vertex of $u$. As such it is obviously symmetric in $s$ and $t$. Now let
$(A,\rhd)$ be any left pre-lie algebra, and choose any $a\in A$. In order to
prove Theorem \ref{fpl} for one generator, we have to show that there is a
unique pre-Lie algebra morphism $F_a:\Cal T\longrightarrow A$ such that
$F_a(\racine)=a$. We obtain easily for the first trees:
\begin{eqnarray*}
F_a(\racine)&=&a\\
F_a(\arbrea)&=&a\rhd a\\
F_a(\arbreba)&=&(a\rhd a)\rhd a\\
F_a(\arbrebb)&=&a\rhd(a\rhd a)-(a\rhd a)\rhd a.
\end{eqnarray*}
Can we continue like this? We proceed by double induction, firstly on the
number of vertices, secondly on the number of branches, i.e. the valence of
the root. Write any tree $t$ with $n$ vertices as $t=B_+(t_1,\ldots,t_k)$,
where the $t_j$'s are the branches, and where $B_+$ is the operator which
grafts the branches on a common extra root. By the induction hypothesis on
$n$, the images $F_a(t_j)$ are well-defined.\\

Suppose first that $k=1$, i.e. $t=B_+(t_1)=t_1\to\racine$. Then we obviously
have $F_a(t)=F_a(t_1)\rhd a$. Suppose now that $F_a(s)$ is unambiguously
defined for any tree $s$ with $n$ vertices and $k'$ branches with $k'\le
k-1$. The equation:
\begin{eqnarray*}
t&=&B_+(t_1,\ldots,t_k)\\
&=&t_1\to B_+(t_2,\ldots,t_k)-\sum_{j=2}^kB_+(t_2,\ldots,t_{j-1},t_1\to t_j,t_{j+1},\ldots,t_n)
\end{eqnarray*}
shows that, if $F_a(t)$ exists, it is uniquely defined by:
\begin{equation}\label{def-fa}
F_a(t)=F_a(t_1)\rhd F_a\big(B_+(t_2,\ldots,t_k)\big)-\sum_{j=2}^kF_a\big(B_+(t_2,\ldots,t_{j-1},t_1\to t_j,t_{j+1},\ldots,t_n)\big).
\end{equation}
What remains to be shown is that this expression does not depend on the choice
of the distinguished branch $t_1$. In order to see this, choose a second
branch (say $t_2$), and consider the expression:
\begin{equation}
T:=t_1\to\big(t_2\to B_+(t_3,\ldots,t_k)\big)-(t_1\to t_2)\to B_+(t_3,\ldots,t_k),
\end{equation}
which is obtained by grafting $t_1$ and $t_2$ on $B_+(t_3,\ldots,t_k)$. This
expression is the sum of five terms:
\begin{enumerate}
\item $T_1$, obtained by grafting $t_1$ and $t_2$ on the root. It is nothing
  but the tree $t$ itself.
\item $T_2$, obtained by grafting $t_1$ on the root and $t_2$ elsewhere.
\item $T_3$, obtained by grafting $t_2$ on the root and $t_1$ elsewhere.
\item $T_4$, obtained by grafting $t_1$ on some branch and $t_2$ on some other
  branch.
\item $T_5$, obtained by grafting $t_1$ and $t_2$ on the same branch.
\end{enumerate}
The terms $F_a(T_2)+F_a(T_3)$, $F_a(T_4)$ and $F_a(T_5)$ are well-defined by
the induction hypothesis on the number of branches, and obviously symmetric in
$t_1$ and $t_2$. We thus arrive at:
\begin{eqnarray*}
F_a(t)&=&F_a(t_1)\rhd\Big(F_a(t_2)\rhd F_a\big(
B_+(t_3,\ldots,t_k)\big)\Big)-\big(F_a(t_1)\rhd F_a(t_2)\big)\rhd F_a\big(
B_+(t_3,\ldots,t_k)\big)\\
&-&F_a(T_2)-F_a(T_3)-F_a(T_4)-F_a(T_5),
\end{eqnarray*}
which is symmetric in $t_1$ and $t_2$ thanks to the left pre-Lie relation in
$A$. The expression \eqref{def-fa} is then the same if we exchange $t_1$ with
the branch $t_2$ or any other branch, hence it is invariant by any permutation
of the branches $t_1,\ldots ,t_n$. This proves Theorem \ref{fpl} for one generator. The general case is
proven similarly except that we have to replace $a\in A$ by a collection
$\{a_1,\ldots ,a_d\}$.
\subsection{Right-sided commutative Hopf algebras and the Loday-Ronco theorem}\label{sect:lodayronco}
J.-L. Loday and M. Ronco have found a deep link between pre-Lie algebras and
commutative Hopf algebras of a certain type: let $\Cal H$ be a commutative Hopf algebra. Following \cite{LR08}, we say that $\Cal H$ is {\sl right-sided\/} if it is free as a commutative algebra, i.e. $\Cal H=S(V)$ for some $k$-vector space $V$, and if the reduced coproduct verifies~:
\begin{equation}
\wt\Delta(V)\subset\Cal H\otimes V.
\end{equation} 
Suppose moreover that $V=\bigoplus_{n\ge 0}$ is graded with finite-dimensional
homogeneous components. Then the graded dual $A=V^0$ is a left pre-Lie
algebra, and by the Milnor-Moore theorem, the graded dual $\Cal H^0$ is
isomorphic to the enveloping algebra $\Cal U(A_{\smop{Lie}})$ as graded Hopf
algebra. Conversely, for any graded pre-Lie algebra $A$ the graded dual $\Cal
U(A_{\smop{Lie}})^0$ is free commutative right-sided (\cite{LR08} Theorem
5.3).\\

The Hopf algebra $\Cal H_{CK}$ of rooted forests enters into this framework,
and, as it was first explicited in \cite{C01}, the associated pre-Lie algebra is the free pre-Lie algebra of rooted trees
with grafting: to see this, denote by $(\delta_s)$ the dual basis in the graded dual $\Cal H_{CK}^\circ$ of the forest basis of $\Cal H_{CK}$. The correspondence $\delta:s\mapsto \delta_s$ extends linearly to a unique vector space isomorphism from $\Cal H_{CK}$ onto $\Cal H_{CK}^\circ$. For any tree $t$ the corresponding $\delta_t$ is an infinitesimal character of $\Cal H_{CK}$, i.e. it is a primitive element of $\Cal H^\circ$. We denote by $\ast$ the (convolution) product of $\Cal H^\circ$. We have~:
\begin{equation}
\delta_t\ast \delta_u-\delta_u\ast \delta_t=\delta_{t\curvearrowright u-u\curvearrowright t}.
\end{equation}
Here $t\curvearrowright u$ is obtained by grafting $t$ on $u$, namely:
\begin{equation}
t\curvearrowright u=\sum_{v}N'(t,u,v)v,
\end{equation}
where $N'(t,u,v)$ is the number of partitions $V(t)=V\amalg W, W<V$ such that $v\restr{V}=t$ and $v\restr{W}=u$.
Another normalization is often employed: considering the normalized dual basis
$\wt\delta_t=\sigma(t)\delta_t$, where $\sigma(t)=|\mop{Aut}t|$ stands for the
symmetry factor of $t$, we obviously have:
\begin{equation}
\wt\delta_t\ast \wt\delta_u-\wt\delta_u\ast \wt\delta_t=\wt\delta_{t\to u-u\to t},
\end{equation}
where:
\begin{equation}
t\to u=\sum_{v}M'(t,u,v)v,
\end{equation}
where $M'(t,u,v)=\displaystyle\frac{\sigma(t)\sigma(u)}{\sigma(v)}N'(t,u,v)$
can be interpreted as the number of ways to graft the tree $t$ on the tree $u$
in order to get the tree $v$. The operation $\to$ then coincides with the
grafting free pre-Lie operation introduced in Paragraph
\ref{sect:plo}\footnote{The two notations $\to$ and $\curvearrowright$ come from
  \cite{CEM}, but have been exchanged.}.\\

The other pre-Lie operation $\lhd$ of Paragraph \ref{sect:plo}, more precisely
its opposite $\rhd$, is associated to another right-sided Hopf algebra of
forests $\Cal H$ which has been investigated in \cite{CEM} and \cite{MS}, and
which can be defined by considering trees as Feynman diagrams (without loops):
Let $\Cal T'$ be the vector space spanned by
rooted trees with at least one edge. Consider the symmetric algebra $\Cal H =
\Cal S(\Cal T')$, which can be seen as the $k$-vector space generated by rooted
forests with all connected components containing at least one edge. One
identifies the unit of $ \Cal S(\Cal T')$ with the rooted tree $\racine$. A {\sl subforest\/} of a tree $t$ is either the trivial forest $\racine$, or a collection $(t_1,\ldots ,t_n)$ of pairwise disjoint subtrees of $t$, each of them containing at least one edge. In particular two subtrees of a subforest cannot have any common vertex.\\

Let $s$ be a subforest of a rooted tree $t$. Denote by $t / s$ the tree obtained by contracting each connected component of $s$ onto a vertex. We turn $\Cal H $ into a bialgebra by defining a coproduct $\Delta : \Cal H \to \Cal H \otimes  \Cal H$ on each tree $t \in \Cal T'$ by~:
\begin{equation}
\label{newcoprod}
	\Delta(t)=\sum_{s\subseteq t} s \otimes t/s,
\end{equation}
where the sum runs over all possible subforests (including the unit $\racine$
and the full subforest $t$). As usual we extend the coproduct $\Delta$
multiplicatively onto $ \Cal S(\Cal T')$. In fact, co-associativity is easily verified. This makes $\Cal H:=\bigoplus_{n \ge 0}\Cal H_n$ a connected graded bialgebra, hence a Hopf algebra, where the grading is defined in terms of the number of edges. The antipode $S : \Cal H \to \Cal H$ is given (recursively with respect to the number of edges) by one of the two following formulas:
\begin{eqnarray}
	S(t) & = & -t-\sum_{s, \racine \, \not= s\subsetneq t}S(s)\ t/s, \\
	S(t) & = & -t-\sum_{s, \racine \, \not= s\subsetneq t}s\ S(t/s).
	\label{eq-anti-recu}
\end{eqnarray}
It turns out that $\Cal H_{CK}$ is left comodule coalgebra over $\Cal H$
\cite{CEM}, \cite{MS}, in the sense that the following diagram commutes:
\diagramme{
\xymatrix{{\mathcal H}_{CK}\ar[r]^{\Phi}\ar[d]_{\Delta_{CK}}
&{\mathcal H}\otimes{\mathcal H}_{CK}\ar[dd]^{I\otimes\Delta_{CK}}\\
{\mathcal H}_{CK}\otimes{\mathcal H}_{CK}\ar[d]_{\Phi\otimes\Phi}&\\
{\mathcal H}\otimes{\mathcal H}_{CK}\otimes{\mathcal H}\otimes{\mathcal
  H}_{CK}
\ar[r]_{m^{1,3}}&{\mathcal H}\otimes{\mathcal H}_{CK}\otimes{\mathcal H}_{CK}
}
}
Here the coaction $\Phi:\Cal H_{CK}\to \Cal H\otimes\Cal H_{CK}$ is
the algebra morphism 
given by $\Phi(\un)=\bullet\otimes \un$ and $\Phi(t)=\Delta_{\Cal H}(t)$ for
any nonempty tree $t$. As a consequence, the group of characters of $\Cal H$
acts on the group of characters of $\Cal H_{CK}$ by automorphisms.
\subsection{Pre-Lie algebras of vector fields}\label{sect:vf}
\subsubsection{Flat torsion-free connections}
Let $M$ be a differentiable manifold, and let $\nabla$ the covariant
derivation operator associated to a connection on the tangent bundle
$TM$. The covariant derivation is a bilinear operator on vector fields (i.e. two sections of the tangent
bundle): $(X,Y)\mapsto \nabla_XY$ such that the following axioms are fulfilled:
\begin{align*}
\nabla_{fX}Y&=f\nabla_XY,\\
\nabla_X(fY)&=f\nabla_X Y+(X.f)Y\hbox{ (Leibniz rule)}.
\end{align*}
The torsion of the connection $\tau$ is defined by:
\begin{equation}
\tau(X,Y)=\nabla_XY-\nabla_YX-[X,Y],
\end{equation}
and the curvature tensor is defined by:
\begin{equation}
R(X,Y)=[\nabla_X,\nabla_Y]-\nabla_{[X,Y]}.
\end{equation}
The connection is {\sl flat\/} if the curvature $R$ vanishes identically, and
{\sl torsion-free\/} if $\tau=0$. The following crucial observation by Y. Matsushima, as early as 1968 \cite[Lemma 1]{M68} is an
immediate consequence of \eqref{prelie1-2}:
\begin{prop}
For any smooth manifold $M$ endowed with a flat torsion-free connection
$\nabla$, the space $\chi(M)$ of vector fields is a left pre-Lie algebra, with
pre-Lie product given by:
\begin{equation}
X\rhd Y:=\nabla_XY.
\end{equation}
\end{prop}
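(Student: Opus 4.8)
The plan is to verify the left pre-Lie identity \eqref{prelie} by means of its operator reformulation \eqref{prelie1-2}, feeding in the two hypotheses on the connection one at a time. First I would set $L_XY:=\nabla_XY=X\rhd Y$ and recall that, as explained just after \eqref{prelie}, the left pre-Lie identity is equivalent to the single operator identity
$$L_{[X,Y]}=[L_X,L_Y],$$
where the bracket on the left is the pre-Lie commutator $[X,Y]=X\rhd Y-Y\rhd X=\nabla_XY-\nabla_YX$ and the bracket on the right is the commutator of endomorphisms of $\chi(M)$. Since $\nabla_XY$ is $\R$-bilinear in $(X,Y)$ (the Leibniz rule contributes no extra term when the scalar is constant), $\chi(M)$ is a $k$-vector space equipped with a $k$-bilinear product $\rhd$, so everything reduces to establishing this one identity.

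Next I would bring in the torsion-free hypothesis. Because $\tau=0$, the defining formula $\tau(X,Y)=\nabla_XY-\nabla_YX-[X,Y]$ gives $\nabla_XY-\nabla_YX=[X,Y]$, the right-hand side now being the usual Lie bracket of vector fields. Thus the pre-Lie commutator coincides with the Lie bracket, and the left-hand side of the identity to be proved is exactly $\nabla_{[X,Y]}$, with $[X,Y]$ the Lie bracket.

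Then I would invoke flatness. Since $R$ vanishes identically, the defining formula $R(X,Y)=[\nabla_X,\nabla_Y]-\nabla_{[X,Y]}$ yields $[\nabla_X,\nabla_Y]=\nabla_{[X,Y]}$. Combining the two steps,
$$L_{[X,Y]}=\nabla_{[X,Y]}=[\nabla_X,\nabla_Y]=[L_X,L_Y],$$
which is precisely \eqref{prelie1-2}. Hence the left pre-Lie identity holds and $(\chi(M),\rhd)$ is a left pre-Lie algebra.

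The proof is short because each hypothesis is used exactly once and for a distinct purpose: torsion-freeness identifies the pre-Lie commutator with the Lie bracket of vector fields, and flatness converts the commutator of the covariant derivatives into the covariant derivative along that bracket. There is no genuine obstacle; the only point requiring care is the bookkeeping, namely that the symbol $[X,Y]$ stands for three a priori different brackets (the pre-Lie commutator, the Lie bracket of vector fields, and the commutator of operators), and that it is exactly the vanishing of $\tau$ which makes the first two agree. Once this identification is made explicit, the verification of \eqref{prelie1-2} is immediate.
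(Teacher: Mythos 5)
Your proof is correct and follows exactly the route the paper intends: the paper states the proposition as ``an immediate consequence of \eqref{prelie1-2}'', and your argument is precisely that consequence spelled out --- torsion-freeness identifies the pre-Lie commutator $\nabla_XY-\nabla_YX$ with the Lie bracket $[X,Y]$, and flatness gives $[\nabla_X,\nabla_Y]=\nabla_{[X,Y]}$, yielding $L_{[X,Y]}=[L_X,L_Y]$. Your explicit remark that three a priori different brackets are being identified is a useful clarification of what the paper leaves implicit, but it is the same proof.
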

Note that on $M=\R^n$ endowed with its canonical flat torsion-free
connection, the pre-Lie product is given by:
\begin{equation}\label{coord}
(f_i\partial_i)\rhd(g_j\partial_j)=f_i(\partial_ig_j)\partial_j.
\end{equation}
\subsubsection{Relating two pre-Lie structures}
As early as 1857, A. Cayley \cite{Cay} discovered a link between rooted trees
and vector fields on the manifold $\R^n$ endowed with its natural flat torsion
free connection, which can be described in modern terms as follows: let $\T$ be the free pre-lie algebra on the space of vector
fields on $\R^n$. A basis of $\T$ is given by rooted trees with vertices
decorated by some basis of $\chi(\R^n)$. There is a unique pre-Lie algebra
morphism $\Cal Y$, the \textsl{Cayley map}, such that $\Cal Y(\bullet_X)=X$
for any vector field $X$.
\begin{prop}
For any rooted tree $t$, each vertex $v$ being decorated by a vector field
$X_v$, the vector field $\Cal Y(t)$ is given at $x\in\R^n$ by the following
recursive procedure \cite{HWL}: if the decorated tree
$t$ is obtained by grafing all its branches $t_k$ on the root $r$ decorated by the
vector field $X_r=\sum_{i=1}^n f_i\partial_i$, i.e. if it writes
$t=B_+^{X_r}(t_1,\ldots,t_k)$, then:
\begin{align}
\Cal Y(\bullet_{X_r})&=X_r,\\
\Cal Y(t)&=\sum_{i=1}^n \Cal Y_i(t)\partial_i \hbox{ with :}\nonumber\\
\Cal Y_i(t)(x)&=f_i^{(k)}(x)\big(\Cal Y(t_1)(x),\ldots,\Cal Y(t_k)(x)\big),
\end{align}
where $f_i^{(k)}(x)$ stands for the $k^{\smop{th}}$ differential of $f_i$.
\end{prop}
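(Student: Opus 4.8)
The plan is to establish the formula by a double induction that mirrors the bookkeeping of the pedestrian proof of Theorem~\ref{fpl}: an outer induction on the number of vertices of $t$, and, at fixed vertex number, an inner induction on the valence $k$ of the root. The base case is the one-vertex tree $\bullet_{X_r}$, where $\Cal Y(\bullet_{X_r})=X_r=\sum_i f_i\partial_i$ and $f_i^{(0)}=f_i$, so the formula holds. The whole argument runs on the defining property of the Cayley map: $\Cal Y$ is a pre-Lie morphism from $(\T,\to)$ to $\big(\chi(\R^n),\rhd\big)$ with $\rhd=\nabla$, so $\Cal Y(s\to t)=\nabla_{\Cal Y(s)}\Cal Y(t)$ for all $s,t$, and by \eqref{coord} the operator $\nabla_{\Cal Y(s)}$ acts by differentiating the component functions of $\Cal Y(t)$ along $\Cal Y(s)$.

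First I would write $t=B_+^{X_r}(t_1,\ldots,t_k)$ and single out the last branch, setting $\tau:=B_+^{X_r}(t_1,\ldots,t_{k-1})$, a tree with strictly fewer vertices for which the formula is available by the outer hypothesis. Expanding the grafting of $t_k$ onto each vertex of $\tau$ gives $t_k\to\tau=t+\sum_{j=1}^{k-1}B_+^{X_r}(t_1,\ldots,t_k\to t_j,\ldots,t_{k-1})$, where grafting on the root recovers $t$ and the other terms have the same vertex number as $t$ but valence $k-1$. Applying $\Cal Y$ and the morphism property on the left, $\nabla_{\Cal Y(t_k)}\Cal Y(\tau)=\Cal Y(t)+\sum_{j=1}^{k-1}\Cal Y\big(B_+^{X_r}(t_1,\ldots,t_k\to t_j,\ldots,t_{k-1})\big)$. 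Since $t_k\to t_j$ is a linear combination of trees, the inner hypothesis together with the multilinearity of $f_i^{(k-1)}$ evaluates each correction term to $\sum_i f_i^{(k-1)}\big(\Cal Y(t_1),\ldots,\Cal Y(t_k\to t_j),\ldots,\Cal Y(t_{k-1})\big)\partial_i$, with $\Cal Y(t_k\to t_j)=\nabla_{\Cal Y(t_k)}\Cal Y(t_j)$.

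The computational heart --- the step I expect to demand the most care --- is to differentiate the left-hand side directly. Inserting the outer hypothesis $\Cal Y(\tau)=\sum_i f_i^{(k-1)}(\Cal Y(t_1),\ldots,\Cal Y(t_{k-1}))\partial_i$ and applying $\nabla_{\Cal Y(t_k)}$ componentwise, the Leibniz rule applied to the $x$-dependent scalar $f_i^{(k-1)}(x)\big(\Cal Y(t_1)(x),\ldots,\Cal Y(t_{k-1})(x)\big)$ splits the derivative into two kinds of terms: one in which $f_i^{(k-1)}$ itself is differentiated, which by the chain rule promotes it to $f_i^{(k)}$ with $\Cal Y(t_k)$ filling a new argument slot (here one uses that differentiating the $k^{\smop{th}}$ differential inserts the direction as an extra variable, and the symmetry of higher differentials makes the slot immaterial); and, for each $j$, a term in which the vector argument $\Cal Y(t_j)$ is differentiated, yielding $\nabla_{\Cal Y(t_k)}\Cal Y(t_j)=\Cal Y(t_k\to t_j)$. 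Hence the left-hand side equals $\sum_i f_i^{(k)}\big(\Cal Y(t_k),\Cal Y(t_1),\ldots,\Cal Y(t_{k-1})\big)\partial_i$ plus exactly the correction terms already computed on the right.

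To finish I would cancel the two matching correction sums; what remains is $\Cal Y(t)=\sum_i f_i^{(k)}\big(\Cal Y(t_k),\Cal Y(t_1),\ldots,\Cal Y(t_{k-1})\big)\partial_i$, and the symmetry of $f_i^{(k)}$ reorders the arguments to $\big(\Cal Y(t_1),\ldots,\Cal Y(t_k)\big)$, which is the asserted recursion. The only genuine subtlety is the higher-order Leibniz computation of the preceding paragraph; everything else is the bookkeeping ensuring that at each step one of the two induction parameters strictly decreases.
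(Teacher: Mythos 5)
Your proposal is correct and follows essentially the same route as the paper's proof: single out one branch, express $t$ through the grafting identity $t_k\to\tau=t+\sum_j B_+^{X_r}(t_1,\ldots,t_k\to t_j,\ldots,t_{k-1})$, apply the pre-Lie morphism property together with the Leibniz/chain rule to $\nabla_{\Cal Y(t_k)}\Cal Y(\tau)$, and cancel the correction terms. The only differences are cosmetic: the paper pulls off the first branch rather than the last and phrases the argument as a single induction on the number of branches, whereas you organize it as a double induction on vertices and valence.
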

\begin{proof}
From \eqref{coord} we get for any vector field $X$ and any other vector field
$Y=\sum_{j=1}^n g_j\partial_j$:
\begin{equation}
X\rhd\sum_{j=1}^n g_j\partial_j=\sum_j(X.g_j)\partial_j.
\end{equation}
In other words, $X\rhd Y$ is the derivative of $Y$ along the vector field $X$,
where $Y$ is viewed as a $C^{\infty}$ map from $\R^n$ to $\R^n$. We prove the
result by induction on the number $k$ of branches: for $k=1$ we check:
\begin{eqnarray*}
\Cal Y(s\to\bullet_Y)(x)&=&\big(\Cal Y(s)\rhd\Cal Y(\bullet_Y)\big)(x)\\
&=&\big(\Cal Y(s)\rhd Y\big)(x)\\
&=&\sum_{j=1}^n\big(\Cal Y(s).g_j\big)(x)\partial_j\\
&=&\sum_{j=1}^ng'_j(x)\big(\Cal Y(s)(x)\big)\partial_j.
\end{eqnarray*}
Now we can compute, using the Leibniz rule and the induction hypothesis (we
drop the point $x\in\R^n$ where the vector fields are evaluated):
\begin{eqnarray*}
\Cal Y\big(B_+^Y(t_1,\ldots,t_k)\big)&=& \Cal Y\big(t_1\to B_+^Y(t_2,\ldots,t_k)\big)
-\sum_{r=2}^k \Cal Y\big(B_+^Y(t_2,\ldots,t_{r-1},t_1\to
t_r,t_{r+1},\ldots,t_k)\big)\\
&\hskip -80mm=& \hskip-44mm\Cal Y(t_1)\rhd\sum_{j=1}^n f_i^{(k-1)}\big(\Cal Y(t_2),\ldots,\Cal
Y(t_k)\big)\partial_j-\sum_{r=2}^k\sum_{j=1}^n f_i^{(k-1)}\big(\Cal Y(t_2),\ldots,\Cal
Y(t_{r-1}),\Cal Y(t_1)\rhd \Cal Y(t_r),\Cal
Y(t_{r-1}),\ldots,\Cal Y(t_k)\big)\partial_j\\
&\hskip -80mm=& \hskip-44mm \sum_{j=1}^nf_i^{(k)}\big(\Cal Y(t_1),\ldots,\Cal
Y(t_k)\big)\partial_j.
\end{eqnarray*}
\end{proof}
\begin{cor}[closed formula]\label{cf}
For any rooted tree $t$ with set of vertices $\Cal V(t)$ and root $r$, each vertex $v$ being decorated by a vector field
$X_v=\sum_{i=1}^nX_v^i\partial_i$, the vector field $\Cal Y(t)$ is given at $x\in\R^n$ by the following formula:
\begin{equation}
\Cal Y(t)(x)=\sum_{F:\Cal V(t)\to\{1,\ldots ,n\}}\left(\prod_{v\in\Cal V(t)-\{r\}}
\partial_{I(v)}(X_v^{F(v)})(x)\right)\partial_{I(r)}(X_r^{F(r)})(x)\partial_{F(r)},
\end{equation}
with the shorthand notation:
\begin{equation}
\partial_{I(v)}:=\prod_{w\to v}\partial_{F(w)},
\end{equation}
where the product runs over the incoming vertices of $v$.
\end{cor}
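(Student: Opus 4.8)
The plan is to prove the closed formula by induction on the number of vertices of $t$, using the recursive description of $\Cal Y$ furnished by the preceding proposition. For the base case $t=\bullet_{X_r}$ the set $\Cal V(t)=\{r\}$ has no non-root vertices, so the product $\prod_{v\in\Cal V(t)-\{r\}}$ is empty and $I(r)=\emptyset$ makes $\partial_{I(r)}$ the identity; the formula then reads $\sum_{i=1}^n X_r^i(x)\,\partial_i=X_r(x)$, which is exactly $\Cal Y(\bullet_{X_r})$. For the inductive step I would write $t=B_+^{X_r}(t_1,\ldots,t_k)$ and denote by $v_1,\ldots,v_k$ the roots of the branches $t_1,\ldots,t_k$; these are precisely the incoming vertices of $r$, so that $I(r)=\{v_1,\ldots,v_k\}$ and $\partial_{I(r)}=\partial_{F(v_1)}\cdots\partial_{F(v_k)}$. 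I then apply the recursive formula $\Cal Y_i(t)=f_i^{(k)}\big(\Cal Y(t_1),\ldots,\Cal Y(t_k)\big)$ with $f_i=X_r^i$.

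The first key step is to expand the $k$-th differential of the component $f_i=X_r^i$ in coordinates, since it is a symmetric $k$-linear form evaluated on the branch vector fields:
$$f_i^{(k)}(x)\big(\Cal Y(t_1),\ldots,\Cal Y(t_k)\big)=\sum_{m_1,\ldots,m_k=1}^n(\partial_{m_1}\cdots\partial_{m_k}X_r^i)(x)\prod_{j=1}^k\Cal Y_{m_j}(t_j)(x).$$
The indices $m_1,\ldots,m_k$ are nothing but the output directions of the branches, and they are destined to become the labels $F(v_1),\ldots,F(v_k)$, while $\partial_{m_1}\cdots\partial_{m_k}X_r^i=\partial_{I(r)}(X_r^{F(r)})$ once we set $F(r)=i$.

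Next I would substitute the induction hypothesis for each scalar component $\Cal Y_{m_j}(t_j)(x)$, i.e. the closed formula applied to the smaller tree $t_j$ with its root $v_j$ constrained to the direction $m_j$. The crucial structural observation here is that the incoming vertices of any $v\in\Cal V(t_j)$, including those of the branch root $v_j$, are the same whether they are read inside $t_j$ or inside $t$; hence each operator $\partial_{I(v)}$ is unambiguous, and the factor $\partial_{I(v_j)}(X_{v_j}^{m_j})$ that plays the role of the ``root factor'' in the subtree is exactly the non-root factor attached to $v_j$ in $t$. After this substitution the expression becomes a sum over the data $(i,m_1,\ldots,m_k,F_1,\ldots,F_k)$, where $F_j$ labels $\Cal V(t_j)$ with $F_j(v_j)=m_j$. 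Since $\Cal V(t)=\{r\}\sqcup\Cal V(t_1)\sqcup\cdots\sqcup\Cal V(t_k)$, this is precisely the data of a single map $F:\Cal V(t)\to\{1,\ldots,n\}$ with $F(r)=i$ and $F\restr{\Cal V(t_j)}=F_j$.

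Finally, collecting terms under this reindexing, the root contributes $\partial_{I(r)}(X_r^{F(r)})(x)$ together with the basis vector $\partial_{F(r)}$, while the branches contribute $\prod_{j=1}^k\prod_{v\in\Cal V(t_j)}\partial_{I(v)}(X_v^{F(v)})(x)=\prod_{v\in\Cal V(t)-\{r\}}\partial_{I(v)}(X_v^{F(v)})(x)$, which is exactly the asserted formula. I expect the only genuine obstacle to be the bookkeeping: making the collapse of the nested multi-sum over $(i,m_1,\ldots,m_k,F_1,\ldots,F_k)$ into a single sum over $F$ fully rigorous, and checking carefully that $\partial_{I(v)}$ is insensitive to whether $v$ is viewed inside a branch or inside the whole tree. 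Everything else is a direct unwinding of the recursive proposition.
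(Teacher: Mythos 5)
Your proof is correct: the coordinate expansion of the $k$-th differential, the identification $I(r)=\{v_1,\ldots,v_k\}$ with $\partial_{I(r)}=\partial_{F(v_1)}\cdots\partial_{F(v_k)}$, the bijective reindexing of the data $(i,m_1,\ldots,m_k,F_1,\ldots,F_k)$ as a single map $F:\Cal V(t)\to\{1,\ldots,n\}$ via $\Cal V(t)=\{r\}\sqcup\Cal V(t_1)\sqcup\cdots\sqcup\Cal V(t_k)$, and the observation that $\partial_{I(v)}$ is unchanged whether $v$ is read inside a branch $t_j$ or inside $t$ are all sound, and together they close the induction. The paper gives no proof of this corollary at all --- it is stated as an immediate consequence of the preceding recursive proposition --- so your induction on the number of vertices is exactly the ``direct unwinding'' that the paper leaves implicit.
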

Now fix a vector field $X$
on $\R^n$ and consider the map $d_X$ from
rooted trees to vector field-decorated rooted
trees, which decorates each vertex by $X$. It is obviously a pre-Lie algebra
morphism, and $F_X:=\Cal Y\circ d_X$ is the unique pre-Lie algebra morphism
which sends the one-vertex tree $\bullet$ to the vector field $X$.
\subsection{B-series, composition and substitution}
B-series have been defined by E. Hairer and G. Wanner, following the pioneering work of J. Butcher \cite{B63} on Runge-Kutta methods for the numerical resolution of differential equations. Remarkably enough, rooted trees revealed to be an adequate tool not only for vector fields, but also for the numerical approximation of their integral curves. J. Butcher discovered that the Runge-Kutta methods form a group (since then called the Butcher group), which is nothing but the character group of the Connes-Kreimer Hopf algebra $\Cal H_{CK}$ \cite{Br}.\\

Consider any left pre-Lie algebra $(A,\rhd)$, introduce a fictitious unit
$\un$ such that $\un\rhd a=a\rhd\un=a$ for any $a\in A$, and consider
for any $a\in A$ the unique left pre-Lie algebra morphism $F_a:(
\Cal T,\to)\longrightarrow (A,\rhd)$ such that $F_a(\bullet)=a$.
A {\sl $B$-series\/} is an element of $hA[[h]]\oplus k.\un$ defined by:
\begin{equation}\label{def:B-series}
B(\alpha;a):=\alpha(\emptyset)\un+\sum_{s\in T}h^{v(s)}\frac{\alpha(s)}{\sigma(s)}F_a(s),
\end{equation}
where $\alpha$ is any linear form on $\Cal T\oplus k\emptyset$ (here $\sigma(s)$ is the symmetry factor of the tree, i.e. the order of its group of automorphisms). It matches the usual notion of $B$-series \cite{HWL} when $A$ is the pre-Lie
algebra of vector fields on $\R^n$ (it is also convenient to set 
$F_a(\emptyset)=\un$). In this case, the vector fields $F_a(t)$ for a tree $t$ are
differentiable maps from $\R^n$ to $\R^n$ called {\sl
  elementary differentials\/}. $B$-series can be composed coefficientwise, as
series in the indeterminate $h$ whose coefficients are maps from $\R^n$ to
$\R^n$. The same definition with trees decorated by a
set of colours $\Cal D$ leads to straightforward generalizations. For example
$P$-series used in partitioned Runge-Kutta methods \cite{HWL} correspond to
bi-coloured trees.\\

A slightly different way of defining $B$-series is the following: consider the
unique pre-Lie algebra morphism $\Cal F_a:\Cal T\to hA[h]$ such that $\Cal
F_a(\bullet)=ha$. It respects the gradings given by the number of vertices
and the powers of $h$ respectively, hence it extends to $\Cal F_a:\widehat
{\Cal T}\to hA[[h]]$, where $\widehat {\Cal T}$ is the completion of $\Cal T$ with respect to the grading. We further extend it to the empty tree by setting $\Cal F_a(\emptyset)=\un$. We have then:
\begin{equation}\label{Bseriesbis}
B(\alpha;a)=\Cal F_a\circ \widetilde\delta^{-1}(\alpha),
\end{equation}
where $\widetilde\delta$ is the isomorphism from $\widehat {\Cal T}\oplus k\emptyset$
to $(\Cal T\oplus k\emptyset)^*$ given by the normalized dual basis (see
Paragraph \ref{sect:lodayronco}).\\

We restrict ourselves to $B$-series $B(\alpha;a)$ with $\alpha(\emptyset)=1$. Such $\alpha$'s are in one-to-one
correspondence with
characters of the algebra of forests (which is the underlying algebra of
$\Cal H_{CK}$) by setting:
\begin{equation}
\alpha(t_1\cdots t_k):=\alpha(t_1)\cdots\alpha(t_k).
\end{equation}
The Hairer-Wanner theorem \cite[Theorem III.1.10]{HWL} says that composition
of $B$-series corresponds to the convolution product of characters of $\Cal
H_{CK}$, namely:
\begin{equation}\label{hw-theorem}
B(\beta;a)\circ B(\alpha;a)=B(\alpha * \beta,a),
\end{equation}
where linear forms $\alpha,\beta$ on $\Cal T\oplus k\emptyset$ and their character counterparts are
identified modulo the above correspondence.\\

Let us now turn to substitution, after \cite{CHV}. The idea is to replace the
vector field $a$ in a $B$-series $B(\beta;a)$ by another vector field $\widetilde a$
which expresses itself as a $B$-series, i.e. $\widetilde a=h^{-1}B(\alpha;a)$ where
$\alpha$ is now a linear form on $\Cal T\oplus k\emptyset$ such that
$\alpha(\emptyset)=0$. We suppose here moreover that $\alpha(\bullet)=1$. Such
$\alpha$'s are in one-to-one correspondence with characters of $\widetilde {\Cal
  H}$. The following proposition is proved in \cite{CEM}:
\begin{prop}\label{substitution}
For any linear forms $\alpha,\beta$ on $\Cal T$ with $\alpha(\bullet=1)$, we have:
\begin{equation}
B\big(\beta;\frac 1h B(\alpha;a)\big)=B(\alpha\star\beta;a),
\end{equation}
where $\alpha$ is multiplicatively extended to
forests, $\beta$ is seen as an infinitesimal character of $\Cal H_{CK}$ and
where $\star$ is the dualization of the left coaction $\Phi$ of $\Cal H$ on
$\Cal H_{CK}$ defined in Paragraph \ref{sect:lodayronco}.
\end{prop}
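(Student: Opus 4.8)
The plan is to reduce to the universal case and then match coefficients. For any pre-Lie algebra $(A,\rhd)$ and any $a\in A$ the map $F_a$ is the unique pre-Lie morphism $(\Cal T,\to)\to(A,\rhd)$ sending $\bullet$ to $a$, and substitution is natural in $a$: if $\psi:A\to B$ is a filtration-preserving pre-Lie morphism then $F_{\psi(a)}=\psi\circ F_a$ by uniqueness, so $\psi\big(B(\beta;\tilde a)\big)=B(\beta;\widetilde{\psi(a)})$. It therefore suffices to prove the identity in the free pre-Lie algebra itself, i.e. for $A=\Cal T$ and $a=\bullet$, where $F_\bullet=\mop{Id}$ and the elementary differentials $\{F_a(t)=t\}_t$ form a basis of $\widehat{\Cal T}$, making coefficientwise comparison legitimate; the general statement then follows by applying $F_a$. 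In this setting, by \eqref{Bseriesbis} everything reduces to reexpressing the elementary differential map $\Cal F_{\tilde a}$ of the substituted field $\tilde a=h^{-1}B(\alpha;a)$ through $\Cal F_a$. On the right-hand side, since $\star$ is by definition the transpose of the coaction $\Phi$, one has $(\alpha\star\beta)(t)=(\alpha\otimes\beta)\big(\Phi(t)\big)=\sum_{s\subseteq t}\alpha(s)\,\beta(t/s)$, with $\alpha$ multiplicatively extended to $\Cal H$; hence the coefficient of $t$ in $B(\alpha\star\beta;a)$ is $h^{v(t)}\sigma(t)^{-1}\sum_{s\subseteq t}\alpha(s)\beta(t/s)$.

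The central step I would establish is the substitution formula for elementary differentials: for every tree $u$,
$$\Cal F_{\tilde a}(u)=\sum_{t,\ s\subseteq t,\ t/s=u}c(t,s)\,\alpha(s)\,h^{v(t)-v(u)}\,\Cal F_a(t),$$
where $s$ ranges over the subforests of $t$ having exactly one connected component over each vertex of $u$, $\alpha$ is extended multiplicatively, and $c(t,s)$ is a purely combinatorial insertion multiplicity. The underlying idea is that decorating each vertex of $u$ by the series $\tilde a=\sum_s h^{v(s)-1}\sigma(s)^{-1}\alpha(s)F_a(s)$ and forming the elementary differential amounts to inserting one tree at every vertex, the enlarged tree $t$ being precisely one that contracts back to $u$, so that the bookkeeping is governed by $\Phi(t)=\sum_{s\subseteq t}s\otimes t/s$. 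I would prove this by induction on the number of vertices of $u$: writing $u=B_+(u_1,\dots,u_k)$, applying the recursive formula \eqref{def-fa} for $F$, and using the compatibility \eqref{derivation2} of the grafting product $\to$ with the operadic product $\lhd$ to control how a newly grafted branch redistributes over the inserted subtrees.

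Substituting this formula into $B(\beta;\tilde a)=\sum_u h^{v(u)}\sigma(u)^{-1}\beta(u)\,\Cal F_{\tilde a}(u)$ and reorganizing the double sum by the enlarged tree $t$ turns the coefficient of $t$ into $h^{v(t)}\sum_{s\subseteq t}(\text{normalization})\,\alpha(s)\beta(t/s)$, which I then compare with the expression above. The hard part will be the symmetry-factor bookkeeping: one must show that the insertion multiplicity $c(t,s)$ together with the factors $\sigma(u)$ and $\sigma(s_i)$ conspire to produce exactly $\sigma(t)^{-1}$, so that the transpose-of-$\Phi$ normalization is reproduced. This is the very same mechanism that converts the bare grafting numbers $N'(t,u,v)$ into the normalized numbers $M'(t,u,v)$ in Paragraph \ref{sect:lodayronco}, and it is where all the genuine content resides; the associativity of $\star$ and its compatibility with composition of $B$-series are already guaranteed by the comodule-coalgebra diagram recalled there.
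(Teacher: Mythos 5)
Note first that the paper offers no proof to compare against: Proposition \ref{substitution} is stated with the remark that it ``is proved in \cite{CEM}'', so your argument has to stand entirely on its own. Its skeleton is sound: the reduction to the universal case $A=\Cal T$, $a=\bullet$ (where $F_\bullet(t)=t$, the trees are linearly independent, so coefficientwise comparison is legitimate, and the general case follows by applying the $h$-linearly extended morphism $F_a$ by the uniqueness/naturality argument you give) is a correct first move, and $(\alpha\star\beta)(t)=\sum_{s\subseteq t}\alpha(s)\,\beta(t/s)$ is the correct unfolding of ``$\star$ is the dualization of $\Phi$''.

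However, the proof has a genuine gap, and you locate it yourself: the ``central step'' is stated with an undefined insertion multiplicity $c(t,s)$, and the assertion that $c(t,s)$, $\sigma(u)$ and the $\sigma(s_v)$ conspire to produce exactly $\sigma(t)^{-1}$ --- which you admit is ``where all the genuine content resides'' --- is never proved. That assertion is not a lemma feeding into the proposition; it \emph{is} the proposition, rewritten in coefficient form, so deferring it leaves nothing proved. The tools you invoke for the induction are also not adequate as stated: \eqref{def-fa} governs grafting of branches, and \eqref{derivation2} governs insertion of a \emph{single} tree (the product $\lhd$ inserts one argument, summed over all vertices), whereas the multilinear expansion of $F_{\widetilde a}(u)$ requires inserting a tree simultaneously at \emph{every} vertex of $u$ and identifying the resulting multiplicities with the contraction coefficients dual to $\Phi$; making \eqref{derivation2} do that job is precisely the combinatorial work carried out in \cite{CEM} (see also \cite{CHV}), not a routine induction. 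Two further points are skipped. First, the hypothesis $\alpha(\bullet)=1$ is exactly what permits the passage from spanning subforests of $t$ (what the expansion naturally produces: one component over each vertex of $u$) to the sum over \emph{all} subforests occurring in $\Phi(t)$, single-vertex components contributing the factor $\alpha(\bullet)=1$; you never say where this hypothesis enters. Second, your displayed formula double-counts powers of $h$: since $\Cal F_a(t)=h^{v(t)}F_a(t)$ already carries the full $h$-weight, the factor $h^{v(t)-v(u)}$ should accompany $F_a(t)$, not $\Cal F_a(t)$ --- a small slip, but a warning sign in an argument whose entire difficulty is bookkeeping.
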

The condition $\alpha(\bullet)=1$ is in fact dropped in \cite[Proposition 15]{CEM}: the price to pay is that one has to replace
the Hopf algebra $\Cal H$ by a non-connected bialgebra $\widetilde{\Cal
  H}=S(\Cal T)$ with a suitable coproduct,
such that $\Cal H$ is obtained as the quotient $\widetilde{\Cal H}/\Cal J$,
where $\Cal J$ is the ideal generated by $\bullet-\un$. The substitution product $\star$
then coincides with the one considered in \cite{CHV} via natural
identifications.
\section{Other related algebraic structures}
\subsection{NAP algebras}
NAP algebras (NAP for Non-Associative Permutative) appear under this name in
\cite{L}, and under the name ``left- (right-)commutative algebras'' in
\cite{DL}. They can be seen in some sense as a ``simplified version'' of
pre-Lie algebras.
\subsubsection{Definition and general properties}
A \textsl{left NAP algebra} over a field $k$ is a $k$-vector space $A$ with a
bilinear binary composition $\RHD$ that satisfies the left NAP identity:
\begin{equation}
a\RHD(b\RHD c)=b\RHD(a\RHD c).
\end{equation} 
for any $a,b,c\in A$. Analogously, a \textsl{right NAP algebra} is a
$k$-vector space $A$ with a binary composition $\LHD$ satisfying the right NAP
identity:
\begin{equation}
(a\LHD b)\LHD c=(a\LHD c)\LHD b. 
\end{equation}
 As any right NAP algebra is also a left NAP algebra with product $a\RHD
 b:=b\LHD a$, one can stick to left NAP algebras, what we shall do unless
 specifically indicated.
\subsubsection{Free NAP algebras}
The \textsl{left Butcher product} $s\circ t$ of two rooted trees $s$ and $t$
is defined by grafting $s$ on the root of $t$. For example:
\begin{equation}
\racine\circ\arbrea=\arbrebb,\hskip 12mm \arbrea\circ\arbrea=\arbrecc.
\end{equation}

The following theorem is due to A. Dzhumadil'daev and C. L\"ofwall \cite{DL},
see also \cite{L} for an operadic approach:
\begin{thm}
The free NAP algebra with $d$ generators is the vector space spanned by rooted
trees with $d$ colours, endowed with the left Butcher product.
\end{thm}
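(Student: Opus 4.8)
The plan is to follow the pedestrian strategy used for Theorem \ref{fpl}. Write $\Cal T_d$ for the vector space spanned by rooted trees whose vertices are coloured by $\{1,\ldots,d\}$, and denote by $\bullet_c$ the one-vertex tree of colour $c$. First I would check that the left Butcher product $\circ$ is genuinely left NAP: since $t\circ u$ grafts $t$ onto the root of $u$, the tree $s\circ(t\circ u)$ is obtained from $u$ by grafting both $s$ and $t$ onto its root, and this is visibly symmetric in $s$ and $t$, so $s\circ(t\circ u)=t\circ(s\circ u)$. Thus $(\Cal T_d,\circ)$ is a left NAP algebra, and it remains to establish the universal property: for any left NAP algebra $(A,\RHD)$ and any $a_1,\ldots,a_d\in A$ there is a unique NAP morphism $F:\Cal T_d\to A$ with $F(\bullet_c)=a_c$.

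The key structural observation is that every tree decomposes as $t=B_+^c(t_1,\ldots,t_k)=t_1\circ\bigl(t_2\circ(\cdots(t_k\circ\bullet_c)\cdots)\bigr)$, where $c$ is the colour of the root and $t_1,\ldots,t_k$ are its branches; indeed grafting $t_1$ onto the root of $B_+^c(t_2,\ldots,t_k)$ reproduces $B_+^c(t_1,\ldots,t_k)$. This suggests defining $F$ by induction on the number of vertices through
\begin{equation*}
F(t):=F(t_1)\RHD\bigl(F(t_2)\RHD(\cdots(F(t_k)\RHD a_c)\cdots)\bigr),
\end{equation*}
with $F(\bullet_c)=a_c$ as base case (the empty list $k=0$). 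Any NAP morphism is forced to satisfy this relation by the morphism property applied to the above decomposition, so uniqueness is immediate once $F$ is shown to be well defined; moreover the morphism property $F(s\circ t)=F(s)\RHD F(t)$ then holds by construction, taking $s$ as the first branch of $s\circ t$.

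The main obstacle, exactly as in the pre-Lie case, is to show that the right-hand side above does not depend on the chosen ordering of the branches $t_1,\ldots,t_k$. Here the left NAP identity does all the work: applied to the nested expression it allows one to interchange any two \emph{adjacent} factors, since $x_i\RHD(x_{i+1}\RHD R)=x_{i+1}\RHD(x_i\RHD R)$, where $R$ denotes the remaining inner expression (or $a_c$ itself in the innermost case). As adjacent transpositions generate the symmetric group $S_k$, the value of $F(t)$ is invariant under every permutation of the branches, hence well defined. This is the single place where the NAP axiom is used, and it is precisely the simplification over the free pre-Lie proof, where the full left pre-Lie relation was needed to reconcile two orderings through the five correction terms $T_1,\ldots,T_5$. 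With well-definedness in hand, the verifications of the previous paragraph complete the argument; the case of general $d$ requires no new idea beyond carrying the root colour $c$ through the recursion.
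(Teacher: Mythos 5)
Your proposal is correct and follows essentially the same route as the paper's proof: the same decomposition $t=t_1\circ\bigl(t_2\circ\cdots(t_k\circ\bullet_c)\cdots\bigr)$, the same recursive definition of the morphism by induction on the number of vertices, and the same use of the left NAP identity to establish invariance under permutation of the branches (which gives both well-definedness and uniqueness). The only cosmetic difference is that you realize every adjacent transposition directly by applying the NAP identity at the appropriate depth of the nested expression, whereas the paper swaps only the first two branches by NAP and handles permutations of the remaining branches via its induction hypothesis; both arguments generate all of $S_k$ and are equally valid.
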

\begin{proof}
We give the proof for one generator, the case of $d$ generators being entirely
similar. The left NAP property for the Butcher product is obvious. Let
$(A,\RHD)$ be any left NAP algebra, and let $a\in A$. We have to prove that
there exists a unique left NAP algebra morphism $G_a$ from $(\Cal T,\circ)$ to
$(A,\RHD)$ such that $G_a(\racine)=a$. As in the pre-Lie case, we proceed by
double induction, first on the number $n$ of vertices, second on the number
$k$ of branches. In the case $k=1$ the tree $t$ writes
$B_+(t_1)=t_1\circ\racine$, hence $G_a(t)=G_a(s)\RHD a$ is the only possible
choice. Now a tree with $k$ branches writes:
\begin{equation}
t=B_+(t_1,\ldots,t_k)=t_1\circ\big(t_2\circ\cdots\circ(t_k\circ\racine)...\big).
\end{equation}
The only possible choice is then:
\begin{equation}
G_a(t)=G_a(t_1)\RHD\Big(G_a(t_2)\RHD\cdots\RHD\big(G_a(t_k)\RHD a\big)...\Big),
\end{equation}
and the result is clearly symmetric in $t_1$ and $t_2$ due to the left NAP
identity in $A$. Using the induction hypothesis the result is also invariant
under permutation of the branches $2,3,\ldots,k$. Hence it is invariant under
the permutation of all branches, which proves the theorem.
\end{proof}
Despite the similarity with the pre-Lie situation described in Paragraph
\ref{sect:fpl}, the NAP framework is much simpler due to the set-theoretic
nature of the Butcher product: for any trees $s$ and $t$, the Butcher product $s\circ t$ is a tree
whereas the grafting $s\to t$ is a sum of trees. We obtain for the first
trees:
\begin{eqnarray*}
G_a(\racine)&=&a\\
G_a(\arbrea)&=&a\RHD a\\
G_a(\arbreba)&=&(a\RHD a)\RHD a\\
G_a(\arbrebb)&=&a\RHD(a\RHD a).
\end{eqnarray*}
\subsubsection{NAP algebras of vector fields}
We consider the flat affine $n$-dimensional space $E_n$ although it is possible, through parallel
transport, to consider any smooth manifold endowed with a flat torsion-free
connection. Fix an origin in $E_n$, which will be denoted by $O$. For vector fields $X=\sum_{i=1}^nf_i\partial_i$ and
$Y=\sum_{j=1}^ng_j\partial_j$ we set:
\begin{equation}
X\RHD_{O}Y=\sum_{j=1}^n(X_O.g_j)\partial_j,
\end{equation}
where $X_O:=\sum_{i=0}^n f_i(O)\partial_i$ is the constant vector field
obtained by freezing the coefficients of $X$ at $x=O$.
\begin{prop}
The space $\chi(\R^n)$ of vector fields endowed with product $\RHD_{O}$ is a
left NAP algebra. Moreover, for any other choice of origin $O'\in E_n$, the
conjugation with the translation of vector $\overrightarrow{OO'}$ is an isomorphism from
$\big(\chi(\R^n),\RHD_O)\big)$ onto $\big(\chi(\R^n),\RHD_{O'})\big)$.
\end{prop}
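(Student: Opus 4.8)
The plan is to verify the left NAP identity by a direct computation that exploits the key feature of the definition: because the coefficients of $X$ and $Y$ are \emph{frozen} at $O$, the operators $X_O$ and $Y_O$ are constant-coefficient first-order differential operators, and such operators commute. Writing $X=\sum_i f_i\partial_i$, $Y=\sum_j g_j\partial_j$ and $Z=\sum_k h_k\partial_k$, I would first compute $Y\RHD_O Z=\sum_k(Y_O.h_k)\partial_k$, whose $k$-th component is $\sum_j g_j(O)\partial_j h_k$. Feeding this into $X\RHD_O(-)$ produces the $k$-th component
$$X_O.(Y_O.h_k)=\sum_{i,j}f_i(O)g_j(O)\,\partial_i\partial_j h_k,$$
where the scalars $f_i(O)$ and $g_j(O)$ pass through the derivation precisely because they are constants. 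Swapping the roles of $X$ and $Y$ gives $\sum_{i,j}g_j(O)f_i(O)\,\partial_j\partial_i h_k$, which is the same quantity thanks to the equality of mixed partials $\partial_i\partial_j=\partial_j\partial_i$. Hence $X\RHD_O(Y\RHD_O Z)=Y\RHD_O(X\RHD_O Z)$, which is the left NAP identity. This is exactly where the NAP story is simpler than the pre-Lie one: without freezing, one would (illegitimately) also have to differentiate the coefficients, producing extra terms and recovering only the weaker pre-Lie relation.

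For the second assertion, let $v=\overrightarrow{OO'}$, let $T_v$ be the translation $x\mapsto x+v$, and take $\Psi:=(T_v)_*$, the pushforward of vector fields, which is the conjugation map in question. Since a translation has trivial Jacobian, $\Psi$ acts simply by precomposing components with $T_{-v}$: if $X=\sum_i f_i\partial_i$ then $\Psi(X)=\sum_i(f_i\circ T_{-v})\partial_i$. This is manifestly a linear bijection, with inverse $(T_{-v})_*$, so it remains only to check that it intertwines the two products, namely $\Psi(X\RHD_O Y)=\Psi(X)\RHD_{O'}\Psi(Y)$. I would use two facts: first, $\partial_i$ commutes with precomposition by a translation, so $(\partial_i g_j)\circ T_{-v}=\partial_i(g_j\circ T_{-v})$; second, the freezing point is transported correctly, since $\Psi(X)_{O'}=\sum_i f_i(T_{-v}(O'))\partial_i=\sum_i f_i(O)\partial_i=X_O$ because $T_{-v}(O')=O'-v=O$. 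Combining these, the $j$-th component of $\Psi(X)\RHD_{O'}\Psi(Y)$ equals $X_O.(g_j\circ T_{-v})=\sum_i f_i(O)\partial_i(g_j\circ T_{-v})$, which coincides term by term with the $j$-th component of $\Psi(X\RHD_O Y)$, proving that $\Psi$ is an isomorphism of NAP algebras.

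Both computations are routine, so there is no genuine obstacle; the only thing requiring attention is bookkeeping. In the first part the essential observation is that $f_i(O)$ and $g_j(O)$ are scalars and therefore commute with the differential operators, which reduces the identity to the symmetry of second derivatives. In the second part the one delicate point is to match the freezing point: the pushforward $\Psi$ evaluates the coefficients of $\Psi(X)$ at $O'$, and the argument hinges entirely on $T_{-v}(O')=O$, so that this evaluation reproduces $X_O$. Once these two observations are in place, both halves of the proposition follow at once.
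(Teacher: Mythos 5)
Your proof of the NAP identity is exactly the paper's argument: the paper simply notes that $X\RHD_O(Y\RHD_O Z)=\sum_k X_O.(Y_O.h_k)\partial_k$ is symmetric in $X$ and $Y$ because the constant vector fields $X_O$ and $Y_O$ commute, which is precisely your mixed-partials computation spelled out. For the second assertion the paper says only ``left as an exercise for the reader,'' and your solution is correct and complete: identifying the conjugation with the pushforward $(T_v)_*$, using the trivial Jacobian of a translation, and observing that the freezing point transports correctly via $T_{-v}(O')=O$ is exactly the intended argument, so nothing is missing.
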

\begin{proof}
Let $X=\sum_{i=1}^nf_i\partial_i$, $Y=\sum_{j=1}^ng_j\partial_j$ and
$Z=\sum_{k=1}^nh_k\partial_k$ be three vector fields. Then:
\begin{equation}
X\RHD_O(Y\RHD_O Z)=\sum_{k=1}^nX_O.(Y_O.h_k)\partial_k
\end{equation}
is symmetric in $X$ and $Y$, due to the fact that the two constant vector
fields $X_O$ and $Y_O$ commute. The second assertion is left as an exercice
for the reader.
\end{proof}
With the notations of Paragraph \ref{sect:vf} there is a unique NAP algebra
morphism
\begin{equation}
\Cal Y_O:(\T,\circ)\longrightarrow\big(\chi(\R^n),\RHD_{O}\big),
\end{equation}
the
\textsl{frozen Cayley map}, such that $\Cal Y_O(\bullet_X)=X$. Considering
also the unique NAP algebra morphism $G_{X,O}=\Cal Y_O\circ d_X:(\Cal T,\circ)\to
\big(\chi(\R^n),\RHD_{O}\big)$, the maps $G_{X,O}(t):\R^n\to\R^n$ are called the
\textsl{frozen elementary differentials}.
\begin{prop}
For any rooted tree $t$, each vertex $v$ being decorated by a vector field
$X_v$, the vector field $\Cal Y_O(t)$ is given at $x\in\R^n$ by the following
recursive procedure: if the decorated tree
$t$ is obtained by grafing all its branches $t_k$ on the root $r$ decorated by the
vector field $X_r=\sum_{i=1}^n f_i\partial_i$, i.e. if it writes
$t=B_+^{X_r}(t_1,\ldots,t_k)$, then:
\begin{align}
\Cal Y_O(\bullet_{X_r})&=X_r,\\
\Cal Y_O(t)&=\sum_{i=1}^n \Cal Y_{O,i}(t)\partial_i \hbox{ with :}\nonumber\\
\Cal Y_{O,i}(t)(x)&=f_i^{(k)}(x)\big(\Cal Y_O(t_1)(O),\ldots,\Cal Y_O(t_k)(O)\big),
\end{align}
where $f_i^{(k)}(x)$ stands for the $k^{\smop{th}}$ differential of $f_i$
evaluated at $x$.
\end{prop}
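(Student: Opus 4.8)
The plan is to exploit that $\Cal Y_O$ is by definition the unique NAP-algebra morphism from $(\T,\circ)$ to $\big(\chi(\R^n),\RHD_O\big)$ sending $\bullet_X$ to $X$, and to reduce the statement to an explicit computation of a single iterated $\RHD_O$-product. In the NAP setting the decoration-preserving left Butcher product gives the \emph{set-theoretic} identity
\begin{equation*}
t=B_+^{X_r}(t_1,\ldots,t_k)=t_1\circ\big(t_2\circ\cdots\circ(t_k\circ\bullet_{X_r})\cdots\big),
\end{equation*}
with no correction terms, in contrast with the pre-Lie case of Paragraph \ref{sect:vf} where grafting is a sum of trees. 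Applying the morphism $\Cal Y_O$ and writing $Y_j:=\Cal Y_O(t_j)$ therefore yields
\begin{equation*}
\Cal Y_O(t)=Y_1\RHD_O\big(Y_2\RHD_O\cdots\RHD_O(Y_k\RHD_O X_r)\cdots\big).
\end{equation*}

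First I would prove, by induction on $m=0,1,\ldots,k$, the intermediate claim that the $j$-th component of the inner product $W_m:=Y_{k-m+1}\RHD_O(\cdots\RHD_O(Y_k\RHD_O X_r)\cdots)$ equals $f_j^{(m)}(x)\big(Y_{k-m+1}(O),\ldots,Y_k(O)\big)$, with $W_0=X_r$ so that $W_0^j=f_j$. The case $m=1$ is immediate from the defining formula $X\RHD_O Y=\sum_j (X_O.g_j)\partial_j$: freezing $Y_k$ at $O$ replaces it by the constant field $(Y_k)_O=\sum_i Y_k^i(O)\partial_i$, whence $W_1^j(x)=\sum_i Y_k^i(O)(\partial_i f_j)(x)=f_j'(x)\big(Y_k(O)\big)$.

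For the inductive step I would again use $Y\RHD_O Z=\sum_j\big((Y)_O.Z^j\big)\partial_j$ together with the induction hypothesis on $W_{m-1}$. The decisive observation is that the arguments $Y_{k-m+2}(O),\ldots,Y_k(O)$ inside $f_j^{(m-1)}(x)(\cdots)$ are \emph{constants} in $x$; hence differentiating along $\partial_i$ acts solely on the $x$-dependence of the differential and promotes it to $f_j^{(m)}(x)$ with one additional slot occupied by $e_i$, producing no Leibniz cross-terms. Contracting against the frozen field $(Y_{k-m+1})_O$ then fills this slot with $Y_{k-m+1}(O)$ by multilinearity, giving
\begin{equation*}
W_m^j(x)=f_j^{(m)}(x)\big(Y_{k-m+1}(O),\ldots,Y_k(O)\big).
\end{equation*}
Taking $m=k$ is exactly the asserted formula for $\Cal Y_{O,j}(t)(x)$, while $\Cal Y_O(\bullet_{X_r})=X_r$ is just the definition of $\Cal Y_O$ on one-vertex trees.

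The computation is genuinely easier than its pre-Lie counterpart: there the branch fields are evaluated at the running point $x$ and contribute extra Leibniz terms, which is why that proof needed the full pre-Lie recursion for $B_+$. Here the freezing at $O$ turns those directions into constants, so the only point requiring care is the clean bookkeeping of the iterated differentiation above. As a free consistency check, the symmetry of the higher differentials $f_j^{(k)}(x)$ makes the final expression symmetric in $Y_1(O),\ldots,Y_k(O)$, in agreement with the fact that $B_+$ is symmetric in its branches for NAP trees.
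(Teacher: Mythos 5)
Your proof is correct and follows essentially the same route as the paper: the paper also proceeds by induction on the number of branches, peeling off one branch at a time via $B_+^{X_r}(t_1,\ldots,t_k)=t_1\circ B_+^{X_r}(t_2,\ldots,t_k)$ and using precisely your key observation that the frozen arguments $\Cal Y_O(t_j)(O)$ are constant, so that applying $\RHD_O$ promotes $f_i^{(k-1)}$ to $f_i^{(k)}$ with no Leibniz cross-terms. Your only (cosmetic) reorganization is to unfold the tree completely into an iterated Butcher product first and then run the induction on the iterated $\RHD_O$-product, rather than interleaving the two as the paper does.
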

\begin{proof}
We prove the
result by induction on the number $k$ of branches: for $k=1$ we check:
\begin{eqnarray*}
\Cal Y_O(s\circ\bullet_Y)(x)&=&\big(\Cal Y_O(s)\RHD_O\Cal Y_O(\bullet_Y)\big)(x)\\
&=&\big(\Cal Y_O(s)\RHD_O Y\big)(x)\\
&=&\sum_{j=1}^n\big(\Cal Y_O(s)_O.g_j\big)(x)\partial_j\\
&=&\sum_{j=1}^ng'_j(x)\big(\Cal Y_O(s)(O)\big)\partial_j.
\end{eqnarray*}
Now we can compute, using the induction hypothesis and the fact that the
vector fields $\Cal Y_O(t_j)(O)$ are constant:
\begin{eqnarray*}
\Cal Y_O\big(B_+^Y(t_1,\ldots,t_k)\big)(x)&=& \Cal Y_O\big(t_1\circ B_+^Y(t_2,\ldots,t_k)\big)(x)
\\
&=&\Big(\Cal Y_O(t_1)\RHD_O\sum_{j=1}^n f_i^{(k-1)}\big(\Cal Y_O(t_2)(O),\ldots,\Cal
Y_O(t_k)(O)\big)\partial_j\Big)(x)\\
&=&\sum_{j=1}^nf_i^{(k)}(x)\big(\Cal Y_O(t_1)(O),\ldots,\Cal
Y_O(t_k)(O)\big)\partial_j.
\end{eqnarray*}
\end{proof}
\begin{cor}[closed formula]\label{cf-nap}
With the notations of Corollary \ref{cf}, for any rooted tree $t$ with set of vertices $\Cal V(t)$ and root $r$, each vertex $v$ being decorated by a vector field
$X_v=\sum_{i=1}^nX_v^i\partial_i$, the vector field $\Cal Y_O(t)$ is given at $x\in\R^n$ by the following formula:
\begin{equation}
\Cal Y_O(t)(x)=\sum_{F:\Cal V(t)\to\{1,\ldots ,n\}}\left(\prod_{v\in\Cal V(t)-\{r\}}
\partial_{I(v)}(X_v^{F(v)})(O)\right)\partial_{I(r)}(X_r^{F(r)})(x)\partial_{F(r)}.
\end{equation}
\end{cor}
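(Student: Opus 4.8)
The plan is to prove the closed formula by induction on the number of vertices of $t$, using the recursive description of $\Cal Y_O$ furnished by the preceding Proposition. This runs in exact parallel with the way one would establish the pre-Lie formula of Corollary \ref{cf}; the single point of departure is that the branches are now evaluated at the frozen origin $O$ rather than at the running point $x$, and tracking this evaluation point is the entire substance of the argument.

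For the base case $t=\bullet_{X_r}$ the set $\Cal V(t)-\{r\}$ is empty, the operator $\partial_{I(r)}$ reduces to the identity since the root has no incoming vertices, and the formula collapses to $\sum_{i=1}^n X_r^i(x)\partial_i=X_r$, which matches the value $\Cal Y_O(\bullet_{X_r})=X_r$ prescribed by the Proposition. For the inductive step I would write $t=B_+^{X_r}(t_1,\ldots,t_k)$ with $X_r=\sum_i f_i\partial_i$, and apply the recursion of the Proposition to obtain $\Cal Y_O(t)(x)=\sum_{i=1}^n f_i^{(k)}(x)\big(\Cal Y_O(t_1)(O),\ldots,\Cal Y_O(t_k)(O)\big)\partial_i$. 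The crucial observation is that each branch value $\Cal Y_O(t_j)(O)$ is the induction hypothesis for $t_j$ evaluated at the single point $O$: this freezes not only the interior vertices but also the branch root $r_j$, so the root factor $\partial_{I(r_j)}(X_{r_j}^{F_j(r_j)})(x)$ is now read at $O$ and merges with the interior product, giving $\Cal Y_O(t_j)(O)=\sum_{F_j}\big(\prod_{v\in\Cal V(t_j)}\partial_{I(v)}(X_v^{F_j(v)})(O)\big)\partial_{F_j(r_j)}$, a product running over \emph{all} vertices of $t_j$.

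It then remains to reassemble these pieces. Expanding the $k$-th differential $f_i^{(k)}(x)$ against the component expansions of the branch values produces, for each branch $t_j$, an index $m_j:=F_j(r_j)$ together with the factor $\partial_{m_1}\cdots\partial_{m_k}(f_i)(x)$. Since the incoming vertices of the root $r$ in $t$ are exactly the branch roots $r_1,\ldots,r_k$, the grafting gives $\partial_{m_1}\cdots\partial_{m_k}=\partial_{I(r)}$, so upon setting $F(r)=i$ this factor is precisely $\partial_{I(r)}(X_r^{F(r)})(x)$. Merging the independent sums over the $F_j$ and the choice of $i$ into a single map $F:\Cal V(t)\to\{1,\ldots,n\}$, and using the decomposition $\Cal V(t)-\{r\}=\Cal V(t_1)\amalg\cdots\amalg\Cal V(t_k)$ together with the fact that the incoming sets $I(v)$ are left unchanged by the grafting for every $v\not= r$, turns the double product into $\prod_{v\in\Cal V(t)-\{r\}}\partial_{I(v)}(X_v^{F(v)})(O)$. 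Collecting the $\partial_{F(r)}$ over $i$ then yields exactly the asserted formula.

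The argument involves no real obstacle beyond this combinatorial bookkeeping, which amounts to the correct identification of the incoming-vertex sets under grafting and the reindexing of the branch sums. The only subtlety worth emphasizing is that, in contrast with Corollary \ref{cf}, the frozen product $\RHD_O$ forces each branch to be read at $O$; this is precisely what makes every non-root vertex be evaluated at $O$ while the root alone retains its dependence on $x$.
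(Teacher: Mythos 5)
Your proof is correct and follows exactly the route the paper intends: the paper states Corollary \ref{cf-nap} without further argument as an immediate consequence of the preceding recursive Proposition, and your induction on the number of vertices (expanding $f_i^{(k)}(x)$ against the branch values $\Cal Y_O(t_j)(O)$, noting that evaluation at $O$ freezes the branch roots, and reindexing the sums over the colourings $F_j$ into a single $F$) is precisely that implicit argument written out in full. The bookkeeping of the incoming-vertex sets under grafting and the special role of the root's evaluation point are handled correctly.
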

\subsection{Novikov algebras}
A Novikov algebra is a right pre-Lie algebra which is also left NAP, namely a
vector space $A$ together with a bilinear product $*$ such that for any
$a,b,c\in A$ we have:
\begin{eqnarray}
a*(b*c)-(a*b)*c&=&a*(c*b)-(a*c)*b,\\
a*(b*c)&=&b*(a*c).
\end{eqnarray}
Novikov algebras first appeared in hydrodynamical equations
\cite{BN85,O92}. The prototype is a commutative associative algebra together
with a derivation $D$, the Novikov product being given by:
\begin{equation}
a*b:=(Da)b.
\end{equation}
The free Novikov algebra on a set of generators has been given in
\cite[Section 7]{DL} in terms of some classes of rooted trees.
\subsection{Assosymmetric algebras}
An assosymmetric algebra is a vector space endowed with a bilinear operation which is both left and right pre-Lie, which means that the associator $a*(b*c)-(a*b)*c$ is symmetric under the permutation group $S_3$. This notion has been introduced by E. Kleinfeld as early as 1957 \cite{Kl} (see also \cite{HJP}). All associative algebras are obviously assosymmetric, but the converse is not true.
\subsection{Dendriform algebras}
A {\sl dendriform algebra\/}~\cite{Loday01} over the field $k$ is a $k$-vector space $A$ endowed with two bilinear operations, denoted $\prec$ and $\succ$ and called right and left products, respectively, subject to the three axioms below:
 \allowdisplaybreaks{
\begin{eqnarray}
  (a\prec b)\prec c  &=& a\prec(b \prec c + b \succ c)        \label{A1}\\
  (a\succ b)\prec c  &=& a\succ(b\prec c) 				 \label{A2}\\
   a\succ(b\succ c)  &=& (a \prec b + a \succ b)\succ c        \label{A3}.
\end{eqnarray}}
One readily verifies that these relations yield associativity for the product 
\begin{equation}
\label{dendassoc}
	a * b := a \prec b + a \succ b.  
\end{equation}	
However, at the same time the dendriform relations imply that the bilinear product
$\rhd$ defined by:
\begin{equation}
\label{def:prelie}
    a \rhd b:= a\succ b-b\prec a,
\end{equation}
is left pre-Lie. The associative operation $*$ and the pre-Lie operation $\rhd$ define the same Lie bracket, and this is of course still true for the opposite (right) pre-Lie product $\lhd$:
\begin{equation*}
    [\![a,b]\!]:=a*b-b*a=a\rhd b-b\rhd a=a\lhd b-b\lhd a.
\end{equation*}
In the commutative case (commutative dendriform algebras
are also named {\sl Zinbiel algebras\/} \cite{Loday95}, \cite{Loday01}), the
left and right operations are further required to identify, so that $a \succ b
= b \prec a$. In this case both pre-Lie products vanish. A natural example of
a commutative dendriform algebra is given by the shuffle algebra in terms of
half-shuffles \cite{Sch58}. Any associative algebra $A$ equipped with a linear
integral-like map $I: A \to A$ satisfying the integration by parts rule also
gives a dendriform algebra, when $a \prec b:=aI(b)$ and $a \succ
b:=I(a)b$. The left pre-Lie product is then given by $a\rhd b=[I(a),b]$. It is
worth mentioning that Zinbiel algebras are also NAP algebras, as shown by the
computation below (dating back to \cite{Sch58}):
\begin{eqnarray*}
a\succ(b\succ c)&=&(a\succ b+a\prec b)\succ c\\
&=&(a\succ b+b\succ a)\succ c\\
&=&b\succ(a\succ c).
\end{eqnarray*}
There also exists a twisted versions of dendriform algebras, encompassing
operators like Jackson integral $I_q$ \cite{EM09}. Returning to ordinary dendriform algebras, observe that:
\begin{equation}
	a*b + b \rhd a = a \succ b + b \succ a.
\end{equation}
This identity generalizes to any number of elements, expressing the
symmetrization of $\Big(...\big((a_1\succ a_2)\succ a_3\big)\cdots\Big)\succ
a_n$ in terms of the associative product and the left pre-Lie product
\cite{EMP07}. For more on dendriform algebras and the associated pre-Lie
structures, see \cite{EM, EMdendeq,EM09,EMP07} and K. Ebrahimi-Fard's note in the present volume.

\end{document}